\tikzset{->-/.style={decoration={markings,mark=at position #1 with {\arrow{>}}},postaction={decorate}}}
\definecolor{qqwuqq}{rgb}{0,0.39215686274509803,0}
\definecolor{qqqqff}{rgb}{0,0,1}
\definecolor{uuuuuu}{rgb}{0.26666666666666666,0.26666666666666666,0.26666666666666666}
\definecolor{xdxdff}{rgb}{0.49019607843137253,0.49019607843137253,1}
\definecolor{ududff}{rgb}{0.30196078431372547,0.30196078431372547,1}
\definecolor{zzttqq}{rgb}{0.6,0.2,0}
\definecolor{qqwuqq}{rgb}{0,0.39215686274509803,0}
\definecolor{qqqqff}{rgb}{0,0,1}
\definecolor{uuuuuu}{rgb}{0.26666666666666666,0.26666666666666666,0.26666666666666666}
\definecolor{xdxdff}{rgb}{0.49019607843137253,0.49019607843137253,1}
\definecolor{ududff}{rgb}{0.30196078431372547,0.30196078431372547,1}
   \def\MR#1{}
\theoremstyle{plain}
\newtheorem{Theorem}{Theorem}[section]
\newtheorem{Lemma}[Theorem]{Lemma}
\newtheorem{Corollary}[Theorem]{Corollary}
\newtheorem{Proposition}[Theorem]{Proposition}
\newtheorem{Observation}[Theorem]{Observation}
\theoremstyle{definition}
\newtheorem{Discussion}[Theorem]{Discussion}
\newtheorem{Assumptions and Discussion}[Theorem]{Assumptions and Discussion}
\newtheorem{Example}[Theorem]{Example}
\newtheorem{Definition}[Theorem]{Definition}
\newtheorem{Question}[Theorem]{Question}
\newtheorem{Remark}[Theorem]{Remark}
\theoremstyle{remark}
\newtheorem*{acknowledgment*}{Acknowledgment}
\def\bar#1{\overline{#1}}
\def\codim{\operatorname{codim}}
\def\deg{\operatorname{deg}}
\def\dim{\operatorname{dim}}
\def\gens{\operatorname{gens}}
\def\ini{\operatorname{in}} 
\def\ker{\operatorname{ker}}
\def\KK{{\mathbb K}}
\def\link{\operatorname{link}}
\def\lk{\operatorname{link}} 
\def\onto{\twoheadrightarrow}
\def\Sym{\operatorname{Sym}}
\def\ZZ{{\mathbb Z}}
\newcommand\bda{{\bm a}}
\newcommand\bdb{{\bm b}}
\newcommand\bdu{\bm{u}}
\newcommand\bdV{\bm{V}}
\newcommand\bdv{\bm{v}}
\newcommand\bdw{{\bm w}}
\newcommand\bdX{{\bm X}}
\newcommand\bfI{\mathbf{I}}
\newcommand\bfT{\mathbf{T}}
\newcommand\calA{\mathcal{A}}
\newcommand\calB{\mathcal{B}}
\newcommand\calC{\mathcal{C}}
\newcommand\calD{\mathcal{D}}
\newcommand\calE{\mathcal{E}}
\newcommand\calF{\mathcal{F}}
\newcommand\calG{\mathcal{G}}
\newcommand\calH{\mathcal{H}}
\newcommand\calI{\mathcal{I}}
\newcommand\calR{\mathcal{R}}
\newcommand\calS{\mathcal{S}}
\newcommand\calV{\mathcal{V}}
\newcommand\calZ{\mathcal{Z}}
\newcommand\frakb{\mathfrak{b}}
\newcommand\frakm{\mathfrak{m}}
\newcommand\frakp{\mathfrak{p}}
\newcommand\Spec{\operatorname{Spec}}
\newcommand{\Phan}{\operatorname{Phan}}
\begin{document}
\baselineskip=16pt

\title[Koszul blowup algebras]{Koszul blowup algebras associated to three-dimensional Ferrers diagrams}

\author[Kuei-Nuan Lin, Yi-Huang Shen]{Kuei-Nuan Lin and Yi-Huang Shen}

\thanks{AMS 2010 {\em Mathematics Subject Classification}.
Primary 13F55, 13P10, 14M25, 16S37; Secondary 14M05, 14N10, 05E40, 05E45}

\thanks{Keyword: Special Fiber, Toric Rings, Blowup Algebras, Ferrers
  Graph, Koszul 
}

\address{Department of Mathematics, The Pennsylvania State University, Greater Allegheny, McKeesport, PA 15132}
\email{kul20@psu.edu}

\address{Key Laboratory of Wu Wen-Tsun Mathematics, Chinese Academy of Sciences, School of Mathematical Sciences, University of Science and Technology of China, Hefei, Anhui, 230026, P.R.~China}
\email{yhshen@ustc.edu.cn}



\begin{abstract}
    We investigate the Rees algebra and the toric ring of the squarefree monomial ideal associated to the three-dimensional Ferrers diagram. Under the projection property condition, we describe explicitly the presentation ideals of the Rees algebra and the toric ring. We show that the toric ring is a Koszul Cohen--Macaulay normal domain, while the Rees algebra is Koszul and the defining ideal is of fiber type. 
\end{abstract}

\maketitle
\section{Introduction}

Given a graded ideal $I$ in a standard graded ring $R$ over a field $\mathbb{K}$, one encounters the Rees algebra $\mathcal{R}(I)=R[It]$ of $I$, as well as the special fiber ring {$\mathcal{F}(I)=\mathcal{R}[I] \otimes_{R} \mathbb{K}$.}  These objects are important to commutative algebraists and geometers because the projective schemes of these rings define the blowup and the special fiber of the blowup of the scheme $\Spec(R)$ along $V(I)$ respectively.  The most challenging question of this topic is to describe those objects in term of generators and relationships, i.e., to find the presentation equations of these objects over some polynomial rings. When the ideal $I$ is generated by forms of the same degree, these rings describe the image and the graph of the rational map between the projective spaces.  The presentation equations of these algebras give implicit equations of the graph and of the variety parametrized by the map.  Finding those presentation equations is known as the implicitization problem \cite{MR2172855}.  When $I$ is a monomial ideal generated by the same degree in a polynomial over a field $\mathbb{K}$, the special fiber ring $\mathcal{F}(I)$ is the toric ring induced by $I$. The presentation ideal of $\mathcal{F}(I)$ is a prime binomial ideal, hence is a toric ideal by \cite[Proposition 1.1.11]{MR2810322}.  Toric ideals play an important role in polyhedral geometry, algebraic topology, algebraic geometry and statistics.  As pointed out in \cite{MR2039975}, even though it is known that the toric ideal is generated by binomials,  ``there are no simple formulas for a finite set of generators of a general toric ideal''.  And it is an active research area to understand and find the toric ideals; see for example, \cite{MR2433002}, \cite{MR2457403}, \cite{MR1705794}, \cite{MR3144398} and \cite{MR2492476}.

Finding the presentation ideal of $\mathcal{R}[I]$ when $I$ is a monomial ideal is another active field; see for example, \cite{arXiv:1205.3127} and \cite{MR1335312}.  Once we have the presentation ideal of $\mathcal{R}[I]$, we can obtain the presentation ideal of $\mathcal{F}(I)$ for free, simply because $\mathcal{R}[I]\otimes_{R}\mathbb{K=}\mathcal{F}(I)$. Of course, the reverse process is generally complicated.  

Ideals of \emph{fiber type} was introduced in \cite{MR2195995} for investigating Rees algebras. If an ideal $I$ is of fiber type, then the presentation ideal of $\calR[I]$ can be obtained from the combination of linear relations from the first syzygy of $I$, with the presentation ideal of $\calF(I)$; see also Definition \ref{fiber-type-def}.  There is no doubt that with respect to the Rees algebra $\calR(I)$, an ideal of fiber type provides the next best possibility if $I$ is not of linear type, i.e., $I$ is not defined by linear relations.  Since an explicit description of the presentation ideal of $\calR(I)$ is in general much involved and difficult, if $I$ is of fiber type, then the focus of investigation of $\calR(I)$ can be shifted to that of $\calF(I)$. This is also the strategy employed in this paper. Finding ideals of fiber type is another active field; see for example, \cite{MR2195995} and \cite{MR3697146}. 

Recall that if $I$ is a graded ideal in a polynomial ring $S$ over the field $\KK$, the quotient algebra $R=S/I$ is called \emph{Koszul} if the (in general infinite) minimal free resolution of $\KK$ over $R$ is linear.  For instance, Avramov and Eisenbud \cite{MR1195407} showed that if this $R$ is Koszul, then every graded finitely generated $R$-module has finite regularity over $R$.  Koszul property is probably the best possibility when one encounters infinite free resolutions. This even makes the Koszul property a basic notion of the representation theory in the non-commutative case.  As pointed out in \cite[Problem 34.6 and Question 74.1]{MR2560561}, what classes of toric rings are Koszul is an open and attractive question; see also \cite{MR3155951}. A related question is when projective toric varieties are defined by quadratics; see for example, \cite{MR2563140}, \cite{MR3003932} and \cite{MR3069289}. People are also interested in finding Cohen--Macaulay or normal toric rings.  See \cite{arXiv:1310.2496} for more on the Koszul algebras and \cite{MR1492542} for more on the toric variety and toric ideal. 

The main purpose of this work is to answer above open questions with respect to
the three-dimensional Ferrers diagram.  As pointed out in the work of Corso and
Nagel \cite{MR2457403}, ``Ferrers graph/tableaux have a prominent place in the
literature as they have been studied in relation to chromatic polynomials,
Schubert varieties, hypergeometric series, permutation statistics, quantum
mechanical operators, and inverse rook problems''; see \cite{MR2457403} for
detailed reference. It is known that the toric ring and the Rees
algebra associated to a (two-dimensional) Ferrers diagram are Cohen--Macaulay
normal domains; see \cite{MR1283294} or \cite{MR2457403}.  More
recently, the work of Corso, Nagel, Petrovi\'c, and Yuen \cite{CNPY} extends
the results to specialized Ferrers diagram and shows that the toric ring is a
Koszul Cohen-Macaulay normal domain. 

Interestingly, the special fiber rings of Ferrers diagrams can also be deemed
as the affine semigroup rings generated by some two-dimensional squarefree
monomials. In particular, they are isomorphic to the toric rings of incidence matrices of graphs. This kind of rings were well-studied by Hibi and Ohsugi. From this point of view, the special fiber rings of Ferrers diagrams are isomorphic to the toric rings of bipartite graph whose cycles of length $\ge 6$ has a chord. Consequently, the associated toric ideals admits a squarefree initial ideal by \cite[Theorem]{MR1657721}.

Since both papers \cite{MR2457403} and \cite{CNPY} involve monomial ideals
generated in degree two, it is natural to inquire about the degree three case.
As a result, we consider the three-dimensional Ferrers diagram and the monomial
ideal associated to it. Notice that the toric ideals of toric rings generated
by squarefree monomials of degree $\le 3$ are as complicated as any arbitrary toric ideals by \cite[Theorem 3.2]{arXiv:1711.04354}.

In some sense, the model we have here can be regarded as sub-configurations of
the 3-fold Segre product. From this point of view, a common strategy is a quest
for the existence of related algebra retracts. With that, properties like
normality of domains, regularity, complete intersection, Koszul,
Stanley-Reisner can descend along algebra retracts. However, this is not known
for properties like Cohen--Macaulay, Gorenstein in general.

Indeed, no such an algebra retract exists in general.  Unlike the
two-dimensional case, not every three-dimensional Ferrers diagram induces a
Koszul special fiber ring; see \Cref{MinExam}. This bad phenomenon happens due
to the recurrence of high-dimensional entanglements. Roughly speaking, in the
two-dimensional case, it is arguably easy to find extremal cells of the
diagram.  After deleting these extremal cells, one still gets a nice diagram of
similar configuration. On the other hand, in the three-dimensional case, unless
we have a cubic diagram, one can almost always expect an extremal cell of one
side being hampered by other sides of the diagram. The special cubic case is
essentially of matroidal type, hence has been investigated; see
\Cref{CubicExam}.

To circumvent the high-dimensional entanglements, we introduce the ``projection
property'' condition. Three-dimensional Ferrers diagrams with this condition
can be thought of as natural generalizations of two-dimensional Ferrers
diagrams.  See \Cref{why-shadow} for a heuristic explanation of this
introduction, as well as \Cref{why-shadow-grobner} for its usage.  Under the
projection property condition, we demonstrate that the toric ideal is generated
by quadratics.  Indeed, it has a quadratic Gr\"obner basis and the toric ring
is a Koszul Cohen--Macaulay normal domain.  We find out that the Ferrers ideal
satisfies the $\ell$-exchange property in the sense of \cite{MR2195995}. Hence,
the presentation ideal of the Rees algebra also have a quadratic Gr\"obner
basis and the Rees algebra is Koszul as well.  Moreover, the ideal is of fiber
type.

Here is the outline of this work. We start by setting the notations and definitions in Section 2. The main object is the generalized $2$-minors, $\bfI_{2}(\mathcal{D})$, that we propose as the generators of the toric ideal associated to the three-dimensional Ferrers diagram $\mathcal{D}$ (see \Cref{2-minors}). It is well-known that once the presentation ideal of an algebra has a quadratic Gr\"obner basis, the algebra is Koszul (see, for instance, \cite{MR2850142}). We show the set $\bfI_2(\calD)$ has a quadratic Gr\"obner basis if it comes from a Ferrers diagram satisfying the projection property (see \Cref{shadow} and \Cref{I2lq}). Not only that, we extend the quadratic Gr\"obner basis property to certain subdiagrams that we need for the later sections (see \Cref{I2-inheritance}).  Since the quadratic Gr\"obner basis of $\bfI_{2}(\mathcal{D})$ has a squarefree initial ideal, we can pass from the initial ideal to its Stanley-Reisner complex in Section 3. We demonstrate that the associated Stanley--Reisner complex is pure vertex-decomposable and hence is shellable. From this, we obtain the Cohen--Macaulayness of the ideal $\bfI_{2}(\mathcal{D})$ (see \Cref{VD-InductionOrder}). In Section 5, we show that the ideal $\bfI_{2}(\mathcal{D})$ is prime, by using the Cohen--Macaulay property of this ideal (see \Cref{I2prime}). Its proof is inspired by the work of Corso, Nagel, Petrovi\'c, and Yuen \cite{CNPY}. Finally, we put all pieces together to show that $\bfI_{2}(\mathcal{D})$ gives rise to the presentation ideal of the toric ideal, and the toric ring, namely the special fiber ring, is a Koszul Cohen--Macaulay normal domain (see \Cref{ToricIsKoszul}). With the $\ell$-exchange property of the ideal, we obtain the presentation ideals of the Rees algebra as well (see \Cref{L-exchange-def} and \Cref{ReesIdeal}). 

\section{Preliminaries}
In this section, we fix basic definitions and standard notations used
throughout the paper. More precisely, we define a set of binomials that sits
inside the presentation ideal of the special fiber ring that we are interested in.
Elementary properties of this set are provided. In particular, this set has a
quadratic Gr\"obner basis.

Throughout this paper, $\KK$ is a field of characteristic zero.  Let $\calD$ be
a diagram of finite lattice points in $\ZZ_+^3$.  Let 
\[
    m\coloneqq \max\Set{i:(i,j,k)\in\calD}, \quad
    n\coloneqq \max\Set{j:(i,j,k)\in\calD} \quad \text{and} \quad
    p\coloneqq \max\Set{k:(i,j,k)\in\calD}.
\] 
Associated to $\calD$ is the polynomial ring 
\[
    R=\KK[x_1,\dots,x_m,y_1,\dots,y_n,z_1,\dots,z_p]
\]
and the monomial ideal 
\[
    I_{\calD}\coloneqq (x_iy_jz_k:(i,j,k)\in \calD)\subset R.
\]
This ideal will be called the \emph{defining ideal} of $\calD$.

If we write $\frakm$ for the graded maximal ideal of $R$, the \emph{special
    fiber ring} of $I_{\calD}$ is
\[
    \calF(I_{\calD})\coloneqq \bigoplus_{l\ge 0} I_{\calD}^{l}/\frakm I_{\calD}^l \cong
    R[I_{\calD}t] \otimes_{R} R/\frakm.
\]
Sometimes, we also call it the \emph{toric ring} of $I_{\calD}$ and denote it
by $\KK[I_{\calD}]$. Let 
\[
    \KK[\bfT_{\calD}]\coloneqq \KK[T_{i,j,k}: (i,j,k)\in \calD]
\]
be the polynomial ring in the variables $T_{i,j,k}$ over the field $\KK$.  Consider the map $\varphi:\KK[\bfT_{\calD}]\to R$, given by $T_{i,j,k}\to x_iy_jz_k$, and extend algebraically. Then $\calF(I_{\calD})$ is canonically isomorphic to $\KK[\bfT_{\calD}]/\ker(\varphi)$. We will denote the kernel ideal $\ker(\varphi)$ by $J_{\calD}$ and call it the \emph{special fiber ideal} of $I_{\calD}$. Sometimes, we also call it the \emph{toric ideal} of $I_{\calD}$ and the \emph{presentation ideal} of $\calF(\calI_{\calD})$.  It is well-known that $J_{\calD}$ is a graded binomial ideal; see, for instance, \cite[Corollary 4.3]{MR1363949} or \cite{MR2611561}.  We also observe that $\calF(\calI_{\calD})$, being isomorphic to a subring of $R$, is a domain. Hence $J_{\calD}$ is a prime ideal.

Related and more complicated is the \emph{Rees algebra} of $I_{\calD}$:
\[
    \calR(I_{\calD})\coloneqq R[I_{\calD}t]=R\oplus I_{\calD}t\oplus
    I_{\calD}^2t^2\oplus \cdots \subset R[t].
\] 
Let 
\[
R[\bfT_{\calD}]\coloneqq R[T_{i,j,k}: (i,j,k)\in \calD]
\]
be the polynomial ring in the variables $T_{i,j,k}$ over $R$. Consider the map
$\psi:R[\bfT_{\calD}]\to R[t]$, given by $T_{i,j,k}\to x_iy_jz_kt$, and extend
algebraically. Then $\calR(I_{\calD})$ is canonically isomorphic to
$R[\bfT_{\calD}]/\ker(\psi)$. The kernel ideal $\ker(\psi)$ will be referred to
as the \emph{Rees ideal} of $I_{\calD}$, or the \emph{presentation ideal} of
$\calR(I_{\calD})$.

In fact, the epimorphism from $R[\bfT_{\calD}]$ to $\calR(I_\calD)$ factors through the symmetric algebra $\Sym(I_{\calD})$ as
\[
    R[\bfT_{\calD}] \stackrel{\alpha}{\onto} \Sym(I_{\calD})\stackrel{\beta}{\onto} \calR(I_\calD).
\]
When the epimorphism $\beta$ is indeed an isomorphism, the ideal $I_\calD$ will be called of \emph{linear type}. The next best possibility with respect to the presentation ideal of $\calR(I_{\calD})$ is when $I_{\calD}$ is of fiber type, a concept introduced by Herzog, Hibi and Vladoiu in \cite{MR2195995}. Using notations above, it can be formulated as follows.
\begin{Definition}
    \label{fiber-type-def}
    The defining ideal $I_\calD$ is called of \emph{fiber type} if 
    \[
        \ker (\psi)= \ker(\alpha)+\ker(\varphi)R[\bfT_{\calD}].
    \]
\end{Definition}

For the given diagram $\calD$, let 
\begin{align*}
    a_{\calD}\coloneqq \left|\Set{i:(i,j,k)\in\calD}\right|, \quad
    b_{\calD}\coloneqq \left|\Set{j:(i,j,k)\in\calD}\right| \text{ and }
    c_{\calD}\coloneqq \left|\Set{k:(i,j,k)\in\calD}\right|
\end{align*}
be the \emph{essential length, width and height} of $\calD$ respectively.  By
abuse of notation, for $\bdu=(i_0,j_0,k_0)\in \calD$, let
\[
a_{\calD}(\bdu)\coloneqq \max\Set{i: (i,j_0,k_0)\in \calD}.
\]
In a similar vein, we can define $b_{\calD}(\bdu)$ and $c_{\calD}(\bdu)$. 

\begin{Definition}
	\label{3F}
    Let $\calD$ be a diagram of finite lattice points in $\ZZ_+^3$. $\calD$ is
    called a \emph{three-dimensional Ferrers diagram} if for each
    $(i_0,j_0,k_0)\in \calD$, for every positive integers $i\le i_0$, $j\le
    j_{0}$ and $k\le k_0$, one has $(i,j,k)\in \calD$.
\end{Definition}

When the diagram $\calD$ is a Ferrers diagram, the defining ideal $I_{\calD}$ will also be called the \emph{Ferrers ideal} of $\calD$.

Obviously, if $\calD$ is a three-dimensional Ferrers diagram, then
$a_{\calD}=a_{\calD}(\bdu_0)$, $b_{\calD}=b_{\calD}(\bdu_0)$ and
$c_{\calD}=c_{\calD}(\bdu_0)$ for the point $\bdu_0=(1,1,1)$. In addition, if one of the three numbers is
one, then we get the classic two-dimensional Ferrers diagram.

Recall that a standard graded $\KK$-algebra $R$ is called \emph{Koszul} if the
residue class field $\KK=R/R_{+}$ has a linear $R$-resolution. If we can write
$R\cong S/J$ as the quotient of a polynomial ring $S$, to show the Koszulness
of $R$, it suffices to show that the homogeneous ideal $J$ has a quadratic
Gr\"obner basis with respect to some monomial order by \cite[Theorem
6.7]{MR2850142}. The aim of this paper is to find classes of three-dimensional
diagrams whose associated toric rings are Koszul. Therefore, we will focus on
those with quadratic Gr\"obner bases in some monomial order.

For a positive integer $n$, we denote by $[n]$ the finite set $\Set{1,2,\dots,n}$.

\begin{Example}
    \label{CubicExam}
    The easiest example is when $\calD$ is a full rectangular cylinder diagram,
    i.e., $\calD$ takes the form $[a_{\calD}]\times
    [b_{\calD}]\times[c_{\calD}]$. It follows from \cite[Theorem
    5.3(b)]{MR1957102} that the toric ring is Koszul in this situation.  
\end{Example}

Let $\bdV\colon \bdv_1,\dotsc,\bdv_m$ be a set of lattice points in
$\ZZ_+^3$.  The minimal three-dimensional Ferrers diagram containing $\bdV$
will be called \emph{the Ferrers diagram generated by $\bdV$}.

Unlike the two-dimensional case in \cite[Theorem 4.2]{CNPY}, not all toric
rings associated to three-dimensional Ferrers diagrams are Koszul.

\begin{Example}
    \label{MinExam}
    \Cref{mingenDegree3} provides a diagram $\calD$ generated by
    \[
        (1,2,3),(1,3,2),(2,1,3),(2,3,1),(3,1,2),(3,2,1).
    \]
    Therefore, it consists of the following lattice points 
    \begin{align*}
        (1,1,1),
        (1,1,2), 
        (1,1,3), 
        (1,2,1), 
        (1,2,2), 
        (1,2,3), 
        (1,3,1), 
        (1,3,2),\\ 
        (2,1,1), 
        (2,1,2), 
        (2,1,3), 
        (2,2,1), 
        (2,3,1), 
        (3,1,1), 
        (3,1,2), 
        (3,2,1). 
    \end{align*}
    This is a three-dimensional Ferrers diagram.  If we use the base ring 
    \begin{align*}
        S&=\KK[
        T_{1,1,1},
        T_{1,1,2}, 
        T_{1,1,3}, 
        T_{1,2,1}, 
        T_{1,2,2}, 
        T_{1,2,3}, 
        T_{1,3,1}, 
        T_{1,3,2}, \\
        & \quad\qquad  T_{2,1,1}, 
        T_{2,1,2}, 
        T_{2,1,3}, 
        T_{2,2,1}, 
        T_{2,3,1}, 
        T_{3,1,1}, 
        T_{3,1,2}, 
        T_{3,2,1}], 
    \end{align*}
    and apply the canonical epimorphism, the minimal generating set of the
    special fiber ideal $J_{\calD}$ contains the following degree three
    binomial
    \begin{align*}
       T_{1,2,3}T_{2,3,1}T_{3,1,2}-T_{1,3,2}T_{2,1,3}T_{3,2,1},
   \end{align*}
   as suggested by \texttt{Macaulay2} \cite{M2}. Thus, by \cite[Proposition
   6.3]{MR2850142}, the toric ring associated to $\calD$ cannot be Koszul. 

\end{Example}   

\begin{figure}[h]
    \begin{center}
        \begin{tikzpicture}[ every node/.style={scale=0.8}, line cap=round,line join=round,>=triangle 45,x=1cm,y=1cm]
            \clip(4.939366111847568,-6.174247535540005) rectangle (13.580633888152436,-0.14575246446002865);
            \fill[line width=2pt,color=qqqqff,fill=qqqqff,fill opacity=0.3] (8,-1) -- (9,-1.5) -- (8,-2) -- (7,-1.5) -- cycle;
            \fill[line width=2pt,color=qqqqff,fill=qqqqff,fill opacity=0.3] (7,-1.5) -- (7,-2.5) -- (8,-3) -- (8,-2) -- cycle;
            \fill[line width=2pt,color=qqqqff,fill=qqqqff,fill opacity=0.3] (8,-2) -- (9,-1.5) -- (9,-2.5) -- (8,-3) -- cycle;
            \fill[line width=2pt,color=qqqqff,fill=qqqqff,fill opacity=0.3] (11,-2.5) -- (12,-3) -- (11,-3.5) -- (10,-3) -- cycle;
            \fill[line width=2pt,color=qqqqff,fill=qqqqff,fill opacity=0.3] (12,-3) -- (12,-4) -- (11,-4.5) -- (11,-3.5) -- cycle;
            \fill[line width=2pt,color=qqqqff,fill=qqqqff,fill opacity=0.3] (10,-3) -- (11,-3.5) -- (11,-4.5) -- (10,-4) -- cycle;
            \fill[line width=2pt,color=qqqqff,fill=qqqqff,fill opacity=0.3] (8,-4) -- (9,-4.5) -- (8,-5) -- (7,-4.5) -- cycle;
            \fill[line width=2pt,color=qqqqff,fill=qqqqff,fill opacity=0.3] (9,-4.5) -- (9,-5.5) -- (8,-6) -- (8,-5) -- cycle;
            \fill[line width=2pt,color=qqqqff,fill=qqqqff,fill opacity=0.3] (7,-4.5) -- (8,-5) -- (8,-6) -- (7,-5.5) -- cycle;
            \fill[line width=2pt,color=qqwuqq,fill=qqwuqq,fill opacity=0.3] (10,-1) -- (11,-1.5) -- (10,-2) -- (9,-1.5) -- cycle;
            \fill[line width=2pt,color=qqwuqq,fill=qqwuqq,fill opacity=0.3] (9,-1.5) -- (9,-2.5) -- (10,-3) -- (10,-2) -- cycle;
            \fill[line width=2pt,color=qqwuqq,fill=qqwuqq,fill opacity=0.3] (10,-2) -- (11,-1.5) -- (11,-2.5) -- (10,-3) -- cycle;
            \fill[line width=2pt,color=qqwuqq,fill=qqwuqq,fill opacity=0.3] (7,-2.5) -- (8,-3) -- (7,-3.5) -- (6,-3) -- cycle;
            \fill[line width=2pt,color=qqwuqq,fill=qqwuqq,fill opacity=0.3] (8,-3) -- (8,-4) -- (7,-4.5) -- (7,-3.5) -- cycle;
            \fill[line width=2pt,color=qqwuqq,fill=qqwuqq,fill opacity=0.3] (6,-3) -- (7,-3.5) -- (7,-4.5) -- (6,-4) -- cycle;
            \fill[line width=2pt,color=qqwuqq,fill=qqwuqq,fill opacity=0.3] (10,-4) -- (11,-4.5) -- (10,-5) -- (9,-4.5) -- cycle;
            \fill[line width=2pt,color=qqwuqq,fill=qqwuqq,fill opacity=0.3] (9,-4.5) -- (9,-5.5) -- (10,-6) -- (10,-5) -- cycle;
            \fill[line width=2pt,color=qqwuqq,fill=qqwuqq,fill opacity=0.3] (10,-5) -- (11,-4.5) -- (11,-5.5) -- (10,-6) -- cycle;
            \draw [line width=2pt] (9,-0.5)-- (7,-1.5);
            \draw [line width=2pt] (7,-1.5)-- (7,-2.5);
            \draw [line width=2pt] (7,-2.5)-- (6,-3);
            \draw [line width=2pt] (6,-3)-- (6,-5);
            \draw [line width=2pt] (9,-0.5)-- (11,-1.5);
            \draw [line width=2pt] (11,-1.5)-- (11,-2.5);
            \draw [line width=2pt] (11,-2.5)-- (12,-3);
            \draw [line width=2pt] (12,-3)-- (12,-5);
            \draw [line width=2pt] (7,-1.5)-- (8,-2);
            \draw [line width=2pt] (8,-2)-- (10,-1);
            \draw [line width=2pt] (8,-1)-- (10,-2);
            \draw [line width=2pt] (10,-2)-- (11,-1.5);
            \draw [line width=2pt] (10,-2)-- (10,-4);
            \draw [line width=2pt] (8,-2)-- (8,-4);
            \draw [line width=2pt] (6,-3)-- (7,-3.5);
            \draw [line width=2pt] (7,-3.5)-- (9,-2.5);
            \draw [line width=2pt] (9,-2.5)-- (11,-3.5);
            \draw [line width=2pt] (9,-3.5)-- (7,-4.5);
            \draw [line width=2pt] (9,-3.5)-- (11,-4.5);
            \draw [line width=2pt] (6,-4)-- (8,-5);
            \draw [line width=2pt] (6,-5)-- (8,-6);
            \draw [line width=2pt] (11,-3.5)-- (11,-5.5);
            \draw [line width=2pt] (12,-5)-- (10,-6);
            \draw [line width=2pt] (10,-6)-- (10,-5);
            \draw [line width=2pt] (10,-5)-- (8,-4);
            \draw [line width=2pt] (8,-5)-- (10,-4);
            \draw [line width=2pt] (8,-5)-- (8,-6);
            \draw [line width=2pt] (8,-6)-- (9,-5.5);
            \draw [line width=2pt] (9,-5.5)-- (10,-6);
            \draw [line width=2pt] (9,-4.5)-- (9,-5.5);
            \draw [line width=2pt] (7,-3.5)-- (7,-5.5);
            \draw [line width=2pt] (7,-2.5)-- (8,-3);
            \draw [line width=2pt] (9,-1.5)-- (9,-2.5);
            \draw [line width=2pt] (10,-3)-- (11,-2.5);
            \draw [line width=2pt] (11,-3.5)-- (12,-3);
            \draw [line width=2pt] (10,-5)-- (12,-4);
            \draw [line width=2pt,color=qqqqff] (8,-1)-- (9,-1.5);
            \draw [line width=2pt,color=qqqqff] (9,-1.5)-- (8,-2);
            \draw [line width=2pt,color=qqqqff] (8,-2)-- (7,-1.5);
            \draw [line width=2pt,color=qqqqff] (7,-1.5)-- (8,-1);
            \draw [line width=2pt,color=qqqqff] (7,-1.5)-- (7,-2.5);
            \draw [line width=2pt,color=qqqqff] (7,-2.5)-- (8,-3);
            \draw [line width=2pt,color=qqqqff] (8,-3)-- (8,-2);
            \draw [line width=2pt,color=qqqqff] (8,-2)-- (7,-1.5);
            \draw [line width=2pt,color=qqqqff] (8,-2)-- (9,-1.5);
            \draw [line width=2pt,color=qqqqff] (9,-1.5)-- (9,-2.5);
            \draw [line width=2pt,color=qqqqff] (9,-2.5)-- (8,-3);
            \draw [line width=2pt,color=qqqqff] (8,-3)-- (8,-2);
            \draw [line width=2pt,color=qqqqff] (11,-2.5)-- (12,-3);
            \draw [line width=2pt,color=qqqqff] (12,-3)-- (11,-3.5);
            \draw [line width=2pt,color=qqqqff] (11,-3.5)-- (10,-3);
            \draw [line width=2pt,color=qqqqff] (10,-3)-- (11,-2.5);
            \draw [line width=2pt,color=qqqqff] (12,-3)-- (12,-4);
            \draw [line width=2pt,color=qqqqff] (12,-4)-- (11,-4.5);
            \draw [line width=2pt,color=qqqqff] (11,-4.5)-- (11,-3.5);
            \draw [line width=2pt,color=qqqqff] (11,-3.5)-- (12,-3);
            \draw [line width=2pt,color=qqqqff] (10,-3)-- (11,-3.5);
            \draw [line width=2pt,color=qqqqff] (11,-3.5)-- (11,-4.5);
            \draw [line width=2pt,color=qqqqff] (11,-4.5)-- (10,-4);
            \draw [line width=2pt,color=qqqqff] (10,-4)-- (10,-3);
            \draw [line width=2pt,color=qqqqff] (8,-4)-- (9,-4.5);
            \draw [line width=2pt,color=qqqqff] (9,-4.5)-- (8,-5);
            \draw [line width=2pt,color=qqqqff] (8,-5)-- (7,-4.5);
            \draw [line width=2pt,color=qqqqff] (7,-4.5)-- (8,-4);
            \draw [line width=2pt,color=qqqqff] (9,-4.5)-- (9,-5.5);
            \draw [line width=2pt,color=qqqqff] (9,-5.5)-- (8,-6);
            \draw [line width=2pt,color=qqqqff] (8,-6)-- (8,-5);
            \draw [line width=2pt,color=qqqqff] (8,-5)-- (9,-4.5);
            \draw [line width=2pt,color=qqqqff] (7,-4.5)-- (8,-5);
            \draw [line width=2pt,color=qqqqff] (8,-5)-- (8,-6);
            \draw [line width=2pt,color=qqqqff] (8,-6)-- (7,-5.5);
            \draw [line width=2pt,color=qqqqff] (7,-5.5)-- (7,-4.5);
            \draw [line width=2pt] (9,-2.5)-- (9,-3.5);
            \draw [line width=2pt,color=qqwuqq] (10,-1)-- (11,-1.5);
            \draw [line width=2pt,color=qqwuqq] (11,-1.5)-- (10,-2);
            \draw [line width=2pt,color=qqwuqq] (10,-2)-- (9,-1.5);
            \draw [line width=2pt,color=qqwuqq] (9,-1.5)-- (10,-1);
            \draw [line width=2pt,color=qqwuqq] (9,-1.5)-- (9,-2.5);
            \draw [line width=2pt,color=qqwuqq] (9,-2.5)-- (10,-3);
            \draw [line width=2pt,color=qqwuqq] (10,-3)-- (10,-2);
            \draw [line width=2pt,color=qqwuqq] (10,-2)-- (9,-1.5);
            \draw [line width=2pt,color=qqwuqq] (10,-2)-- (11,-1.5);
            \draw [line width=2pt,color=qqwuqq] (11,-1.5)-- (11,-2.5);
            \draw [line width=2pt,color=qqwuqq] (11,-2.5)-- (10,-3);
            \draw [line width=2pt,color=qqwuqq] (10,-3)-- (10,-2);
            \draw [line width=2pt,color=qqwuqq] (7,-2.5)-- (8,-3);
            \draw [line width=2pt,color=qqwuqq] (8,-3)-- (7,-3.5);
            \draw [line width=2pt,color=qqwuqq] (7,-3.5)-- (6,-3);
            \draw [line width=2pt,color=qqwuqq] (6,-3)-- (7,-2.5);
            \draw [line width=2pt,color=qqwuqq] (8,-3)-- (8,-4);
            \draw [line width=2pt,color=qqwuqq] (8,-4)-- (7,-4.5);
            \draw [line width=2pt,color=qqwuqq] (7,-4.5)-- (7,-3.5);
            \draw [line width=2pt,color=qqwuqq] (7,-3.5)-- (8,-3);
            \draw [line width=2pt,color=qqwuqq] (6,-3)-- (7,-3.5);
            \draw [line width=2pt,color=qqwuqq] (7,-3.5)-- (7,-4.5);
            \draw [line width=2pt,color=qqwuqq] (7,-4.5)-- (6,-4);
            \draw [line width=2pt,color=qqwuqq] (6,-4)-- (6,-3);
            \draw [line width=2pt,color=qqwuqq] (10,-4)-- (11,-4.5);
            \draw [line width=2pt,color=qqwuqq] (11,-4.5)-- (10,-5);
            \draw [line width=2pt,color=qqwuqq] (10,-5)-- (9,-4.5);
            \draw [line width=2pt,color=qqwuqq] (9,-4.5)-- (10,-4);
            \draw [line width=2pt,color=qqwuqq] (9,-4.5)-- (9,-5.5);
            \draw [line width=2pt,color=qqwuqq] (9,-5.5)-- (10,-6);
            \draw [line width=2pt,color=qqwuqq] (10,-6)-- (10,-5);
            \draw [line width=2pt,color=qqwuqq] (10,-5)-- (9,-4.5);
            \draw [line width=2pt,color=qqwuqq] (10,-5)-- (11,-4.5);
            \draw [line width=2pt,color=qqwuqq] (11,-4.5)-- (11,-5.5);
            \draw [line width=2pt,color=qqwuqq] (11,-5.5)-- (10,-6);
            \draw [line width=2pt,color=qqwuqq] (10,-6)-- (10,-5);
            \draw (7.3,-1.2) node[anchor=north west] {\textbf{(2,1,3)}};
            \draw (9.4,-1.2) node[anchor=north west] {\textbf{(1,2,3)}};
            \draw (5.8,-3.4) node[anchor=north west] {\textbf{(3,1,2)}};
            \draw (10.8,-3.47) node[anchor=north west] {\textbf{(1,3,2)}};
            \draw (6.8,-4.918842113761193) node[anchor=north west] {\textbf{(3,2,1)}};
            \draw (9.8,-5) node[anchor=north west] {\textbf{(2,3,1)}};
            \begin{scriptsize}
                \draw [fill=ududff] (9,-0.5) circle (2.5pt);
                \draw [fill=ududff] (7,-1.5) circle (2.5pt);
                \draw [fill=ududff] (7,-2.5) circle (2.5pt);
                \draw [fill=ududff] (6,-3) circle (2.5pt);
                \draw [fill=ududff] (6,-5) circle (2.5pt);
                \draw [fill=ududff] (11,-1.5) circle (2.5pt);
                \draw [fill=ududff] (11,-2.5) circle (2.5pt);
                \draw [fill=ududff] (12,-3) circle (2.5pt);
                \draw [fill=ududff] (12,-5) circle (2.5pt);
                \draw [fill=ududff] (8,-2) circle (2.5pt);
                \draw [fill=xdxdff] (10,-1) circle (2.5pt);
                \draw [fill=xdxdff] (8,-1) circle (2.5pt);
                \draw [fill=ududff] (10,-2) circle (2.5pt);
                \draw [fill=ududff] (10,-4) circle (2.5pt);
                \draw [fill=ududff] (8,-4) circle (2.5pt);
                \draw [fill=ududff] (7,-3.5) circle (2.5pt);
                \draw [fill=ududff] (9,-2.5) circle (2.5pt);
                \draw [fill=ududff] (11,-3.5) circle (2.5pt);
                \draw [fill=ududff] (9,-3.5) circle (2.5pt);
                \draw [fill=ududff] (7,-4.5) circle (2.5pt);
                \draw [fill=ududff] (11,-4.5) circle (2.5pt);
                \draw [fill=xdxdff] (6,-4) circle (2.5pt);
                \draw [fill=ududff] (8,-5) circle (2.5pt);
                \draw [fill=ududff] (8,-6) circle (2.5pt);
                \draw [fill=ududff] (11,-5.5) circle (2.5pt);
                \draw [fill=ududff] (10,-6) circle (2.5pt);
                \draw [fill=ududff] (10,-5) circle (2.5pt);
                \draw [fill=ududff] (9,-5.5) circle (2.5pt);
                \draw [fill=ududff] (9,-4.5) circle (2.5pt);
                \draw [fill=xdxdff] (7,-5.5) circle (2.5pt);
                \draw [fill=uuuuuu] (8,-3) circle (2pt);
                \draw [fill=xdxdff] (9,-1.5) circle (2.5pt);
                \draw [fill=xdxdff] (10,-3) circle (2.5pt);
                \draw [fill=xdxdff] (12,-4) circle (2.5pt);
            \end{scriptsize}
        \end{tikzpicture}
        \caption{The minimal diagram $\calD$ with a degree $3$ minimal generator}
        \label{mingenDegree3} 
    \end{center}
\end{figure}

Meanwhile, one can easily find abundant three-dimensional Ferrers diagrams
whose associated special fiber ideals are quadratic, while their Gr\"obner
bases in the common monomial orders are not quadratic. To circumvent the
recurrence of high-dimensional entanglement when treating the Gr\"obner
basis, we introduce the following projection property for three-dimensional Ferrers
diagrams.  We will show that three-dimensional Ferrers diagram which satisfies
the projection property will have a Koszul associated toric ring in
\Cref{ToricIsKoszul}.

\begin{Definition} 
    \label{shadow}
    Let $\calD$ be a three-dimensional Ferrers diagram.  For each $2\le i \le a_{\calD}$, let $b_i=\max\Set{j: (i,j,1)\in \calD}$ and $c_i=\max\Set{k: (i,1,k)\in \calD}$. 
    Then the \emph{projection} of the $x=i$ layer is the set
    \[
        \Set{(i-1,j,k)\in \ZZ_+^3: j\le b_i \text{ and }k\le c_i}.
    \]
    And $\calD$ is said to satisfy the \emph{projection property} if the $x=i-1$ layer covers the projection of the $x=i$ layer for $2\le i \le a_{\calD}$, i.e., the following equivalent conditions hold:
    \begin{enumerate}[a]
        \item $(i-1,b_i,c_i)\in \calD$;
        \item if $(i,j_1,k_1)\in \calD$ and $(i,j_2,k_2)\in \calD$, then $(i-1,j_1,k_2)\in \calD$.
    \end{enumerate}
\end{Definition}

Trivially, the cubic diagram in \Cref{CubicExam} satisfies the projection
property.  On the other hand, for the diagram $\calD$ in \Cref{MinExam}, one
has $b_2=c_2=3$. Since $(1,3,3)\notin \calD$, the diagram $\calD$ does not
satisfy the projection property. One can attach $(2,2,2)$ and $(1,3,3)$ to
get the ``closure diagram'' $\overline{\calD}$ with respect to the projection
property, which is illustrated in \Cref{Shadow}.

\begin{figure}[h]
    \begin{center} 
        \begin{tikzpicture}[every node/.style={scale=0.8}, line cap=round,line join=round,>=triangle 45,x=1cm,y=1cm]
            \clip(4.939366111847569,-6.034049975747447) rectangle (13.580633888152443,-0.14575246446002865);
            \fill[line width=2pt,color=qqqqff,fill=qqqqff,fill opacity=0.3] (8,-1) -- (9,-1.5) -- (8,-2) -- (7,-1.5) -- cycle;
            \fill[line width=2pt,color=qqqqff,fill=qqqqff,fill opacity=0.3] (7,-1.5) -- (7,-2.5) -- (8,-3) -- (8,-2) -- cycle;
            \fill[line width=2pt,color=qqqqff,fill=qqqqff,fill opacity=0.3] (8,-2) -- (9,-1.5) -- (9,-2.5) -- (8,-3) -- cycle;
            \fill[line width=2pt,color=qqqqff,fill=qqqqff,fill opacity=0.3] (12,-3) -- (12,-4) -- (11,-4.5) -- (11,-3.5) -- cycle;
            \fill[line width=2pt,color=qqqqff,fill=qqqqff,fill opacity=0.3] (10,-3) -- (11,-3.5) -- (11,-4.5) -- (10,-4) -- cycle;
            \fill[line width=2pt,color=qqqqff,fill=qqqqff,fill opacity=0.3] (8,-4) -- (9,-4.5) -- (8,-5) -- (7,-4.5) -- cycle;
            \fill[line width=2pt,color=qqqqff,fill=qqqqff,fill opacity=0.3] (9,-4.5) -- (9,-5.5) -- (8,-6) -- (8,-5) -- cycle;
            \fill[line width=2pt,color=qqqqff,fill=qqqqff,fill opacity=0.3] (7,-4.5) -- (8,-5) -- (8,-6) -- (7,-5.5) -- cycle;
            \fill[line width=2pt,color=qqwuqq,fill=qqwuqq,fill opacity=0.3] (10,-1) -- (11,-1.5) -- (10,-2) -- (9,-1.5) -- cycle;
            \fill[line width=2pt,color=qqwuqq,fill=qqwuqq,fill opacity=0.3] (9,-1.5) -- (9,-2.5) -- (10,-3) -- (10,-2) -- cycle;
            \fill[line width=2pt,color=qqwuqq,fill=qqwuqq,fill opacity=0.3] (7,-2.5) -- (8,-3) -- (7,-3.5) -- (6,-3) -- cycle;
            \fill[line width=2pt,color=qqwuqq,fill=qqwuqq,fill opacity=0.3] (8,-3) -- (8,-4) -- (7,-4.5) -- (7,-3.5) -- cycle;
            \fill[line width=2pt,color=qqwuqq,fill=qqwuqq,fill opacity=0.3] (6,-3) -- (7,-3.5) -- (7,-4.5) -- (6,-4) -- cycle;
            \fill[line width=2pt,color=qqwuqq,fill=qqwuqq,fill opacity=0.3] (10,-4) -- (11,-4.5) -- (10,-5) -- (9,-4.5) -- cycle;
            \fill[line width=2pt,color=qqwuqq,fill=qqwuqq,fill opacity=0.3] (9,-4.5) -- (9,-5.5) -- (10,-6) -- (10,-5) -- cycle;
            \fill[line width=2pt,color=qqwuqq,fill=qqwuqq,fill opacity=0.3] (10,-5) -- (11,-4.5) -- (11,-5.5) -- (10,-6) -- cycle;
            \fill[line width=2pt,color=zzttqq,fill=zzttqq,fill opacity=0.8] (10,-2) -- (11,-1.5) -- (12,-2) -- (11,-2.5) -- cycle;
            \fill[line width=2pt,color=zzttqq,fill=zzttqq,fill opacity=0.75] (10,-2) -- (10,-3) -- (11,-3.5) -- (11,-2.5) -- cycle;
            \fill[line width=2pt,color=zzttqq,fill=zzttqq,fill opacity=0.75] (11,-2.5) -- (12,-2) -- (12,-3) -- (11,-3.5) -- cycle;
            \fill[line width=2pt,color=zzttqq,fill=zzttqq,fill opacity=0.75] (9,-2.5) -- (10,-3) -- (9,-3.5) -- (8,-3) -- cycle;
            \fill[line width=2pt,color=zzttqq,fill=zzttqq,fill opacity=0.75] (10,-3) -- (10,-4) -- (9,-4.5) -- (9,-3.5) -- cycle;
            \fill[line width=2pt,color=zzttqq,fill=zzttqq,fill opacity=0.75] (8,-3) -- (9,-3.5) -- (9,-4.5) -- (8,-4) -- cycle;
            \draw [line width=2pt] (9,-0.5)-- (7,-1.5);
            \draw [line width=2pt] (7,-1.5)-- (7,-2.5);
            \draw [line width=2pt] (7,-2.5)-- (6,-3);
            \draw [line width=2pt] (6,-3)-- (6,-5);
            \draw [line width=2pt] (9,-0.5)-- (11,-1.5);
            \draw [line width=2pt] (12,-3)-- (12,-5);
            \draw [line width=2pt] (7,-1.5)-- (8,-2);
            \draw [line width=2pt] (8,-2)-- (10,-1);
            \draw [line width=2pt] (8,-1)-- (10,-2);
            \draw [line width=2pt] (10,-2)-- (11,-1.5);
            \draw [line width=2pt] (10,-2)-- (10,-4);
            \draw [line width=2pt] (8,-2)-- (8,-4);
            \draw [line width=2pt] (6,-3)-- (7,-3.5);
            \draw [line width=2pt] (7,-3.5)-- (9,-2.5);
            \draw [line width=2pt] (9,-2.5)-- (11,-3.5);
            \draw [line width=2pt] (6,-4)-- (8,-5);
            \draw [line width=2pt] (6,-5)-- (8,-6);
            \draw [line width=2pt] (11,-3.5)-- (11,-5.5);
            \draw [line width=2pt] (12,-5)-- (10,-6);
            \draw [line width=2pt] (10,-6)-- (10,-5);
            \draw [line width=2pt] (10,-5)-- (8,-4);
            \draw [line width=2pt] (8,-5)-- (10,-4);
            \draw [line width=2pt] (8,-5)-- (8,-6);
            \draw [line width=2pt] (8,-6)-- (9,-5.5);
            \draw [line width=2pt] (9,-5.5)-- (10,-6);
            \draw [line width=2pt] (9,-4.5)-- (9,-5.5);
            \draw [line width=2pt] (7,-3.5)-- (7,-5.5);
            \draw [line width=2pt] (7,-2.5)-- (8,-3);
            \draw [line width=2pt] (9,-1.5)-- (9,-2.5);
            \draw [line width=2pt] (11,-3.5)-- (12,-3);
            \draw [line width=2pt] (10,-5)-- (12,-4);
            \draw [line width=2pt,color=qqqqff] (8,-1)-- (9,-1.5);
            \draw [line width=2pt,color=qqqqff] (9,-1.5)-- (8,-2);
            \draw [line width=2pt,color=qqqqff] (8,-2)-- (7,-1.5);
            \draw [line width=2pt,color=qqqqff] (7,-1.5)-- (8,-1);
            \draw [line width=2pt,color=qqqqff] (7,-1.5)-- (7,-2.5);
            \draw [line width=2pt,color=qqqqff] (7,-2.5)-- (8,-3);
            \draw [line width=2pt,color=qqqqff] (8,-3)-- (8,-2);
            \draw [line width=2pt,color=qqqqff] (8,-2)-- (7,-1.5);
            \draw [line width=2pt,color=qqqqff] (8,-2)-- (9,-1.5);
            \draw [line width=2pt,color=qqqqff] (9,-1.5)-- (9,-2.5);
            \draw [line width=2pt,color=qqqqff] (9,-2.5)-- (8,-3);
            \draw [line width=2pt,color=qqqqff] (8,-3)-- (8,-2);
            \draw [line width=2pt,color=qqqqff] (12,-3)-- (12,-4);
            \draw [line width=2pt,color=qqqqff] (12,-4)-- (11,-4.5);
            \draw [line width=2pt,color=qqqqff] (11,-4.5)-- (11,-3.5);
            \draw [line width=2pt,color=qqqqff] (11,-3.5)-- (12,-3);
            \draw [line width=2pt,color=qqqqff] (10,-3)-- (11,-3.5);
            \draw [line width=2pt,color=qqqqff] (11,-3.5)-- (11,-4.5);
            \draw [line width=2pt,color=qqqqff] (11,-4.5)-- (10,-4);
            \draw [line width=2pt,color=qqqqff] (10,-4)-- (10,-3);
            \draw [line width=2pt,color=qqqqff] (8,-4)-- (9,-4.5);
            \draw [line width=2pt,color=qqqqff] (9,-4.5)-- (8,-5);
            \draw [line width=2pt,color=qqqqff] (8,-5)-- (7,-4.5);
            \draw [line width=2pt,color=qqqqff] (7,-4.5)-- (8,-4);
            \draw [line width=2pt,color=qqqqff] (9,-4.5)-- (9,-5.5);
            \draw [line width=2pt,color=qqqqff] (9,-5.5)-- (8,-6);
            \draw [line width=2pt,color=qqqqff] (8,-6)-- (8,-5);
            \draw [line width=2pt,color=qqqqff] (8,-5)-- (9,-4.5);
            \draw [line width=2pt,color=qqqqff] (7,-4.5)-- (8,-5);
            \draw [line width=2pt,color=qqqqff] (8,-5)-- (8,-6);
            \draw [line width=2pt,color=qqqqff] (8,-6)-- (7,-5.5);
            \draw [line width=2pt,color=qqqqff] (7,-5.5)-- (7,-4.5);
            \draw [line width=2pt,color=qqwuqq] (10,-1)-- (11,-1.5);
            \draw [line width=2pt,color=qqwuqq] (11,-1.5)-- (10,-2);
            \draw [line width=2pt,color=qqwuqq] (10,-2)-- (9,-1.5);
            \draw [line width=2pt,color=qqwuqq] (9,-1.5)-- (10,-1);
            \draw [line width=2pt,color=qqwuqq] (9,-1.5)-- (9,-2.5);
            \draw [line width=2pt,color=qqwuqq] (9,-2.5)-- (10,-3);
            \draw [line width=2pt,color=qqwuqq] (10,-3)-- (10,-2);
            \draw [line width=2pt,color=qqwuqq] (10,-2)-- (9,-1.5);
            \draw [line width=2pt,color=qqwuqq] (7,-2.5)-- (8,-3);
            \draw [line width=2pt,color=qqwuqq] (8,-3)-- (7,-3.5);
            \draw [line width=2pt,color=qqwuqq] (7,-3.5)-- (6,-3);
            \draw [line width=2pt,color=qqwuqq] (6,-3)-- (7,-2.5);
            \draw [line width=2pt,color=qqwuqq] (8,-3)-- (8,-4);
            \draw [line width=2pt,color=qqwuqq] (8,-4)-- (7,-4.5);
            \draw [line width=2pt,color=qqwuqq] (7,-4.5)-- (7,-3.5);
            \draw [line width=2pt,color=qqwuqq] (7,-3.5)-- (8,-3);
            \draw [line width=2pt,color=qqwuqq] (6,-3)-- (7,-3.5);
            \draw [line width=2pt,color=qqwuqq] (7,-3.5)-- (7,-4.5);
            \draw [line width=2pt,color=qqwuqq] (7,-4.5)-- (6,-4);
            \draw [line width=2pt,color=qqwuqq] (6,-4)-- (6,-3);
            \draw [line width=2pt,color=qqwuqq] (10,-4)-- (11,-4.5);
            \draw [line width=2pt,color=qqwuqq] (11,-4.5)-- (10,-5);
            \draw [line width=2pt,color=qqwuqq] (10,-5)-- (9,-4.5);
            \draw [line width=2pt,color=qqwuqq] (9,-4.5)-- (10,-4);
            \draw [line width=2pt,color=qqwuqq] (9,-4.5)-- (9,-5.5);
            \draw [line width=2pt,color=qqwuqq] (9,-5.5)-- (10,-6);
            \draw [line width=2pt,color=qqwuqq] (10,-6)-- (10,-5);
            \draw [line width=2pt,color=qqwuqq] (10,-5)-- (9,-4.5);
            \draw [line width=2pt,color=qqwuqq] (10,-5)-- (11,-4.5);
            \draw [line width=2pt,color=qqwuqq] (11,-4.5)-- (11,-5.5);
            \draw [line width=2pt,color=qqwuqq] (11,-5.5)-- (10,-6);
            \draw [line width=2pt,color=qqwuqq] (10,-6)-- (10,-5);
            \draw (7.3,-1.2) node[anchor=north west] {\textbf{(2,1,3)}};
            \draw (9.4,-1.2) node[anchor=north west] {\textbf{(1,2,3)}};
            \draw (5.8,-3.4) node[anchor=north west] {\textbf{(3,1,2)}};
            \draw (10.8,-3.4) node[anchor=north west] {\textbf{(1,3,2)}};
            \draw (6.9,-4.918842113761193) node[anchor=north west] {\textbf{(3,2,1)}};
            \draw (9.8,-5) node[anchor=north west] {\textbf{(2,3,1)}};
            \draw [line width=2pt] (8,-3)-- (9,-3.5);
            \draw [line width=2pt] (10,-3)-- (9,-3.5);
            \draw [line width=2pt] (9,-3.5)-- (9,-4.5);
            \draw [line width=2pt] (10,-2)-- (11,-2.5);
            \draw [line width=2pt] (11,-2.5)-- (12,-2);
            \draw [line width=2pt] (12,-2)-- (11,-1.5);
            \draw [line width=2pt] (12,-2)-- (12,-3);
            \draw [line width=2pt] (11,-2.5)-- (11,-3.5);
            \draw [line width=2pt,color=zzttqq] (10,-2)-- (11,-1.5);
            \draw [line width=2pt,color=zzttqq] (11,-1.5)-- (12,-2);
            \draw [line width=2pt,color=zzttqq] (12,-2)-- (11,-2.5);
            \draw [line width=2pt,color=zzttqq] (11,-2.5)-- (10,-2);
            \draw [line width=2pt,color=zzttqq] (10,-2)-- (10,-3);
            \draw [line width=2pt,color=zzttqq] (10,-3)-- (11,-3.5);
            \draw [line width=2pt,color=zzttqq] (11,-3.5)-- (11,-2.5);
            \draw [line width=2pt,color=zzttqq] (11,-2.5)-- (10,-2);
            \draw [line width=2pt,color=zzttqq] (11,-2.5)-- (12,-2);
            \draw [line width=2pt,color=zzttqq] (12,-2)-- (12,-3);
            \draw [line width=2pt,color=zzttqq] (12,-3)-- (11,-3.5);
            \draw [line width=2pt,color=zzttqq] (11,-3.5)-- (11,-2.5);
            \draw [line width=2pt,color=zzttqq] (9,-2.5)-- (10,-3);
            \draw [line width=2pt,color=zzttqq] (10,-3)-- (9,-3.5);
            \draw [line width=2pt,color=zzttqq] (9,-3.5)-- (8,-3);
            \draw [line width=2pt,color=zzttqq] (8,-3)-- (9,-2.5);
            \draw [line width=2pt,color=zzttqq] (10,-3)-- (10,-4);
            \draw [line width=2pt,color=zzttqq] (10,-4)-- (9,-4.5);
            \draw [line width=2pt,color=zzttqq] (9,-4.5)-- (9,-3.5);
            \draw [line width=2pt,color=zzttqq] (9,-3.5)-- (10,-3);
            \draw [line width=2pt,color=zzttqq] (8,-3)-- (9,-3.5);
            \draw [line width=2pt,color=zzttqq] (9,-3.5)-- (9,-4.5);
            \draw [line width=2pt,color=zzttqq] (9,-4.5)-- (8,-4);
            \draw [line width=2pt,color=zzttqq] (8,-4)-- (8,-3);
            \draw (10.43,-1.7) node[anchor=north west] {\textbf{(1,3,3)}};
            \draw (8.4,-2.7) node[anchor=north west] {\textbf{(2,2,2)}};
            \begin{scriptsize}
                \draw [fill=ududff] (9,-0.5) circle (2.5pt);
                \draw [fill=ududff] (7,-1.5) circle (2.5pt);
                \draw [fill=ududff] (7,-2.5) circle (2.5pt);
                \draw [fill=ududff] (6,-3) circle (2.5pt);
                \draw [fill=ududff] (6,-5) circle (2.5pt);
                \draw [fill=ududff] (11,-1.5) circle (2.5pt);
                \draw [fill=ududff] (11,-2.5) circle (2.5pt);
                \draw [fill=ududff] (12,-3) circle (2.5pt);
                \draw [fill=ududff] (12,-5) circle (2.5pt);
                \draw [fill=ududff] (8,-2) circle (2.5pt);
                \draw [fill=xdxdff] (10,-1) circle (2.5pt);
                \draw [fill=xdxdff] (8,-1) circle (2.5pt);
                \draw [fill=ududff] (10,-2) circle (2.5pt);
                \draw [fill=ududff] (10,-4) circle (2.5pt);
                \draw [fill=ududff] (8,-4) circle (2.5pt);
                \draw [fill=ududff] (7,-3.5) circle (2.5pt);
                \draw [fill=ududff] (9,-2.5) circle (2.5pt);
                \draw [fill=ududff] (11,-3.5) circle (2.5pt);
                \draw [fill=ududff] (9,-3.5) circle (2.5pt);
                \draw [fill=ududff] (7,-4.5) circle (2.5pt);
                \draw [fill=ududff] (11,-4.5) circle (2.5pt);
                \draw [fill=xdxdff] (6,-4) circle (2.5pt);
                \draw [fill=ududff] (8,-5) circle (2.5pt);
                \draw [fill=ududff] (8,-6) circle (2.5pt);
                \draw [fill=ududff] (11,-5.5) circle (2.5pt);
                \draw [fill=ududff] (10,-6) circle (2.5pt);
                \draw [fill=ududff] (10,-5) circle (2.5pt);
                \draw [fill=ududff] (9,-5.5) circle (2.5pt);
                \draw [fill=ududff] (9,-4.5) circle (2.5pt);
                \draw [fill=xdxdff] (7,-5.5) circle (2.5pt);
                \draw [fill=uuuuuu] (8,-3) circle (2pt);
                \draw [fill=xdxdff] (9,-1.5) circle (2.5pt);
                \draw [fill=xdxdff] (10,-3) circle (2.5pt);
                \draw [fill=xdxdff] (12,-4) circle (2.5pt);
                \draw [fill=ududff] (12,-2) circle (2.5pt);
            \end{scriptsize}
        \end{tikzpicture} 
        \caption{The closure diagram $\overline{\calD}$}
        \label{Shadow} 
    \end{center}
\end{figure}

Here, we explain the introduction of the projection property.

\begin{Remark}
    \label{why-shadow}
    Let $\calD$ be a three-dimensional Ferrers diagram. If $a_{\calD}=1$ or $b_{\calD}=1$ or $c_{\calD}=1$, then $\calD$ is essentially a two-dimensional Ferrers diagram. In this case, $\calD$ automatically satisfies the projection property. From this point of view, three-dimensional Ferrers diagrams which satisfy the projection property are more natural as generalizations of two-dimensional Ferrers diagrams to the three-dimensional case. As a matter of fact, the essential reason that we introduce the projection property lies in the desire to achieve Koszul property. As a necessary condition, the degree 3 generator $T_{1,2,3}T_{2,3,1}T_{3,1,2}-T_{1,3,2}T_{2,1,3}T_{3,2,1}$ in \Cref{MinExam} is not expected to exist in any minimal generating set, due to \cite[Proposition 6.3]{MR2850142}. To remove it with respect to a bigger three-dimensional diagram, one has to seek support from the ubiquitous quadratic generators in Definition \ref{2-minors} via Lemma \ref{Deg2Coincide}.  With this in mind, and if we stick to three-dimensional Ferrers diagrams, $(1,3,3)$ will be the optimal element to attach to the diagram $\calD$ of \Cref{MinExam}. After this is done, the degree 3 generator of the toric ideal will soon be reduced by combinations of quadratic generators.  Notice that in this maneuver, the position $(1,3,3)$ of  $x=1$ layer lies in the projection of the $x=2$ layer, which only respects two adjacent layers. To make it more induction-friendly, we naturally come up with the projection property in Definition \ref{shadow}. 
\end{Remark}

\begin{Observation}
    \label{truncation-2}
    Suppose that $\calD$ is a three-dimensional Ferrers diagram satisfying the
    projection property. Then all the three truncated subdiagrams 
    \[
        \Set{(i,j,k)\in \calD: i\ne i_0}, \quad
        \Set{(i,j,k)\in \calD: j\ne j_0} \text{ and }
        \Set{(i,j,k)\in \calD: k\ne k_0}
    \]
    are essentially three-dimensional Ferrers diagrams which still satisfy the
    projection property.
\end{Observation}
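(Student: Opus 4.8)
The plan is to treat each of the three truncations as the composite of two elementary moves on a down-closed set of lattice points in $\ZZ_+^3$: first deleting one ``slab'' (all points whose $\ell$-coordinate equals a fixed value $c$, for one of the three directions $\ell\in\{x,y,z\}$), and then decreasing by one every $\ell$-coordinate that exceeds $c$. The second move is what the word ``essentially'' in the statement absorbs, since the raw deletion leaves a gap in the index set unless $c$ is the largest value of that coordinate. For each truncation I would check two things: that the result is again a three-dimensional Ferrers diagram, and that it inherits the projection property. The first check is uniform in $\ell$; the second splits into the easy cases $\ell\in\{y,z\}$ and the one case that requires a short argument, $\ell=x$.

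\emph{The Ferrers property.} I would verify directly that the two elementary moves preserve down-closedness. Let $\bdv'$ be a point of the truncated diagram, coming from $\bdu\in\calD$ (with $\ell$-coordinate unchanged if it is below $c$, lowered by one if it is above $c$; it is never exactly $c$), and let $\bdw'$ be any lattice point below $\bdv'$ coordinatewise. If the $\ell$-coordinate of $\bdw'$ is below $c$, then $\bdw'\le\bdu$, so $\bdw'\in\calD$ and $\bdw'$ survives the deletion; if it is at least $c$, then raising it by one produces a point $\bdw\le\bdu$ whose $\ell$-coordinate exceeds $c$, so $\bdw\in\calD$ and the second move sends $\bdw$ to $\bdw'$. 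A short case analysis makes this precise. Since every coordinate range of a Ferrers diagram is an initial interval ($\{1,\dots,a_{\calD}\}$ for the first coordinate, and $\{1,\dots,b_{\calD}\}$, $\{1,\dots,c_{\calD}\}$ for the other two), and the two moves send initial intervals to initial intervals, the truncated diagram is, after the relabelling, a genuine three-dimensional Ferrers diagram. If one of its essential lengths drops to $1$ (for instance, deleting the $x=1$ layer when $a_{\calD}=2$), it becomes a two-dimensional Ferrers diagram, which satisfies the projection property automatically by \Cref{why-shadow}, so there is nothing left to check for that truncation.

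\emph{The $j$- and $k$-truncations.} Here I would use formulation (b) of the projection property in \Cref{shadow}, namely that $(i,j_1,k_1),(i,j_2,k_2)\in\calD$ implies $(i-1,j_1,k_2)\in\calD$. Deleting a $y$-layer and relabelling affects only the $j$-coordinate and is order-preserving on it, while (b) involves only the $j$-value of the first point and the $k$-value of the second. So, given two points of the $j$-truncated diagram in a common $x=i$ layer, I would lift them (un-shift their $j$-coordinates) to points of $\calD$ in its $x=i$ layer, apply (b) for $\calD$, and push the conclusion back down. This uses that deleting a $y$- or $z$-layer cannot raise the essential length, so $i\le a_{\calD'}\le a_{\calD}$ and (b) for $\calD$ is indeed available at index $i$. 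The $k$-truncation is identical with the roles of $j$ and $k$ interchanged.

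\emph{The $x$-truncation.} Say we delete the $x=i_0$ layer. After relabelling, every pair of consecutive layers of the new diagram $\calD'$ is a pair of consecutive layers of $\calD$, with a single exception: when $2\le i_0\le a_{\calD}-1$, the layers at new indices $i_0-1$ and $i_0$ are the old layers $x=i_0-1$ and $x=i_0+1$. For each consecutive pair that is already a consecutive pair of $\calD$, the required instance of (b) for $\calD'$ is literally an instance of (b) for $\calD$, hence holds. For the exceptional pair I would chain (b) twice: given $(i_0+1,j_1,k_1)$ and $(i_0+1,j_2,k_2)$ in $\calD$, (b) for $\calD$ at index $i_0+1$ gives $(i_0,j_1,k_2)\in\calD$; then $(i_0,j_1,1)$ and $(i_0,1,k_2)$ lie in $\calD$ by down-closedness, and (b) for $\calD$ at index $i_0$ (legitimate since here $i_0\ge 2$) gives $(i_0-1,j_1,k_2)\in\calD$, which after relabelling is precisely the instance of (b) needed for $\calD'$. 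I expect the $x$-truncation to be the only real obstacle: one must keep straight which layers of $\calD$ become adjacent in $\calD'$ and then bridge the resulting gap by telescoping the projection property through the deleted layer; once that is set up, the remainder is bookkeeping.
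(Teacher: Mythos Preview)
The paper records this statement as an Observation and offers no proof, so there is nothing to compare against; your argument is correct and would serve as the missing justification.

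One small simplification in the $x$-truncation case: once the first application of condition~(b) yields $(i_0,j_1,k_2)\in\calD$, the Ferrers (down-closedness) property alone already gives $(i_0-1,j_1,k_2)\in\calD$; the detour through $(i_0,j_1,1)$ and $(i_0,1,k_2)$ and a second use of~(b) is not needed. Everything else --- the relabelling formalism, the verification that the Ferrers property survives, the observation that the $y$- and $z$-truncations leave the $x$-layer structure intact so condition~(b) transfers directly, and the handling of the exceptional adjacent pair created by removing an interior $x$-layer --- is exactly what one would want here.
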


Return to $S=\KK[\bfT_{\calD}]=\KK[T_{i,j,k}: (i,j,k)\in \calD]$. Throughout
this paper, the variables will be ordered such that $T_{i,j,k}>T_{i',j',k'}$ if
and only if the tuple $(i,j,k)$ precedes $(i',j',k')$ lexicographically. With
respect to this order of variables, we will consider the lexicographic monomial
order $\prec_{lex}$ on the set of monomials in $S$. A binomial ideal is called
\emph{lexicographically quadratic} if all its minimal Gr\"obner basis elements
with respect to the lexicographic order are quadratic.  When the special fiber
ideal $J_{\calD}$ corresponding to $\calD$ is lexicographically quadratic, we
will simply say that $\calD$ is \emph{lexicographically quadratic}.

For a monomial ideal $I$, we write $\gens(I)$ for the set of its minimal monomial generators. Meanwhile, if $J$ is a binomial ideal, $\ini(J)$ is the initial ideal with respect to the lexicographic monomial order. This is also a monomial ideal. 

In the meantime, given a three-dimensional diagram $\calD$ and a point $\bdu\in \calD$, we always let $\calD_{\bdu}$ be the diagram obtained from $\calD$ by removing those points preceding $\bdu$ lexicographically. This notation benifits our induction argument later in this paper.

The following property shows that the generating set and its initial part can
be inherited by suitable subdiagrams.

\begin{Proposition}
    \label{truncation-formula} 
    Suppose that $\calD$ is a finite three-dimensional diagram and $\bdu=(i_0,j_0,k_0)\in \calD$. Let $\calG$ be one of the following three truncated subdiagrams 
    \[
        \Set{(i,j,k)\in \calD: i\ne i_0}, \quad
        \Set{(i,j,k)\in \calD: j\ne j_0} \text{ and }
        \Set{(i,j,k)\in \calD: k\ne k_0}
    \]
    or $\calG=\calD_{\bdu}$ as defined above. Then the following restriction formulas hold:
    \[
        J_{\calD}\cap \KK[\bfT_{\calG}]=J_{\calG} \quad \text{ and } \quad
        \gens(\ini(J_{\calD}))\cap \KK[\bfT_{\calG}]=\gens(\ini(J_{\calG})). 
    \]
\end{Proposition}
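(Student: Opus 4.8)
The plan is to deduce everything from the reduced Gröbner basis of $J_{\calD}$ with respect to $\prec_{lex}$. The first restriction formula $J_{\calD}\cap\KK[\bfT_{\calG}]=J_{\calG}$ needs nothing about the shape of $\calG$: in each of the four cases $\calG\subseteq\calD$, so $\KK[\bfT_{\calG}]$ is the polynomial subring of $\KK[\bfT_{\calD}]$ spanned by the variables $T_{i,j,k}$ with $(i,j,k)\in\calG$, and the toric map defining $J_{\calG}$ is simply the restriction of $\varphi\colon\KK[\bfT_{\calD}]\to R$. Hence $J_{\calG}=\ker(\varphi|_{\KK[\bfT_{\calG}]})=\ker(\varphi)\cap\KK[\bfT_{\calG}]=J_{\calD}\cap\KK[\bfT_{\calG}]$. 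All the work is in the second formula.

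The point I would isolate is the following. \emph{Let $f=T^{\bfa}-T^{\bfb}\in J_{\calD}$ be a pure difference binomial whose $\prec_{lex}$-leading monomial is $T^{\bfa}$. If $T^{\bfa}\in\KK[\bfT_{\calG}]$, then $T^{\bfb}\in\KK[\bfT_{\calG}]$ as well, so $f\in J_{\calD}\cap\KK[\bfT_{\calG}]=J_{\calG}$.} Granting this, let $G$ be the reduced Gröbner basis of the toric ideal $J_{\calD}$; it is well known that its elements are pure difference binomials. Put $G_{\calG}:=\Set{g\in G:\ini(g)\in\KK[\bfT_{\calG}]}$; by the key point $G_{\calG}=G\cap\KK[\bfT_{\calG}]\subseteq J_{\calG}$. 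Given $0\ne h\in J_{\calG}\subseteq J_{\calD}$, its leading monomial is divisible by $\ini(g)$ for some $g\in G$, and since $\ini(g)\mid\ini(h)\in\KK[\bfT_{\calG}]$ we get $\ini(g)\in\KK[\bfT_{\calG}]$, i.e. $g\in G_{\calG}$. Thus $G_{\calG}$ is a Gröbner basis of $J_{\calG}$, and it is reduced because $G$ is. Consequently $\gens(\ini(J_{\calG}))=\Set{\ini(g):g\in G_{\calG}}=\Set{\ini(g):g\in G \text{ and } \ini(g)\in\KK[\bfT_{\calG}]}=\gens(\ini(J_{\calD}))\cap\KK[\bfT_{\calG}]$, which is the assertion.

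To prove the key point, the four choices of $\calG$ split into two types. For $\calG=\Set{(i,j,k)\in\calD:i\ne i_0}$, the variables missing from $\KK[\bfT_{\calG}]$ are precisely the $T_{i_0,j,k}$. Give $R$ and $\KK[\bfT_{\calD}]$ the $\ZZ^m$-grading with $\deg(x_i)=\deg(T_{i,j,k})=\bfe_i$ and $\deg(y_j)=\deg(z_k)=0$; then $\varphi$ is graded, so $J_{\calD}$ is a graded ideal, and since $\varphi$ sends every monomial of $\KK[\bfT_{\calD}]$ to a nonzero monomial, $J_{\calD}$ contains no monomial, forcing the binomial $f=T^{\bfa}-T^{\bfb}$ to be homogeneous, i.e. $T^{\bfa}$ and $T^{\bfb}$ have the same $\ZZ^m$-degree. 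The $\bfe_{i_0}$-component of the degree of a monomial $T^{\bfc}$ is exactly the total exponent of $T^{\bfc}$ in the variables $T_{i_0,j,k}$; so it vanishes for $T^{\bfa}$ iff $T^{\bfa}$ uses no such variable iff the same holds for $T^{\bfb}$. Hence $T^{\bfa}\in\KK[\bfT_{\calG}]$ implies $T^{\bfb}\in\KK[\bfT_{\calG}]$. The truncations in $j$ and in $k$ are handled identically with the $\ZZ^n$- and $\ZZ^p$-gradings. For $\calG=\calD_{\bdu}$ with $\bdu=(i_0,j_0,k_0)$, the missing variables are the $T_v$ with $v\prec_{lex}\bdu$, and by the chosen order on the variables ($T_v>T_{v'}$ iff $v\prec_{lex}v'$) these are exactly the variables strictly larger than $T_{\bdu}$, i.e. an initial segment of the variable order. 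If $T^{\bfa}\in\KK[\bfT_{\calG}]$, then every variable dividing $T^{\bfa}$ is $\preceq T_{\bdu}$; if some $T_{v_0}$ with $v_0\prec_{lex}\bdu$ divided $T^{\bfb}$, then the largest variable occurring in $T^{\bfa}T^{\bfb}$ would occur only in $T^{\bfb}$, giving $T^{\bfb}\succ_{lex}T^{\bfa}$ and contradicting the choice of leading monomial. So $T^{\bfb}\in\KK[\bfT_{\calG}]$.

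The only genuine obstacle is the key point, and specifically the realization that for the three layer truncations $\prec_{lex}$ is not an elimination order for the omitted variables (which form neither an initial nor a final segment of the variable order in general), so the argument must instead use the $\ZZ^m$-, $\ZZ^n$-, $\ZZ^p$-grading intrinsic to the toric ideal; only for $\calD_{\bdu}$ does the elimination-order property of $\prec_{lex}$ apply directly. Everything else — that a toric ideal has a reduced Gröbner basis consisting of pure difference binomials, that it restricts to the reduced Gröbner basis of $J_{\calG}$, and that the leading terms of a reduced Gröbner basis are the minimal generators of the initial ideal — is routine.
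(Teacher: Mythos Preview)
Your proof is correct and follows essentially the same approach as the paper: the key observations---that for a layer truncation both monomials of a binomial in $J_{\calD}$ share the same $\ZZ^m$- (resp.\ $\ZZ^n$-, $\ZZ^p$-) degree, and that for $\calD_{\bdu}$ the removed variables form an initial segment under $\prec_{lex}$---are exactly those used in the paper, though you package the deduction through the reduced Gr\"obner basis while the paper works directly with initial ideals. The only stylistic difference is that the paper first establishes $\ini(J_{\calD})\cap\KK[\bfT_{\calG}]=\ini(J_{\calG})$ and then passes to minimal generators, whereas you go straight to the minimal generators via $G_{\calG}$.
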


\begin{proof}
    \begin{enumerate}[a]
        \item Without loss of generality, we consider the subdiagram
            \[
                \calG=\Set{(i,j,k)\in \calD: i\ne i_0}.
            \]
            It is clear that $J_{\calG}\subseteq J_{\calD}$. On the other hand,
            take arbitrary binomial 
            \begin{equation}
                \label{f}
                f=T_{\bdu_1}T_{\bdu_2}\cdots T_{\bdu_n}-T_{\bdv_1}T_{\bdv_2}\cdots
                T_{\bdv_n} 
            \end{equation}
            in the binomial ideal $J_{\calD}$. It follows from the definition
            of $J_{\calD}$ that one of the lattice points $\bdu_1,\dots,\bdu_n$
            is on the $x=i_0$ layer if and only if one of the lattice points
            $\bdv_1,\dots,\bdv_n$ is on the $x=i_0$ layer. This implies that
            \[
                J_{\calD}\cap \KK[\bfT_{\calG}]=J_{\calG}.
            \] 
            When $f\in J_{\calG}$, it is also clear that
            $T_{\bdu_1}T_{\bdu_2}\cdots T_{\bdu_n}$ is the leading term of $f$
            in $\KK[\bfT_{\calG}]$ if and only if it is so in
            $\KK[\bfT_{\calD}]$. This implies that
            \[
                \ini(J_{\calD})\cap \KK[\bfT_{\calG}]\supseteq\ini(J_{\calG}).  
            \]
            On the other hand, take arbitrary monomial $T_{\bdu_1} T_{\bdu_2}
            \cdots T_{\bdu_n}\in \ini(J_{\calD})\cap \KK[\bfT_{\calG}]$. By the
            definition of the initial ideal, $T_{\bdu_1}T_{\bdu_2}\cdots
            T_{\bdu_n}$ is the leading term of some
            \[
                f=T_{\bdu_1}T_{\bdu_2}\cdots T_{\bdu_n}-T_{\bdv_1}T_{\bdv_2}\cdots
                T_{\bdv_n}\in J_{\calD}.
            \]
            As argued above, since $T_{\bdu_1}T_{\bdu_2}\cdots T_{\bdu_n}\in
            \KK[\bfT_{\calG}]$, we also have $f\in \KK[\bfT_{\calG}]$. This
            means that $f\in J_{\calD}\cap \KK[\bfT_{\calG}]=J_{\calG}$. And
            the leading term $T_{\bdu_1}T_{\bdu_2}\cdots T_{\bdu_n}\in
            \ini(J_{\calG})$. Therefore, we have shown that
            \[
                \ini(J_{\calD})\cap \KK[\bfT_{\calG}]=\ini(J_{\calG}).  
            \]    
            Consequently,
            \
            \[
                \gens(\ini(J_{\calD}))\cap \KK[\bfT_{\calG}]=\gens(\ini(J_{\calG})). 
            \]
        \item Now, consider the case when $\calG=\calD_{\bdu}$. One simply notice that, using the notation in \eqref{f}, since $T_{\bdu_1} T_{\bdu_2} \cdots T_{\bdu_n}$ is the leading term of $f$ with respect to the lexicographic order, $T_{\bdu_1} T_{\bdu_2} \cdots T_{\bdu_n}\in \KK[\bfT_{\calG}]$ if and only if $f\in \KK[\bfT_{\calG}]$.  The remaining proof is similar to the previous case. 
            \qedhere
    \end{enumerate}
\end{proof}

\begin{Corollary}
    \label{truncation}
    Suppose that $\calD$ is a finite three-dimensional diagram.  If $\calD$ is
    lexicographically quadratic and $\bdu=(i_0,j_0,k_0)\in \calD$, then all the
    three truncated subdiagrams 
    \[
        \Set{(i,j,k)\in \calD: i\ne i_0}, \quad
        \Set{(i,j,k)\in \calD: j\ne j_0} \text{ and }
        \Set{(i,j,k)\in \calD: k\ne k_0}
    \]
    as well as $\calD_{\bdu}$ are again lexicographically quadratic.
\end{Corollary}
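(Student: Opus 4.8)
The plan is to reduce everything to the restriction formula already established in \Cref{truncation-formula}. First I would restate the hypothesis in terms of the initial ideal. Since $\varphi$ is homogeneous when each $T_{i,j,k}$ is given weight $1$, the ideal $J_{\calD}$ is homogeneous; hence every element of the reduced lexicographic Gr\"obner basis of $J_{\calD}$ is homogeneous, and its degree equals the degree of its leading monomial, which is precisely a minimal monomial generator of $\ini(J_{\calD})$. Conversely, every element of $\gens(\ini(J_{\calD}))$ arises this way. So ``$\calD$ is lexicographically quadratic'' is equivalent to the statement that every monomial in $\gens(\ini(J_{\calD}))$ has degree at most $2$.

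Next I would fix one of the four subdiagrams $\calG$ named in the statement (one of the three truncations, or $\calG=\calD_{\bdu}$). By \Cref{truncation-formula} we have
\[
    \gens(\ini(J_{\calD}))\cap \KK[\bfT_{\calG}]=\gens(\ini(J_{\calG})),
\]
so $\gens(\ini(J_{\calG}))$ is a subset of $\gens(\ini(J_{\calD}))$. Assuming $\calD$ is lexicographically quadratic, every monomial in $\gens(\ini(J_{\calD}))$ has degree at most $2$, hence so does every monomial in $\gens(\ini(J_{\calG}))$. Since $J_{\calG}$ is again a homogeneous binomial ideal, the equivalence of the previous paragraph, read in the reverse direction, shows that $\calG$ is lexicographically quadratic. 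Running this argument for each of the three truncated subdiagrams and for $\calD_{\bdu}$ gives the corollary.

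I do not expect a genuine obstacle here: all the substance lives in \Cref{truncation-formula}, and the present corollary is a short formal consequence. The only point deserving a moment's care is the passage between ``quadratic minimal Gr\"obner basis elements'' and ``degree-$\le 2$ minimal generators of the initial ideal''; this is exactly where homogeneity of $J_{\calD}$ (and of $J_{\calG}$) is used, and one should also note in passing that $J_{\calD}$ contains no linear binomial, since the monomials $x_iy_jz_k$ are pairwise distinct, so ``quadratic'' may be read as ``of degree exactly $2$'' without ambiguity.
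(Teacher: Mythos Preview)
Your argument is correct and is precisely the short deduction the paper intends: the corollary is stated without proof immediately after \Cref{truncation-formula}, and your write-up simply spells out that the restriction formula $\gens(\ini(J_{\calD}))\cap \KK[\bfT_{\calG}]=\gens(\ini(J_{\calG}))$ forces the minimal generators of $\ini(J_{\calG})$ to inherit the degree bound. The care you take in translating between ``quadratic minimal Gr\"obner basis'' and ``degree-$2$ minimal generators of the initial ideal'' via homogeneity is exactly the only nontrivial detail, and your observation that there are no linear binomials is a nice bonus.
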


We are now ready to introduce the main subject discussed in this paper.

\begin{Definition}
    \label{2-minors}
    Let $\calD$ be a diagram of finite lattice points in $\ZZ_+^3$.  For
    $\bdu=(i_1,j_1,k_1)$ and $\bdv=(i_2,j_2,k_2)$ in $\calD$, define
    \[
        \bfI_{2,x}(\bdu,\bdv)\coloneqq 
        \begin{cases}
            T_{\bdu}T_{\bdv}-T_{i_2,j_1,k_1}T_{i_1,j_2,k_2}, & \text{if
                $(i_2,j_1,k_1), (i_1,j_2,k_2)\in \calD$},\\
            0, & \text{otherwise}.
        \end{cases}
    \]
    We will simply say \emph{switching the $x$-coordinates} in the first case.
    We can similarly define $\bfI_{2,y}$ and $\bfI_{2,z}$. Now, let
    \[
        \bfI_2(\calD)\coloneqq \Big(\bfI_{2,x}(\bdu,\bdv),\bfI_{2,y}(\bdu,\bdv),
        \bfI_{2,z}(\bdu,\bdv): \bdu, \bdv\in \calD\Big)\subset
        \KK[\bfT_{\calD}],
    \]
    and call it the \emph{2-minors ideal} of $\calD$.  
\end{Definition}

Obviously, when $\calD$ is essentially a two-dimensional diagram,
$\bfI_2(\calD)$ is the traditional 2-minors ideal of $\calD$. 

In the following, we will investigate $\bfI_2(\calD)$ and
$\bfI_2(\calD_{\bdu})$.  It is clear that $\bfI_2(\calD)\subseteq J_{\calD}$,
the special fiber ideal corresponding to the diagram $\calD$. Notice that the
choice of $\bfI_{2,x}(\bdu,\bdv)$, $\bfI_{2,y}(\bdu,\bdv)$ and
$\bfI_{2,z}(\bdu,\bdv)$ is not by accident. Those elements are actually the
degree two binomials of $J_{\calD}$.

\begin{Lemma}
    \label{Deg2Coincide}
    If the nonzero binomial $f=T_{\bdu}T_{\bdv}-T_{\bdu'}T_{\bdv'}$ belongs to
    $J_{\calD}$, then it is one of $\bfI_{2,x}(\bdu,\bdv)$,
    $\bfI_{2,y}(\bdu,\bdv)$ and $\bfI_{2,z}(\bdu,\bdv)$. 
\end{Lemma}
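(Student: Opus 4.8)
The plan is to argue entirely at the level of monomials in $R$, using only that $J_{\calD}=\ker(\varphi)$. Write $\bdu=(i_1,j_1,k_1)$, $\bdv=(i_2,j_2,k_2)$, $\bdu'=(i_1',j_1',k_1')$ and $\bdv'=(i_2',j_2',k_2')$, all of which lie in $\calD$. The hypothesis $f\in J_{\calD}$ says $\varphi(T_{\bdu}T_{\bdv})=\varphi(T_{\bdu'}T_{\bdv'})$, i.e.
\[
    x_{i_1}x_{i_2}\,y_{j_1}y_{j_2}\,z_{k_1}z_{k_2}
    = x_{i_1'}x_{i_2'}\,y_{j_1'}y_{j_2'}\,z_{k_1'}z_{k_2'}
\]
as monomials in the polynomial ring $R$. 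Comparing the $x$-, $y$- and $z$-parts separately, this is equivalent to the three multiset identities $\{i_1,i_2\}=\{i_1',i_2'\}$, $\{j_1,j_2\}=\{j_1',j_2'\}$ and $\{k_1,k_2\}=\{k_1',k_2'\}$.

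Next I would extract the consequence of these identities for the point $\bdu'$: each of its coordinates is one of the two corresponding coordinates of $\bdu,\bdv$, so $\bdu'=(i_\alpha,j_\beta,k_\gamma)$ for some $\alpha,\beta,\gamma\in\{1,2\}$, and then the identities force $\bdv'=(i_{\bar\alpha},j_{\bar\beta},k_{\bar\gamma})$, where $\bar 1=2$ and $\bar 2=1$. Since $f$ depends only on the unordered pair $\{\bdu',\bdv'\}$, and passing from $(\alpha,\beta,\gamma)$ to $(\bar\alpha,\bar\beta,\bar\gamma)$ merely interchanges $\bdu'$ and $\bdv'$, I may assume $\alpha=1$. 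This leaves the four cases $(\beta,\gamma)\in\{(1,1),(1,2),(2,1),(2,2)\}$.

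The case $(\beta,\gamma)=(1,1)$ gives $\bdu'=\bdu$ and $\bdv'=\bdv$, hence $f=0$, contrary to hypothesis; so it cannot occur. In each of the remaining three cases I would read off $\{\bdu',\bdv'\}$ and compare with \Cref{2-minors}. Concretely, $(\beta,\gamma)=(2,2)$ yields $\{\bdu',\bdv'\}=\{(i_2,j_1,k_1),(i_1,j_2,k_2)\}$, so that $f=T_{\bdu}T_{\bdv}-T_{i_2,j_1,k_1}T_{i_1,j_2,k_2}=\bfI_{2,x}(\bdu,\bdv)$; the case $(\beta,\gamma)=(2,1)$ yields $\{\bdu',\bdv'\}=\{(i_1,j_2,k_1),(i_2,j_1,k_2)\}$, so that $f=\bfI_{2,y}(\bdu,\bdv)$; and $(\beta,\gamma)=(1,2)$ yields $\{\bdu',\bdv'\}=\{(i_1,j_1,k_2),(i_2,j_2,k_1)\}$, so that $f=\bfI_{2,z}(\bdu,\bdv)$. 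In each case the two ``switched'' points are exactly $\bdu'$ and $\bdv'$, which lie in $\calD$; hence we are in the nonzero branch of \Cref{2-minors}, and the identification of binomials is literal (the signs agree, since both have positive part $T_{\bdu}T_{\bdv}$). This exhausts all possibilities, proving the lemma.

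Since this is pure bookkeeping, I do not anticipate a genuine obstacle; the only points requiring care are that the normalization $\alpha=1$ stays legitimate when some of the degeneracies $i_1=i_2$, $j_1=j_2$, $k_1=k_2$ occur (in that event several of the four cases collapse together, but always onto one of the three named binomials, and $f\ne 0$ still rules out the trivial case), and that the three surviving cases are matched to the correct one of $\bfI_{2,x},\bfI_{2,y},\bfI_{2,z}$ as fixed by the coordinate convention in \Cref{2-minors}.
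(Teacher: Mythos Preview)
Your proof is correct and follows essentially the same approach as the paper: derive the three multiset identities from $\varphi(T_{\bdu}T_{\bdv})=\varphi(T_{\bdu'}T_{\bdv'})$, normalize so that the $x$-coordinate of $\bdu'$ matches that of $\bdu$, and then case-split on the remaining two coordinates. Your version is slightly more explicit (parametrizing by $(\alpha,\beta,\gamma)$, isolating the $f=0$ case, and noting the degeneracies), but the argument is the same.
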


\begin{proof}
    We may write $\bdu=(i_1,j_1,k_1),
    \bdv=(i_2,j_2,k_2),\bdu'=(i_1',j_1',k_1')$ and $\bdv'=(i_2',j_2',k_2')$. As
    multi-sets, we have
    \[
        \Set{i_1,i_2}=\Set{i_1',i_2'},\quad 
        \Set{j_1,j_2}=\Set{j_1',j_2'}\quad \text{and} \quad
        \Set{k_1,k_2}=\Set{k_1',k_2'}.
    \] 
    Without loss of generality, we may assume that $i_1'=i_1$ and $i_2'=i_2$.
    We have the following three cases.
    \begin{enumerate}[a]
        \item If $j_1'=j_1$, then $k_1'=k_2$ and $f=\bfI_{2,z}(\bdu,\bdv)$.
        \item If $k_1'=k_1$, then $j_1'=j_2$ and $f=\bfI_{2,y}(\bdu,\bdv)$.
        \item If $j_1'=j_2$ and $k_1'=k_2$, then $f=\bfI_{2,x}(\bdu,\bdv)$. \qedhere
    \end{enumerate} 
\end{proof}

\begin{Remark}
    The amiable fact that the degree 2 generators only appear in the form of 2-minors cannot be directly generalized to four-dimensional case, as manifested from the above proof.
\end{Remark}

The main goal of this section is to show the quadratic binomials in $\bfI_2(\calD)$, defined in \Cref{2-minors}, form a Gr\"obner basis of this ideal with respect to the lexicographic order. The next proposition observes that if the toric ring is defined by a three-dimensional Ferrers diagram which satisfies the projection property, then the toric ideal $J_{\calD}$ cannot have any degree three element in the minimal Gr\"obner basis.  Since the degree two part of $J_{\calD}$ coincides with that of $\bfI_2(\calD)$, this leads to the lexicographically quadratic property of the latter ideal.

\begin{Proposition}
    \label{no3}
    Let $\calD$ be a three-dimensional Ferrers diagram which satisfies the projection property. Then none of the minimal Gr\"obner basis element with respect to lexicographic order of the special fiber ideal has degree three.   
\end{Proposition}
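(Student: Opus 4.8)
The plan is to argue by contradiction. Suppose the minimal Gr\"obner basis of $J_{\calD}$ with respect to $\prec_{lex}$ contained an element $f$ of degree three. Since $J_{\calD}$ is a prime binomial ideal and $\ini(f)$ must be a minimal generator of $\ini(J_{\calD})$, one checks that $f$ can be written as $f=M-N$ with $M=\ini(f)$ and with the two monomials coprime; write $M=T_{\bdu_1}T_{\bdu_2}T_{\bdu_3}$ and $N=T_{\bdv_1}T_{\bdv_2}T_{\bdv_3}$ with all $\bdu_s,\bdv_t\in\calD$. As $f\in\ker(\vf)$, the multiset of first coordinates of the $\bdu_s$ equals that of the $\bdv_t$, and likewise for the second and third coordinates. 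Everything follows once we exhibit a \emph{nonzero} binomial $g=\bfI_{2,\ast}(\bdu_i,\bdu_j)$, for some $i\ne j$ and some $\ast\in\{x,y,z\}$, whose $\prec_{lex}$-initial monomial is $T_{\bdu_i}T_{\bdu_j}$: then $g\in\bfI_2(\calD)\subseteq J_{\calD}$, the monomial $\ini(g)=T_{\bdu_i}T_{\bdu_j}$ properly divides $M$, so $M$ is not a minimal generator of $\ini(J_{\calD})$ --- a contradiction. (Deciding which monomial of $\bfI_{2,\ast}(\bdu_i,\bdu_j)$ is the leading one is elementary but needed bookkeeping: if $a_i<a_j$ then $T_{\bdu_i}T_{\bdu_j}$ is leading in $\bfI_{2,x}(\bdu_i,\bdu_j)$ as soon as that binomial is nonzero, with analogous statements for the other two axes under the corresponding inequality among the coordinates.)

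Before the main argument I would make two reductions. First, since $M=\ini(f)$, the largest variable $T_{\bdw}$ among the six points --- that is, the one with $\bdw$ lexicographically smallest --- must divide $M$; relabel so $\bdw=\bdu_1$. Second, if some coordinate layer, say $\Set{(i,j,k)\in\calD:i=i_0}$, meets none of the six points, then all six $T$'s lie in $\KK[\bfT_{\calG}]$ for $\calG=\Set{(i,j,k)\in\calD:i\ne i_0}$; by \Cref{truncation-2}, $\calG$ is again, essentially, a three-dimensional Ferrers diagram with the projection property, and by \Cref{truncation-formula} we get $f\in J_{\calG}$ and $M\in\gens(\ini(J_{\calG}))$, so $f$ still witnesses a degree-three minimal Gr\"obner basis element, now over $\calG$. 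Iterating, we may assume that each $x$-, $y$- and $z$-layer of $\calD$ meets $\{\bdu_1,\dots,\bdv_3\}$; since the six points use at most three distinct values in each coordinate, this forces $a_{\calD},b_{\calD},c_{\calD}\le 3$. Finally, if all three $\bdu_s$ share an $x$-coordinate, then by the coordinate-multiset identities all six points lie in a single $x$-layer, $\calD$ is essentially a two-dimensional Ferrers diagram, and the existence of $f$ contradicts the known quadratic Gr\"obner basis in that case (see \cite[Theorem 4.2]{CNPY}); hence $a_{\calD}\ge 2$, and symmetrically $b_{\calD},c_{\calD}\ge 2$.

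It then remains to produce the reducing $2$-minor when $\calD$ is a Ferrers diagram with the projection property, $a_{\calD},b_{\calD},c_{\calD}\le 3$, and $a_{\calD},b_{\calD},c_{\calD}\ge 2$. The easy case is when two of $\bdu_1,\bdu_2,\bdu_3$ are comparable in the componentwise order and differ in at least two coordinates: switching along any coordinate in which the smaller point is strictly smaller then yields a nonzero $\bfI_{2,\ast}(\bdu_i,\bdu_j)$ whose switched points lie in $\calD$ purely by the Ferrers condition, and whose leading monomial is $T_{\bdu_i}T_{\bdu_j}$ (the componentwise-smaller point is also the $\prec_{lex}$-smallest of the four points involved). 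The delicate case is when $\{\bdu_1,\bdu_2,\bdu_3\}$ is an antichain --- exactly the phenomenon behind \Cref{MinExam}. Here I would invoke the projection property: the $x$-coordinate multiset of the six points repeats each value twice, so every occupied $x$-layer $\Set{x=a_j}$ contains, besides $\bdu_j$, a second point among our six; applying condition (b) of \Cref{shadow} in that layer to $\bdu_j$ and this second point, and then descending by the Ferrers condition, forces the one switched point of a suitable $\bfI_{2,\ast}(\bdu_i,\bdu_j)$ with $a_i<a_j$ --- the one not already guaranteed by the Ferrers condition --- to lie in $\calD$. The remaining, ``axis-aligned'' configurations, where some pair of the $\bdu_s$ differs in exactly one coordinate, are handled separately: such a pair makes the corresponding degree-two submonomial of $M$ rigid, and reading off the coordinate multisets shows either that $N$ shares that submonomial with $M$ (contradicting coprimality) or that a non-degenerate pair involving $\bdu_3$ is available.

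The main obstacle is the antichain case. The projection property transports membership only downward in the $x$-direction, so one cannot choose the pair $\{\bdu_i,\bdu_j\}$ and the switching axis $\ast$ independently: the axis must be picked so that the switched point which is \emph{not} automatically in $\calD$ can in fact be reached from a higher $x$-layer via \Cref{shadow}, while at the same time $T_{\bdu_i}T_{\bdu_j}$, rather than its companion monomial, remains the $\prec_{lex}$-leading term. Arranging one coherent choice of $\{\bdu_i,\bdu_j,\ast\}$ that meets both requirements across the finitely many configurations --- indexed by the relative order of the at most three distinct $x$-, $y$- and $z$-coordinates of the six points and by which of $M$, $N$ carries the lex-largest variable --- is where the real content of the argument lies; the example of \Cref{MinExam} shows that without the projection property no such choice exists.
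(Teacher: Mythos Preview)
Your reduction to diagrams with $a_{\calD},b_{\calD},c_{\calD}\le 3$ via layer truncation is exactly what the paper does. From that point on, however, the paper takes a completely different route: it does not attempt any by-hand case analysis, but simply enumerates with \texttt{Macaulay2} all three-dimensional Ferrers diagrams contained in $[3]^3$ that satisfy the projection property and checks for each that the special fiber ideal is lexicographically quadratic. The script is referenced explicitly.

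Your proposal to locate the reducing $2$-minor directly is more conceptual, and the comparable/antichain dichotomy is the right organizing principle. But the antichain step --- which you yourself flag as ``where the real content of the argument lies'' --- is not carried out. You describe how the projection property \emph{ought} to supply the missing switched point, but you do not verify that a single coherent choice of pair $\{\bdu_i,\bdu_j\}$ and axis $\ast$ works in every configuration, nor that the leading-monomial condition always falls on $T_{\bdu_i}T_{\bdu_j}$. The margin here is narrow: already in the configuration of \Cref{MinExam} with $(1,3,3)$ adjoined, several natural choices of $(\bdu_i,\bdu_j,\ast)$ fail (for instance, $\bfI_{2,x}(\bdu_1,\bdu_2)$ requires $(2,2,3)\in\calD$, which need not hold), and one must select carefully. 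Until that finite but nontrivial case analysis is actually written down, your argument has a genuine gap precisely where the projection property is supposed to do its work. The paper sidesteps this by outsourcing the finitely many cases to the computer --- less elegant, but complete.
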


\begin{proof}
    Notice that all the points involved in such a potential binomial is contained in a $3\times3\times 3$ cube. But the Ferrers diagram property and the projection property are all preserved under layer truncations. Thus, by \Cref{truncation-formula}, it suffices to show that for any three-dimensional Ferrers diagram governed by the point $(3,3,3)$, if it satisfies the projection property, then it is lexicographically quadratic. For this, we can verify by running Macaulay2 \cite{M2} and exhaust all possible cases. One can check, for instance, by running the \texttt{All3()} in the script \texttt{Ferrers3D.m2}. The latter script is attached to the arXiv version (arXiv:1709.03251) of this work, and is also accessible at \texttt{http://www.personal.psu.edu/kul20/Ferrers3D.m2}.
\end{proof}

\begin{Corollary}
    \label{I2lq}
    Let $\calD$ be a three-dimensional Ferrers diagram which satisfies the projection property or is lexicographically quadratic. Then the $2$-minors ideal $\bfI_2(\calD)$ is lexicographically quadratic.
\end{Corollary}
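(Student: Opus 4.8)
The plan is to treat the two hypotheses separately. If $\calD$ is already lexicographically quadratic, then $J_{\calD}$ has a quadratic lexicographic Gr\"obner basis; taking it to be reduced, it generates $J_{\calD}$ and consists of quadratic binomials, so by \Cref{Deg2Coincide} each of its elements is one of $\bfI_{2,x}(\bdu,\bdv)$, $\bfI_{2,y}(\bdu,\bdv)$, $\bfI_{2,z}(\bdu,\bdv)$. Hence $J_{\calD}\subseteq \bfI_2(\calD)$, and together with the evident inclusion $\bfI_2(\calD)\subseteq J_{\calD}$ we obtain $\bfI_2(\calD)=J_{\calD}$, which is lexicographically quadratic by assumption.

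For the projection-property case, the idea is to verify Buchberger's criterion for the quadratic generating set $G=\Set{\bfI_{2,x}(\bdu,\bdv),\bfI_{2,y}(\bdu,\bdv),\bfI_{2,z}(\bdu,\bdv):\bdu,\bdv\in\calD}$ of $\bfI_2(\calD)$, i.e.\ to show that every $S$-polynomial of a pair from $G$ reduces to zero modulo $G$. By the product criterion we may restrict attention to pairs $f,g\in G$ whose leading monomials share a variable; then $\lcm(\ini(f),\ini(g))$ has degree at most three, so $S(f,g)$ is a binomial of degree at most three lying in $\bfI_2(\calD)\subseteq J_{\calD}$. The crucial point is that the lattice points occurring in $S(f,g)$ are confined to a small box: any $2$-minor generator only involves lattice points whose coordinates take at most two distinct values in each of the three directions, and since $f$ and $g$ share a variable the union of their supports, hence the support of $S(f,g)$, exhibits at most three distinct values in each direction.

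Next I would let $\calG\subseteq\calD$ be the subdiagram obtained by deleting every $x$-, $y$-, and $z$-layer of $\calD$ meeting none of these finitely many support points. Iterating \Cref{truncation-2}, $\calG$ is again a three-dimensional Ferrers diagram satisfying the projection property, and after relabeling it lies in $[3]\times[3]\times[3]$. As established in the proof of \Cref{no3}, every three-dimensional Ferrers diagram contained in $[3]\times[3]\times[3]$ with the projection property is lexicographically quadratic; hence, by the first paragraph applied to $\calG$, we have $\bfI_2(\calG)=J_{\calG}$, and this ideal has a (reduced) quadratic lexicographic Gr\"obner basis $H$ consisting of binomials. By \Cref{Deg2Coincide} the elements of $H$ are $2$-minors of $\calG$, and since all the lattice points involved lie in $\calG\subseteq\calD$ each of them is also a $2$-minor of $\calD$, so $H\subseteq G$. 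Finally, $S(f,g)$ is a binomial supported on $\calG$ and lying in $J_{\calD}$, so by the restriction formula of \Cref{truncation-formula} it lies in $J_{\calD}\cap\KK[\bfT_{\calG}]=J_{\calG}=\bfI_2(\calG)$; since $H$ is a Gr\"obner basis of this ideal, $S(f,g)$ reduces to zero modulo $H$, hence modulo $G$. Buchberger's criterion then shows $G$ is a lexicographic Gr\"obner basis of $\bfI_2(\calD)$, and as $G$ consists of quadrics, $\bfI_2(\calD)$ is lexicographically quadratic.

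The step I expect to require the most care is the passage to $\calG$: one must check rigorously that the support of every relevant $S$-polynomial really is confined to three values in each coordinate direction (including the degenerate cases where $\ini(f)=\ini(g)$, or where $S(f,g)$ collapses to lower degree), and that the iterated layer truncation producing $\calG$ simultaneously preserves the projection property and leaves each of the $2$-minors under discussion unchanged, so that the Gr\"obner basis $H$ of $\bfI_2(\calG)$ supplied by the base case genuinely sits inside $G$ and not merely inside $\bfI_2(\calD)$.
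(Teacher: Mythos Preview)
Your argument is correct and is essentially the same approach as the paper's. The paper's one-line proof invokes \Cref{no3} (no degree-three element in the minimal Gr\"obner basis of $J_{\calD}$) together with \Cref{Deg2Coincide} and Buchberger's criterion; you have simply unpacked the truncation argument that is hidden inside the proof of \Cref{no3} and applied it directly to each $S$-polynomial, reducing inside the small subdiagram $\calG\subseteq[3]\times[3]\times[3]$ rather than first passing through the global statement about $J_{\calD}$. The verifications you flag as needing care (support confined to three values per direction, preservation of the projection property under iterated layer truncation via \Cref{truncation-2}, and $H\subseteq G$) all go through as you describe.
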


\begin{proof}
    Since $\bfI_2(\calD)\subseteq J_{\calD}$ and their degree two parts agree by \Cref{Deg2Coincide}, this result follows from \Cref{no3} and the well-known Buchberger's criterion \cite[Theorem 7.3]{MR2363237}.
\end{proof}

\begin{Remark}
    \label{why-shadow-grobner}
    When dealing with Gr\"obner basis by using Buchberger's criterion, one needs to show those $S$-pairs can be reduced to $0$. The projection property provides the sufficient condition for this purpose when all the $a_{\calD}$, $b_{\calD}$ and $c_{\calD}$ are at least three, in view of \Cref{no3}. Therefore, $\calD$ in our mind is a relatively large and more general diagram.  Of course, one can construct diagrams with very few elements, not satisfying the projection property condition,  but still have quadratic Gr\"obner basis.  See, for instance, the subsequent example.  It implies that the projection property is not a necessary condition. It also demonstrates that the Gr\"obner basis is not necessarily quadratic if the monomial ordering is not lexicographic.
\end{Remark}

\begin{Example}
    Consider the three-dimensional Ferrers diagram, generated by the lattice points
    \[
        (1,3,2),(2,1,3), (2,3,1), (3,1,2), (3,2,1). 
    \]
    Since the lattice point $(2,2,2)$ is missing, this diagram does not satisfy the projection property. However, using \texttt{Macaulay2} \cite{M2}, we can verify that this diagram is lexicographically quadratic. Furthermore, if we change the monomial ordering from the lexicographic order to the graded reverse lexicographic order, then the Gr\"obner basis of the presentation ideal is no longer quadratic. Indeed, non-squarefree binomials of degree three emerge.
\end{Example}

With a minor restriction, the lexicographically quadratic property can be obtained in a more general setting. 

\begin{Proposition}
    \label{I2-inheritance} 
    Let $\calD$ be a three-dimensional diagram such that $\bfI_2(\calD)$ is lexicographically quadratic. Let $\calG\subset \calD$ be a subdiagram satisfying one of the following conditions.
    \begin{enumerate}[labelindent=\parindent, leftmargin=*, align=left]
        \item[\textup{(Detaching Condition)}] For any $\bdv_1,\bdv_2\in \calG$
            such that $T_{\bdv_1}T_{\bdv_2}-T_{\bdv_1'}T_{\bdv_2'}\in
            \bfI_2(\calD)$, we will always have $\bdv_1',\bdv_2'\in\calG$.
        \item[\textup{(Leading Monomial Condition)}] $\calG=\calD\setminus
            \bdu$ and for each $\bdv\in \calG$ such that
            $T_{\bdu}T_{\bdv}-T_{\bdu'}T_{\bdv'}\in \bfI_2(\calD)$, the
            monomial $T_{\bdu}T_{\bdv}$ is the leading monomial of this
            binomial. 
    \end{enumerate}
    Then the following restriction formulas hold:
    \[
        \bfI_2(\calD)\cap \KK[\bfT_{\calG}]=\bfI_2(\calG) \quad \text{ and
        }\quad  \gens(\ini(\bfI_2(\calD)))\cap
        \KK[\bfT_{\calG}]=\gens(\ini(\bfI_2(\calG))). 
    \]
    In particular, $\bfI_2(\calG)$ is lexicographically quadratic.
\end{Proposition}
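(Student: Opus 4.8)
The plan is to pass to the minimal Gröbner basis $\calB$ of $\bfI_2(\calD)$ with respect to $\prec_{lex}$. Since $\bfI_2(\calD)$ is lexicographically quadratic by hypothesis, every element of $\calB$ is a quadratic binomial, and since $\bfI_2(\calD)\subseteq J_{\calD}$, \Cref{Deg2Coincide} shows each element of $\calB$ is (up to sign) one of the $2$-minors $\bfI_{2,x}(\bdu,\bdv)$, $\bfI_{2,y}(\bdu,\bdv)$, $\bfI_{2,z}(\bdu,\bdv)$. Everything will be deduced from the following \emph{Key Claim}: if $g\in\calB$ and $\LM(g)\in\KK[\bfT_{\calG}]$, then $g\in\KK[\bfT_{\calG}]$.

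Granting the Key Claim, the two restriction formulas are bookkeeping with the division algorithm. The inclusion $\bfI_2(\calG)\subseteq\bfI_2(\calD)\cap\KK[\bfT_{\calG}]$ is immediate, because each generator of $\bfI_2(\calG)$ either vanishes or is verbatim a generator of $\bfI_2(\calD)$ lying in $\KK[\bfT_{\calG}]$. For the reverse inclusion, take $0\ne f\in\bfI_2(\calD)\cap\KK[\bfT_{\calG}]$ and reduce $f$ modulo $\calB$; at every step the current polynomial stays in $\KK[\bfT_{\calG}]$, so its leading monomial lies in $\KK[\bfT_{\calG}]$, so any reducer $g\in\calB$ used (whose leading monomial then divides that of the current polynomial) has $\LM(g)\in\KK[\bfT_{\calG}]$, hence $g\in\KK[\bfT_{\calG}]$ by the Key Claim. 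Thus the whole reduction stays inside $\KK[\bfT_{\calG}]$ and invokes only elements of $\calB\cap\KK[\bfT_{\calG}]$; since $\calB$ is a Gröbner basis and $f\in\bfI_2(\calD)$, the remainder is $0$, so $f$ is a $\KK[\bfT_{\calG}]$-combination of elements of $\calB\cap\KK[\bfT_{\calG}]$, each of which is a $2$-minor all four of whose vertices lie in $\calG$, hence a generator of $\bfI_2(\calG)$. So $f\in\bfI_2(\calG)$. The same Key Claim shows $\calB\cap\KK[\bfT_{\calG}]$ is a Gröbner basis of $\bfI_2(\calG)$ for $\prec_{lex}$: it lies in $\bfI_2(\calG)$, and for $0\ne f\in\bfI_2(\calG)\subseteq\bfI_2(\calD)$ some $g\in\calB$ has $\LM(g)\mid\LM(f)\in\KK[\bfT_{\calG}]$, whence $g\in\calB\cap\KK[\bfT_{\calG}]$. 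As $\calB$ is minimal, so is $\calB\cap\KK[\bfT_{\calG}]$, giving $\gens(\ini(\bfI_2(\calG)))=\Set{\LM(g):g\in\calB\cap\KK[\bfT_{\calG}]}$; and since $\gens(\ini(\bfI_2(\calD)))=\Set{\LM(g):g\in\calB}$ while $\LM(g)\in\KK[\bfT_{\calG}]$ iff $g\in\KK[\bfT_{\calG}]$ by the Key Claim, the second formula follows. Finally $\bfI_2(\calG)$ is lexicographically quadratic because $\calB\cap\KK[\bfT_{\calG}]$ consists of quadratic binomials.

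It remains to prove the Key Claim, and this is the only place where the two conditions are used. Let $g\in\calB$ with $T_{\bda}T_{\bdb}=\LM(g)\in\KK[\bfT_{\calG}]$, so $\bda,\bdb\in\calG$, and write $g=T_{\bda}T_{\bdb}-T_{\bdc}T_{\bdd}$, a $2$-minor obtained by switching one coordinate of $(\bda,\bdb)$; assume for contradiction that $g\notin\KK[\bfT_{\calG}]$, so one of $\bdc,\bdd$ lies outside $\calG$. Under the \textup{Detaching Condition} this is impossible at once: $\bda,\bdb\in\calG$ and $T_{\bda}T_{\bdb}-T_{\bdc}T_{\bdd}\in\bfI_2(\calD)$, so the switched vertices $\bdc,\bdd$ are forced into $\calG$. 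Under the \textup{Leading Monomial Condition}, $\calG=\calD\setminus\bdu$, so the offending vertex is $\bdu$; comparing $\varphi$-images rules out $\bdu$ occurring twice in $g$ (the image of $T_{\bdu}^2$ is the image of no other degree-two monomial), so $g=T_{\bda}T_{\bdb}-T_{\bdu}T_{\bdc}$ with $\bda,\bdb,\bdc\in\calG$. Then $-g=T_{\bdu}T_{\bdc}-T_{\bda}T_{\bdb}\in\bfI_2(\calD)$ is, by \Cref{Deg2Coincide}, one of $\bfI_{2,x}(\bdu,\bdc),\bfI_{2,y}(\bdu,\bdc),\bfI_{2,z}(\bdu,\bdc)$, and the Leading Monomial Condition applied with $\bdv=\bdc$ forces $T_{\bdu}T_{\bdc}$ to be its leading monomial, contradicting $\LM(g)=T_{\bda}T_{\bdb}$. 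Hence $g\in\KK[\bfT_{\calG}]$ in both cases. I expect this last step --- pinning down exactly where an outside vertex can sit inside a Gröbner basis element, and playing the leading-monomial hypothesis against \Cref{Deg2Coincide} --- to be the only subtle point; the rest is routine division-algorithm bookkeeping.
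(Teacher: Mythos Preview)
Your proof is correct and follows essentially the same approach as the paper's. Both arguments hinge on the same observation---that a quadratic Gr\"obner basis element whose leading monomial lies in $\KK[\bfT_{\calG}]$ must lie entirely in $\KK[\bfT_{\calG}]$---which you isolate as the \emph{Key Claim} and the paper proves inline; the rest is the same division-algorithm bookkeeping. Your treatment is slightly more explicit in one place: you rule out the degenerate case $g_2=T_{\bdu}^2$ under the Leading Monomial Condition, which the paper leaves implicit.
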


\begin{proof} 
    For the first equality, it is clear that $\bfI_2(\calD)\cap
    \KK[\bfT_{\calG}]\supseteq \bfI_2(\calG)$. Thus, it suffices to show the
    reverse containment $\bfI_2(\calD)\cap \KK[\bfT_{\calG}]\subseteq
    \bfI_2(\calG)$.  For this, we take arbitrary binomial $f\in
    \bfI_2(\calD)\cap \KK[\bfT_{\calG}]$. Suppose for contradiction that
    $f\notin \bfI_2(\calG)$.  We can replace $f$ by its remainder with respect
    to $\ini(\bfI_2(\calG))$.  Hence, none of the terms of $f$ belongs to
    $\ini(\bfI_2(\calG))$.  Now we may assume that $f=f_1-f_2$ with $f_1$ being
    the leading monomial and $f_1\notin \ini(\bfI_2(\calG))$. Notice that
    $f_1,f_2\in \KK[\bfT_{\calG}]$. Since $f\in \bfI_2(\calD)$ while
    $\bfI_2(\calD)$ is lexicographically quadratic, we can find some quadratic
    binomial $g=g_1-g_2\in \bfI_2(\calD)$ with $g_1$ being the leading monomial
    and $g_1$ being a factor of $f_1$. Since $f_1\in \KK[\bfT_{\calG}]$, we
    will have $g_1 \in \KK[\bfT_{\calG}]$ as well.  Under the detaching
    condition, we directly get $g_2\in \KK[\bfT_{\calG}]$. Under the leading
    monomial condition, since $g_1$ is the leading monomial and $T_{\bdu}$ does
    not divide $g_1$, $T_{\bdu}$ does not divide $g_2$ as well. This also means
    that $g_2\in \KK[\bfT_{\calG}]$.  In turn, we have $g\in \KK[\bfT_{\calG}]$
    and consequently $g\in \bfI_2(\calG)$.  This implies that $f_1\in
    \ini(\bfI_2(\calG))$, a contradiction.

    For the second equality, it suffices to show that
    $\gens(\ini(\bfI_2(\calD))) \cap \KK[\bfT_{\calG}]\subseteq
    \gens(\ini(\bfI_2(\calG)))$.  Take arbitrary quadratic monomial $g_1\in
    \ini(\bfI_2(\calD))\cap \KK[\bfT_{\calG}]$. By the definition, we can find
    a binomial $g=g_1-g_2\in \bfI_2(\calD)$ with $g_1$ being the leading
    monomial.  Now, the arguments in the previous paragraph still shows that
    $g\in \KK[\bfT_{\calG}]$ and $g_1\in \ini(\bfI_2(\calG))$. 
\end{proof}

\begin{Remark}
    \label{I2-inheritance-application} 
    \begin{enumerate}[a]
        \item The detaching condition is satisfied when $\bdu=(i_0,j_0,k_0)\in
            \calD$ and $\calG$ is one of the following three truncated
            subdiagrams 
            \[
            \Set{(i,j,k)\in \calD: i\ne i_0}, \quad
            \Set{(i,j,k)\in \calD: j\ne j_0} \text{ and }
            \Set{(i,j,k)\in \calD: k\ne k_0}.
            \] 
        \item The leading monomial condition is automatically satisfied when
            $\bdu$ is lexicographically the first point of $\calD$.
    \end{enumerate}
\end{Remark}

\section{Simplicial complex of the initial ideal}
\label{section:Cohen--Macaulay}

Let $\calD$ be a three-dimensional Ferrers diagram which satisfies the projection property. Notice that
the initial ideal of $\bfI_2(\calD)$ is squarefree by \Cref{I2lq}.  The Stanley--Reisner
complex of this initial ideal will be denoted by $\Delta(\calD)$. To be more
specific, 
\[
    \Delta(\calD)\coloneqq \Set{F\subseteq \{T_{\bdu}:\bdu\in \calD\}:
        \prod_{T_{\bdv}\in F}T_{\bdv}\notin \ini(\bfI_2(\calD))}.
\]
 For a subdiagram $\calG\subseteq \calD$, we use
$\Delta(\calD,\calG)$ to represent the restriction complex of $\Delta(\calD)$
to the set $\Set{T_{\bdu}: \bdu\in \calG}$.
On the other hand, for a given simplicial complex $\Delta$ over some set
$\bdX$, let $I_{\Delta}$ be the Stanley--Reisner ideal in the corresponding
polynomial ring $\KK[\bdX]$. 

The purpose of the next section is to show that when $\calD$ is a
three-dimensional Ferrers diagram which satisfies the projection property, the ideal
$\bfI_2(\calD)$ is Cohen--Macaulay. By \cite[Corollary 3.3.5]{MR2724673}, it
suffices to show that $\ini(\bfI_2(\calD))$ is Cohen--Macaulay. To achieve
this, we prove in \Cref{VD-InductionOrder} that the Stanley--Reisner complex,
$\Delta(\calD)$, is pure vertex-decomposable; see \Cref{VD-Definition}. It is
well-known that pure vertex-decomposable complexes are pure shellable, hence
Cohen--Macaulay, or equivalently, their Stanley--Reisner ideals are
Cohen--Macaulay.

In this section, we will recall and build additional tools for the proofs in
the sequel. In particular, we need to determine the dimensions of
the restriction complexes that are involved in the those proofs.

\begin{Remark}
    Throughout this paper, we use implicitly the following well-known fact.
    Let $T_{\bdu}$ be a vertex of a simplicial complex $\Delta$ on $\calV$.
    Then the cone over the link complex $\link_{\Delta}(T_{\bdu})$ with apex
    $T_{\bdu}$ considered as a complex on $\calV$ has Stanley--Reisner ideal
    $I_{\Delta}:T_{\bdu}$, and the Stanley--Reisner ideal of the deletion
    complex $\Delta\setminus T_{\bdu}$ considered as a complex on $\calV$ is
    $(T_{\bdu},I_{\Delta\setminus T_{\bdu}}')$, where $I_{\Delta\setminus
        T_{\bdu}}'$ is the Stanley--Reisner ideal of $\Delta\setminus T_{\bdu}$
    considered as a complex on $\calV\setminus T_{\bdu}$. 
\end{Remark}

\begin{Definition}
    [{\cite{MR0593648}}] 
    \label{VD-Definition}
    A pure simplicial complex $\Delta$ is said to be \emph{vertex-decomposable} if $\Delta$ is a simplex or equal to $\Set{\varnothing}$, or there exists a vertex $v$ such that the link complex $\lk_{\Delta}(v)$ and the deletion complex $\Delta\setminus v$ are both pure and vertex-decomposable and $\dim(\Delta) = \dim(\Delta\setminus v) = \dim(\lk_{\Delta}(v)) + 1$. The vertex $v$ here is called a \emph{shedding vertex}.  
\end{Definition}

\begin{Remark}
    \label{VD-cone}
    Suppose that $\Delta$ is a pure simplicial complex and a cone with apex $v$. It follows from \cite[Proposition 2.4]{MR0593648} that $\Delta$ is vertex-decomposable if and only if $\Delta\setminus v$ is so.
\end{Remark}

For a general finite diagram $\calD$ in $\ZZ_{+}^3$, we use the superscript to denote the corresponding $x$ layers. For instance,
\[
\calD^1\coloneqq \Set{(1,j,k)\in \calD} \quad \text{ and } \quad
\calD^{\ge a}\coloneqq \Set{(i,j,k)\in \calD: i\ge a}.
\] 

\begin{Definition}
Let $\prec$ be a total order on $\calD$. We say that $\prec$ is a
\emph{quasi-lexicographic order} if it satisfies the following two conditions.
\begin{enumerate}[labelindent=\parindent, leftmargin=*, align=left]
    \item [{\textup(QLO-1)}] The points in $\calD^1$ precede the points in
        $\calD^{\ge 2}$ with respect to $\prec$.
    \item [{\textup(QLO-2)}] For distinct $\bdu=(1,j_1,k_1)$ and
        $\bdv=(1,j_2,k_2)$ in $\calD$, if $j_1\le j_2$ and $k_1\le k_2$, then
        $\bdu$ precedes $\bdv$ with respect to $\prec$.
\end{enumerate}
\end{Definition}

Obviously, the lexicographic order is a quasi-lexicographic order. Throughout
this section, we always assume that $\prec$ is a quasi-lexicographic order.

Given a lattice point $\bdu\in \calD^1$, let $\calA_{\bdu}$ be the diagram
obtained from $\calD$ by removing the points before $\bdu$ with respect to
$\prec$. We also write $\calA_{\bdu}^{+}\coloneqq \calA_{\bdu}\setminus \bdu$. Notice that when our quasi-lexicographic order $\prec$ happens to be the lexicographic order, then $\calA_{\bdu}=\calD_{\bdu}$.

\begin{Remark}
    \label{restriction-QLO}
    Let $\calD$ be a finite three-dimensional diagram such that $\bfI_2(\calD)$ is lexicographically quadratic. Suppose that $\bdu=(1,j_1,k_1)\in \calD^1$ is the first point with respect to $\prec$ and let $\calG=\calD\setminus \bdu=\calA_{\bdu}^{+}$.  By (QLO-2), those $\bdv=(1,j_2,k_2)$ preceding $\bdu$ lexicographically must satisfy $j_2<j_1$ and $k_2>k_1$.  In particular, $(1,j_2,k_1)\notin \calD$. This implies that the ``leading monomial condition'' in \Cref{I2-inheritance} holds. Thus, we have the following restriction formulas:
    \[
        \bfI_2(\calD)\cap \KK[\bfT_{\calG}]=\bfI_2(\calG) \quad \text{ and
        }\quad  \gens(\ini(\bfI_2(\calD)))\cap
        \KK[\bfT_{\calG}]=\gens(\ini(\bfI_2(\calG))). 
    \]
    By induction, for any $\bdw\in \calD^{1}$, we have similar formulas:
      \[
        \bfI_2(\calD)\cap \KK[\bfT_{\calA_{\bdw}}]=\bfI_2(\calA_{\bdw}) \quad \text{ and
        }\quad  \gens(\ini(\bfI_2(\calD)))\cap
        \KK[\bfT_{\calA_{\bdw}}]=\gens(\ini(\bfI_2(\calA_{\bdw}))). 
    \] 
    In particular, the restriction complexes $\Delta(\calD,\calA_{\bdw})=\Delta(\calA_{\bdw})$.
\end{Remark}

With respect to the diagram $\calD$ above, define
\begin{align*}
    N(\calD)\coloneqq \Set{\bdu\in \calD^{1}: \ini(\bfI_2(\calA_{\bdu}))\supsetneq
        \ini(\bfI_2(\calA_{\bdu}^{+}))\KK[\bfT_{\calA_{\bdu}}]}
\end{align*}
and $\Phan(\calD)\coloneqq  \calD^{1}\setminus N(\calD)$ to be the set of \emph{normal
    points} and \emph{phantom points} with respect to the quasi-lexicographic
order $\prec$ respectively.  

\begin{Remark}
    \label{equivalent-definition}
    Let $\calD$ be a finite three-dimensional diagram such that $\bfI_2(\calD)$
    is lexicographically quadratic. For each $\bdu\in \calD^1$, as pointed out
    in \Cref{restriction-QLO}, the pair $\calA_{\bdu}^{+}\subset \calA_{\bdu}$
    satisfies the leading monomial condition. Thus, $\bdu$ is a normal point if
    and only if $\bdu$ can switch coordinates with some $\bdv\in
    \calA_{\bdu}^{+}$.  Dually, $\bdu$ is a phantom point if and only if
    $\Delta(\calA_{\bdu})$ is a cone over $\Delta(\calA_{\bdu})\setminus
    T_{\bdu}=\Delta(\calA_{\bdu}^{+})$ with the apex $T_{\bdu}$.
\end{Remark}

Indeed, when $\calD$ is a three-dimensional Ferrers diagram satisfying the
projection property, the sets of normal points and phantom points are independent
of the specific quasi-lexicographic order that we choose.

\begin{Discussion}
    \label{phantom-x1}
    Let $\calD$ be a three-dimensional Ferrers diagram satisfying the
    projection property and $a_{\calD}\ge 2$.  It is clear that if
    $\bdu=(1,j_1,k_1)\in\calD$ is a phantom point, then it is a border point,
    i.e., $(1,j_1+1,k_1+1)\notin \calD$.  Indeed, if otherwise, then $\bdu$
    can make a $y$-coordinates switch with the point $(1,j_1+1,k_1+1)$.
    
    Denote the set of such border points by $\calB$.  Now, we apply
    \Cref{equivalent-definition} to exclude normal points in $\calB$.  The
    following border points are normal points.
    \begin{enumerate}[i]
        \item [\text{$(\calB_y)$}] The border points on the
            $y=1,{2},\dots,b_{\calD^{\ge 2}}-1$ lines with minimal
            $z$-coordinates.
        \item [\text{$(\calB_z)$}] The border points on the
            $z=1,{2},\dots,c_{\calD^{\ge 2}}-1$ lines with minimal
            $y$-coordinates.
    \end{enumerate}
    For $\bdu=(1,j_1,k_1)\in \calB_y$, we have $(1,j_1+1,k_1)\in \calD$ and
    $\bdu$ can switch $y$-coordinates with $(2,j_1+1,1)$. On the other hand,
    if $\bdu$ is a border point which can switch $y$-coordinates within
    $\calA_{\bdu}$, it can only do so with points in $\calD^{\ge 2}$ by (QLO-2).
    Hence such $\bdu\in \calB_y$. We can similarly talk about $\calB_z$.
    Thus, border points in these two sets are the border points such that
    $y$-coordinates switch or $z$-coordinates switch is available.
    Furthermore, these two sets are disjoint. To see this, suppose that
    $\bdu=(1,j_1,k_1)\in \calB_y\cap\calB_z$. Since $\bdu\in \calB_y$,
    $(2,j_1+1,1)\in \calD$. Similarly, $(2,1,k_1+1)\in \calD$. By the
    projection property, this leads to $(1,j_1+1,k_1+1)\in \calD$. But $\bdu$
    is supposed to be a border point. This is a contradiction.
    
    For $\bdv=(1,j_2,k_2)\in \calB\setminus(\calB_y\cup\calB_z)$, it is a
    normal point if and only if it can switch $x$-coordinates with some $\bdv$
    later than $\bdv$ with respect to $\prec$. A necessary condition for this
    to happen is $(2,j_2,k_2)\in \calD$. Hence $b_{\calD^{\ge 2}}\ge j_2$
    and $c_{\calD^{\ge 2}}\ge k_2$. By the projection property, $(1,b_{\calD^{\ge
    2}},c_{\calD^{\ge 2}})\in \calD$. Since $\bdv$ is a border point, either
    $j_2=b_{\calD^{\ge 2}}$ or $k_2=c_{\calD^{\ge 2}}$. If $j_2=b_{\calD^{\ge
    2}}$ while $k_2<c_{\calD^{\ge 2}}$, then $z$-coordinates switch is
    available and $\bdv\in \calB_z$. We can similarly discuss the symmetric
    case. Since we assume $\bdv\notin \calB_y\cup \calB_z$ in this case, we
    must have $\bdv=(1,b_{\calD^{\ge 2}},c_{\calD^{\ge 2}})$. By (QLO-2), this
    is the only point in $\calA_{\bdv}^1$ which can increase its
    $x$-coordinate. Thus, $x$-coordinates switch is not possible for
    $\bdv$.

    Therefore, the normal points and phantom points with respect to $\prec$
    agree with those defined with respect to the lexicographic order.
    Furthermore, the above discussion shows that 
    \[
    \left| N(\calD) \cap \calB \right|= b_{\calD^{\ge 2}}+c_{\calD^{\ge 2}}-2.
    \]
    Since $\left| \calB \right|=b_{\calD}+c_{\calD}-1$,
    \begin{equation}
        \label{x1-phantonpoints-number}
        \left| \Phan(\calD) \right|= \left| \calB \right|-\left| N(\calD) \cap
            \calB \right|=(b_{\calD}-b_{\calD^{\ge
                2}})+(c_{\calD}-c_{\calD^{\ge 2}})+1.
    \end{equation}
    This is the expected number in view of \Cref{DimI2} and
    \Cref{VD-InductionOrder}.
\end{Discussion}

\begin{Observation}
    \label{HeightFormula}
    Let $\calD$ be a finite three-dimensional Ferrers diagram such that
    $\bfI_2(\calD)$ is lexicographically quadratic. Take arbitrary $\bdu\in
    \calD^1$. Suppose that $\ini(\bfI_2(\calA_{\bdu}))$ and
    $\ini(\bfI_2(\calA_{\bdu}^{+}))$ are unmixed, or, equivalently,
    $\Delta(\calA_{\bdu})$ and $\Delta(\calA_{\bdu}^{+})$ are pure.
    \begin{enumerate}[i]
        \item If $\bdu$ is a phantom point, then trivially
            $\codim\bfI_2(\calA_{\bdu})=\codim\bfI_2(\calA_{\bdu}^{+})$. 
        \item If $\bdu$ is a normal point, then $\Delta(\calA_{\bdu})$ and
            $\Delta(\calA_{\bdu}^{+})$ have the same dimension by the
            definition of $N(\calD)$, \Cref{restriction-QLO} and
            \Cref{equivalent-definition}. Hence,
            $\codim\bfI_2(\calA_{\bdu})=\codim\bfI_2(\calA_{\bdu}^{+})+1$.
    \end{enumerate}
\end{Observation}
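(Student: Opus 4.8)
The plan is to reduce both identities to a comparison of dimensions of Stanley--Reisner complexes. Since passing from a homogeneous ideal to an initial ideal preserves the Hilbert function, hence the Krull dimension and the codimension, and since $\ini(\bfI_2(\calA_{\bdu}))$ is the Stanley--Reisner ideal of $\Delta(\calA_{\bdu})$ in a polynomial ring on $|\calA_{\bdu}|$ variables, we have
\[
    \codim \bfI_2(\calA_{\bdu}) = |\calA_{\bdu}| - 1 - \dim \Delta(\calA_{\bdu}),
\]
and likewise for $\calA_{\bdu}^{+}$. As $|\calA_{\bdu}| = |\calA_{\bdu}^{+}| + 1$, everything comes down to comparing $\dim \Delta(\calA_{\bdu})$ with $\dim \Delta(\calA_{\bdu}^{+})$. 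First I would note, via \Cref{restriction-QLO} applied with $\bdu$ as the first point of $\calA_{\bdu}$, that the deletion complex $\Delta(\calA_{\bdu}) \setminus T_{\bdu}$ equals the restriction complex $\Delta(\calA_{\bdu}, \calA_{\bdu}^{+})$, which is exactly $\Delta(\calA_{\bdu}^{+})$; in particular $\Delta(\calA_{\bdu}^{+})$ is a subcomplex of $\Delta(\calA_{\bdu})$.

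For (i), suppose $\bdu$ is a phantom point. By \Cref{equivalent-definition} the complex $\Delta(\calA_{\bdu})$ is the cone with apex $T_{\bdu}$ over $\Delta(\calA_{\bdu}) \setminus T_{\bdu} = \Delta(\calA_{\bdu}^{+})$, so $\dim \Delta(\calA_{\bdu}) = \dim \Delta(\calA_{\bdu}^{+}) + 1$ (and, consistently with the hypothesis, one complex is pure iff the other is). Plugging this into the displayed formula, the extra variable and the extra unit of dimension cancel, yielding $\codim \bfI_2(\calA_{\bdu}) = \codim \bfI_2(\calA_{\bdu}^{+})$. For (ii), suppose $\bdu$ is a normal point and put $d = \dim \Delta(\calA_{\bdu})$. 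Since $\Delta(\calA_{\bdu}^{+})$ is a subcomplex, $\dim \Delta(\calA_{\bdu}^{+}) \le d$. For the opposite inequality I would again invoke \Cref{equivalent-definition}: a normal point is not a cone apex of $\Delta(\calA_{\bdu})$, so some facet $F$ of $\Delta(\calA_{\bdu})$ does not contain $T_{\bdu}$; by purity of $\Delta(\calA_{\bdu})$ we get $\dim F = d$, and since $F$ avoids $T_{\bdu}$ it lies in $\Delta(\calA_{\bdu}^{+})$, so $\dim \Delta(\calA_{\bdu}^{+}) \ge d$. Hence $\dim \Delta(\calA_{\bdu}^{+}) = d$, and the displayed formula gives $\codim \bfI_2(\calA_{\bdu}) = \codim \bfI_2(\calA_{\bdu}^{+}) + 1$.

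The computation is short and there is no genuine difficulty; the only step that is not purely formal is the use of the purity hypothesis in (ii) to promote a facet avoiding $T_{\bdu}$ to a \emph{top-dimensional} face of $\Delta(\calA_{\bdu}^{+})$ --- without purity one would merely obtain $\dim \Delta(\calA_{\bdu}^{+}) \le d$ with no matching lower bound. The strict containment of initial ideals in the definition of $N(\calD)$ plays no independent role beyond its reformulation through \Cref{equivalent-definition} as ``$T_{\bdu}$ is not a cone apex''.
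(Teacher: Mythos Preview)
Your argument is correct and is essentially a faithful unpacking of the paper's own justification: the paper gives no separate proof, but the references to the definition of $N(\calD)$, \Cref{restriction-QLO}, and \Cref{equivalent-definition} amount precisely to the cone/non-cone dichotomy you spell out, together with the use of purity to promote a $T_{\bdu}$-avoiding facet to top dimension. Your added detail on passing between codimensions of $\bfI_2$ and dimensions of $\Delta$ via the initial ideal is the standard translation and matches what the paper takes for granted.
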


\begin{Lemma}
    \label{DimI2}
    If $\calD$ is a three-dimensional Ferrers diagram with $\Delta(\calD)$
    being pure, then the dimension of $\Delta(\calD)$ is
    $a_{\calD}+b_{\calD}+c_{\calD}-3$.
\end{Lemma}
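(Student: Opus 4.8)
The plan is to exhibit one explicit facet of $\Delta(\calD)$ with exactly $a_{\calD}+b_{\calD}+c_{\calD}-2$ vertices; since $\Delta(\calD)$ is assumed pure, every facet will then have that cardinality, and $\dim\Delta(\calD)=a_{\calD}+b_{\calD}+c_{\calD}-3$ follows immediately. Abbreviate $a=a_{\calD}$, $b=b_{\calD}$, $c=c_{\calD}$ and set
\[
    L\coloneqq \Set{(i,1,1):1\le i\le a}\cup \Set{(1,j,1):2\le j\le b}\cup \Set{(1,1,k):2\le k\le c}.
\]
Since $(a,1,1)$, $(1,b,1)$, $(1,1,c)\in\calD$, the Ferrers property gives $L\subseteq\calD$; the three sets are pairwise disjoint, so $\lvert L\rvert=a+b+c-2$.

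The first step is to check that $L$ is a face, i.e.\ $\prod_{\bdu\in L}T_{\bdu}\notin\ini(\bfI_2(\calD))$. As $\bfI_2(\calD)\subseteq J_{\calD}$, it is enough to show $\prod_{\bdu\in L}T_{\bdu}\notin\ini(J_{\calD})$, and since $J_{\calD}=\ker\varphi$ is a toric ideal, a monomial lies outside $\ini(J_{\calD})$ precisely when it is $\prec_{lex}$-minimal among all monomials of $\KK[\bfT_{\calD}]$ with the same image under $\varphi$. A direct computation gives
\[
    \varphi\Bigl(\prod_{\bdu\in L}T_{\bdu}\Bigr)=x_1^{b+c-1}x_2\cdots x_a\cdot y_1^{a+c-1}y_2\cdots y_b\cdot z_1^{a+b-1}z_2\cdots z_c,
\]
so a competitor writes this monomial of $R$ as a product of $a+b+c-2$ generators $x_iy_jz_k$ with $(i,j,k)\in\calD$. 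Reading off exponents, exactly $a-1$ of these generators have first coordinate $\ge 2$, exactly $b-1$ have second coordinate $\ge 2$, and exactly $c-1$ have third coordinate $\ge 2$; every generator other than $x_1y_1z_1$ contributes to at least one of these counts, so at least $(a+b+c-2)-\bigl((a-1)+(b-1)+(c-1)\bigr)=1$ of the generators equals $x_1y_1z_1$. Thus $T_{1,1,1}$ has exponent $\ge 1$ in every monomial of this fibre, and exponent exactly $1$ in $\prod_{\bdu\in L}T_{\bdu}$. If that exponent equals $1$, then the remaining $a+b+c-3$ generators are counted exactly once each, so none of them has two coordinates $\ge 2$, forcing them to be $x_2y_1z_1,\dots,x_ay_1z_1$, $x_1y_2z_1,\dots,x_1y_bz_1$, $x_1y_1z_2,\dots,x_1y_1z_c$ — that is, the factorization is exactly $L$. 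Hence $\prod_{\bdu\in L}T_{\bdu}$ is the unique monomial of its fibre with $T_{1,1,1}$-exponent $1$ and all others have strictly larger $T_{1,1,1}$-exponent; as $T_{1,1,1}$ is the $\prec_{lex}$-largest variable, $\prod_{\bdu\in L}T_{\bdu}$ is $\prec_{lex}$-minimal in its fibre.

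The second step is to check that $L$ is \emph{maximal}. Take $\bdw=(i_0,j_0,k_0)\in\calD\setminus L$; then at least two of $i_0,j_0,k_0$ exceed $1$. If $i_0\ge 2$, consider the binomial $\bfI_{2,x}\bigl((1,1,1),\bdw\bigr)$; if $i_0=1$, so $j_0,k_0\ge 2$, consider $\bfI_{2,y}\bigl((1,1,1),\bdw\bigr)$. In either case the auxiliary points lie in $\calD$ by the Ferrers property, the binomial is nonzero because the switch genuinely changes the pair of points, and its second term does not involve the variable $T_{1,1,1}$; since $T_{1,1,1}$ is the largest variable, the leading monomial of this element of $\bfI_2(\calD)$ is $T_{1,1,1}T_{\bdw}$. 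As $(1,1,1)\in L$, this leading monomial divides $\prod_{\bdu\in L\cup\{\bdw\}}T_{\bdu}$, so $\prod_{\bdu\in L\cup\{\bdw\}}T_{\bdu}\in\ini(\bfI_2(\calD))$ and $L\cup\{\bdw\}$ is not a face. Therefore $L$ is a facet of dimension $a+b+c-3$, and purity of $\Delta(\calD)$ yields $\dim\Delta(\calD)=a_{\calD}+b_{\calD}+c_{\calD}-3$; note that the projection property is not used anywhere.

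The step I expect to be the main obstacle is the rigidity assertion inside the first step — that fixing the $T_{1,1,1}$-exponent at $1$ pins the entire factorization down to $L$; the rest is the definition of $\bfI_{2,x}$ and $\bfI_{2,y}$ from \Cref{2-minors}, routine bookkeeping, and the elementary containment $\bfI_2(\calD)\subseteq J_{\calD}$. (Alternatively one could compute $\dim\Delta(\calD)$ as $\dim\bigl(\KK[\bfT_{\calD}]/\bfI_2(\calD)\bigr)-1$ using that an initial ideal preserves Krull dimension and that the affine semigroup generated by the exponent vectors of the $x_iy_jz_k$, $(i,j,k)\in\calD$, has rank $a+b+c-2$; but the matching upper bound still needs purity together with a small facet, so the argument above is the cleaner route.)
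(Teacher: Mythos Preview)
Your proof is correct and uses exactly the facet the paper uses; the paper merely asserts that this set is a facet (``one checks with ease''), and you fill in the details. Your verification that $L$ is a face via lex-minimality in the fibre of $J_{\calD}$ is more than strictly needed---under the standing projection-property assumption, $\ini(\bfI_2(\calD))$ is generated by leading terms of $2$-minors (\Cref{I2lq}), and a direct check shows none of these is supported on $L$---but it is valid and, as you observe, independent of that assumption.
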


\begin{proof}
    One checks with ease that 
    \begin{align*}
        \{ T_{1,1,1},T_{2,1,1},\dots,T_{a_{\calD},1,1}, T_{1,2,1},T_{1,3,1},
        \dots,T_{1,b_{\calD},1}, T_{1,1,2},T_{1,1,3},\dots,T_{1,1,c_{\calD}}\}
    \end{align*}
    forms a facet of $\Delta(\calD)$. And its cardinality is exactly
    $a_{\calD}+b_{\calD}+c_{\calD}-2$.
\end{proof}

\begin{Lemma}
    \label{DimFiber}
    Let $\calD$ be a three-dimensional Ferrers diagram. Then
    $\dim(\calF(\calD))=a_{\calD}+b_{\calD}+c_{\calD}-2$.
\end{Lemma}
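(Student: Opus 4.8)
The plan is to identify $\calF(\calD)$ as a toric (affine semigroup) ring and compute its Krull dimension as the rank of the associated exponent lattice, since the only hypothesis available here is that $\calD$ is a Ferrers diagram and the Gr\"obner-theoretic machinery of the preceding sections does not apply. Because $\calD$ is a Ferrers diagram, the points $(i,1,1)$ for $1\le i\le a_\calD$, $(1,j,1)$ for $1\le j\le b_\calD$ and $(1,1,k)$ for $1\le k\le c_\calD$ all belong to $\calD$; in particular $m=a_\calD$, $n=b_\calD$ and $p=c_\calD$, so $R$ is a polynomial ring in $a_\calD+b_\calD+c_\calD$ variables and $\calF(\calD)=\KK[x_iy_jz_k:(i,j,k)\in\calD]\subseteq R$. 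Writing $\mathbf a_{i,j,k}\in\ZZ^{a_\calD+b_\calD+c_\calD}=\ZZ^{a_\calD}\oplus\ZZ^{b_\calD}\oplus\ZZ^{c_\calD}$ for the exponent vector of $x_iy_jz_k$, which is the sum of the $i$-th standard basis vector of the first block, the $j$-th of the second, and the $k$-th of the third, the Krull dimension of $\calF(\calD)$ equals the rank of the subgroup $L\subseteq\ZZ^{a_\calD+b_\calD+c_\calD}$ generated by $\Set{\mathbf a_{i,j,k}:(i,j,k)\in\calD}$; this is the standard description of the dimension of a toric ring (see, e.g., \cite{MR2810322}).

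For the upper bound, the two linear functionals on $\ZZ^{a_\calD+b_\calD+c_\calD}$ given by ``sum over the first block minus sum over the second block'' and ``sum over the second block minus sum over the third block'' are linearly independent and both vanish on every $\mathbf a_{i,j,k}$, so $\rank L\le a_\calD+b_\calD+c_\calD-2$. For the matching lower bound, I would exhibit the $a_\calD+b_\calD+c_\calD-2$ exponent vectors attached to the coordinate-axis points, namely $\mathbf a_{i,1,1}$ for $1\le i\le a_\calD$, $\mathbf a_{1,j,1}$ for $2\le j\le b_\calD$, and $\mathbf a_{1,1,k}$ for $2\le k\le c_\calD$ (all such points lie in $\calD$ by the Ferrers property), and check their linear independence: in any $\QQ$-linear relation among them, reading off the coordinate of the $i$-th basis vector of the first block for $i\ge 2$ forces the coefficient of $\mathbf a_{i,1,1}$ to vanish, reading off the coordinates in the second and third blocks forces the coefficients of the $\mathbf a_{1,j,1}$ and $\mathbf a_{1,1,k}$ to vanish, and then the remaining coefficient of $\mathbf a_{1,1,1}$ vanishes as well. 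Hence $\rank L=a_\calD+b_\calD+c_\calD-2$, which is the claimed dimension.

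There is no real obstacle here; the content is a rank computation. The only things requiring care are that the Ferrers hypothesis is exactly what guarantees the coordinate-axis points are present (so that the lower bound goes through) and that it forces $R$ to have precisely $a_\calD+b_\calD+c_\calD$ variables. Note that the formula holds for an arbitrary three-dimensional Ferrers diagram, with no use of the projection property; under that stronger hypothesis it is moreover consistent with the equality $\dim\calF(\calD)=\dim\Delta(\calD)+1$ that follows later from \Cref{DimI2}, Gr\"obner degeneration, and the identification of $\bfI_2(\calD)$ with $J_\calD$.
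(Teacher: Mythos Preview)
Your proof is correct and takes essentially the same approach as the paper: both use the coordinate-axis points $(i,1,1)$, $(1,j,1)$, $(1,1,k)$ furnished by the Ferrers hypothesis and the standard identification of the dimension of a toric ring with the rank of its exponent lattice (equivalently, the transcendence degree over $\KK$). The paper simply asserts that the corresponding monomials form a transcendence basis and cites \cite[Theorem~A.16]{MR1251956}; your version spells out the same computation in lattice terms, supplying the upper bound via the two obvious linear relations and the lower bound via linear independence of the same $a_{\calD}+b_{\calD}+c_{\calD}-2$ exponent vectors.
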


\begin{proof}
    One can check that the images of 
    \begin{align*}
        \{ T_{1,1,1},T_{2,1,1},\dots,T_{a_{\calD},1,1},T_{1,2,1},T_{1,3,1},
        \dots,T_{1,b_{\calD},1},T_{1,1,2},T_{1,1,3},\dots,T_{1,1,c_{\calD}}\}
    \end{align*}
form a transcendental basis of the domain $\calF(\calD)$ over $\KK$. Now, we
may apply \cite[Theorem A.16]{MR1251956}.
\end{proof}


Now, at the end of this section, we advertise the attacking tactics for
\Cref{VD-InductionOrder} and \Cref{I2prime}. Let $\calD$ be a three-dimensional
Ferrers diagram.  In the proof of those results, we need to decompose the
diagram $\calD$ in a rather involved way. Because of this, take arbitrary
$\bdu=(1,j_0,k_0)\in \calD$ with 
\[
    \alpha=a_{\calD}(\bdu),\quad 
    \beta=b_{\calD}(\bdu)\quad \text{and} \quad
    \gamma=c_{\calD}(\bdu)
\]
as defined before \Cref{3F}, we divide $\calD$ into the following six zones:
\begin{align*}
    \calZ_1(\calD,\bdu)\coloneqq &\Set{(i,j,k)\in \calD : 1\le j\le j_0 \text{ and } k>\gamma},\\
    \calZ_2(\calD,\bdu)\coloneqq &\Set{(i,j,k)\in \calD : 1\le j\le j_0 \text{ and }k_0<k\le \gamma},\\ 
    \calZ_3(\calD,\bdu)\coloneqq &\Set{(i,j,k)\in \calD : 1\le j\le j_0 \text{ and }1\le k\le k_0},\\ 
    \calZ_4(\calD,\bdu)\coloneqq &\Set{(i,j,k)\in \calD : j_0< j\le \beta \text{ and }k_0<k\le \gamma},\\ 
    \calZ_5(\calD,\bdu)\coloneqq &\Set{(i,j,k)\in \calD : j_0< j\le \beta \text{ and }1\le k\le k_0},\\ 
    \calZ_6(\calD,\bdu)\coloneqq &\Set{(i,j,k)\in \calD : j>\beta \text{ and }1\le k< k_0}.  
\end{align*}
It is clear that $\calD$ is the disjoint union of the above six zones. In the subsequent discussion, they will be called $\calZ$-zones with respect to $\calD$ and $\bdu$. We will omit some of the parameters, if they are clear from the context. \Cref{Zone} gives the idea of the division of $\calD$ with respect to these zones.

\begin{figure}[h] 
\begin{center}
    \scalebox{1.4}{
\begin{tikzpicture}[thick, scale=0.5, every node/.style={scale=0.6}]
%
%
%
%
%
%
%

\draw [line width=1.2pt] (0,0)--(7,0) node [right]{$y$};
\draw [line width=1.2pt] (0,0)--(0,6) node [above]{$z$};

\draw [fill=gray!50, very thin] (2.5,2.5)--(2,2.5)--(2,2)--(2.5,2)--cycle; 
\node [above right] at (4,5) {$(j_0,k_0)$}; 
\draw [->, thin] (2.25,2.25) to [bend left] (4,5);

\path (0,0)--(0,4)  node [below left]{$\gamma$};
\path (0,-0.2)--(5,-0.2) node [below left]{$\beta$};

\draw [line width=1.2pt] (0,4)--(2,4)--(2,5)--(1.5,5)--(1.5,5.5)--(0.5,5.5)--(0.5,6)--(0,6)--cycle;
\node [right] at (0.5,4.7) {$\mathcal{Z}_1$};

\draw [line width=1.2pt] (0,2.5)--(2.5,2.5)--(2.5,4)--(0,4)--cycle;
\node [right] at (0.5,3.3) {$\mathcal{Z}_2$};

\draw [line width=1.2pt] (0,0)--(2.5,0)--(2.5,2.5)--(0,2.5)--cycle;
\node [right] at (0.5,1.2) {$\mathcal{Z}_3$};

\draw [line width=1.2pt] (2.5,2.5)--(5,2.5)--(5,3.5)--(4,3.5)--(4,4)--(2.5,4)--cycle;
\node [right] at (3.2,3.3) {$\mathcal{Z}_4$};

\draw [line width=1.2pt] (2.5,0)--(5,0)--(5,2.5)--(2.5,2.5)--cycle;
\node [right] at (3.2,1.2) {$\mathcal{Z}_5$};

\draw [line width=1.2pt]  (5,0)--(7,0)--(7,1)--(6.5,1)--(6.5,2)--(5,2)--cycle;
\node [right] at (5.2,1.2) {$\mathcal{Z}_6$};
\end{tikzpicture}}
\end{center}
\caption{$\calZ$-zones with respect to $\calD$ and $\bdu$}\label{Zone}
\end{figure}

Suppose that $a_{\calD}\ge 2$. We adopt the following induction process for considering both the vertex-decomposable property of $\Delta(\calD)$ and the primeness of $\bfI_2(\calD)$.  Consider the symmetry operation $\calS: \ZZ_+^3\to \ZZ_+^3$ by sending $(i,j,k)$ to $(i,k,j)$.  The common induction process is as follows:

\begin{enumerate}[leftmargin=3cm]
    \item [\textbf{First stage}] We remove lexicographically the initial
        points within 
        \[
            \calC\coloneqq \Set{(1,j,k)\in \calD: k\le c_{\calD^{\ge 2}}}.
        \]

    \item [\textbf{Second stage}] After we remove above points in the first stage, we do a flip by $\calS$.  The current flipped diagram also comes from $\calS(\calD)$ by removing lexicographically initial points in the $x=1$ layer:
        \[
            \calS(\calD\setminus\calC)=(\calS(\calD))_{\bdu}
        \]
       where $\bdu=(1,c_{\calD^{\ge 2}}+1,1)$. We remove lexicographically the initial points in the $x=1$ layer of the remaining flipped diagram in this stage.
\end{enumerate}

The above induction process leads to a total order on the points in the $x=1$ layer of $\calD$, and we will refer to it as the \emph{induction order}. Figure \ref{Fig:InductionOrder} gives an idea how this proceeds. The first point is $\circ=(1,1,1)$. When $c_{\calD^{\ge 2}}<c_{\calD}$, the last point is $\bullet=(1,b_{\calD}( (1,1,c_{\calD})),c_{\calD})$ of $\calD$. Otherwise, $c_{\calD^{\ge 2}}=c_{\calD}$ with the second stage disappears and the last point is $(1,b_{\calD},c_{\calD}( (1,b_{\calD},1)))$ of $\calD$.  Meanwhile, in Figure \ref{Fig:InductionOrder}, $\filledstar$ denotes the last point in the first stage while $\smallstar$ denotes the first point in the second stage.  We may also extend the induction order to the whole $\calD$ by ordering the points in $\calD^{\ge 2}$ in a suitable way and put them after the points in $\calD^1$. But this does not matter since we overall prove by induction on $a_{\calD}$.

\begin{figure}[h] 
\begin{center}
    \scalebox{1.4}{
\begin{tikzpicture}[scale=0.6, every node/.style={scale=0.6}]

\fill [gray!30] (0,3.5)--(4,3.5)--(4,4)--(0,4)--cycle;

\draw [line width=1.2pt]  (0,0)--(7,0)--(7,1)--(6.5,1)--(6.5,2)--(5,2)--(5,2.5)--(5,3.5)--(4,3.5)--(4,4)--(2.5,4)--(2,4)--(2,5)--(1.5,5)--(1.5,5.5)--(1,5.5)--(1,6)--(0,6)--cycle;

\coordinate[label=right:{$y$}] (y) at (7,0);
\coordinate[label=above:{$z$}] (z) at (0,6);
\coordinate[label=below left:{$c_{\mathcal{D}^{\ge 2}}$}] (gamma) at (0,4);
\coordinate[label={$\circ$}] (O1) at (0.25,0.05);
\coordinate[label={$\filledstar$}] (T2) at (6.75,0.5);
\coordinate[label={$\bullet$}] (T1) at (0.75,5.5);
\coordinate[label={$\smallstar$}] (O2) at (0.25,3.95);

\draw [->] (0.25,0.45) -- (0.25,3.75);
\draw[ thin,  dotted,->-=.8] (0.25,3.75)--(0.5,3.75)--(0.5,0.25)--(0.75,0.25);
\draw [->] (0.75,0.25) -- (0.75,3.75);
\draw[ thin,  dotted,->-=.8] (0.75,3.75)--(1,3.75)--(1,0.25)--(1.25,0.25);
\draw [->] (1.25,0.25) -- (1.25,3.75);
\draw[ thin,  dotted,->-=.8] (1.25,3.75)--(1.5,3.75)--(1.5,0.25)--(1.75,0.25);
\draw [->] (1.75,0.25) -- (1.75,3.75);
\draw[ thin,  dotted,->-=.8] (1.75,3.75)--(2,3.75)--(2,0.25)--(2.25,0.25);
\draw [->] (2.25,0.25) -- (2.25,3.75);
\draw[ thin,  dotted,->-=.8] (2.25,3.75)--(2.5,3.75)--(2.5,0.25)--(2.75,0.25);
\draw [->] (2.75,0.25) -- (2.75,3.75);
\draw[ thin,  dotted,->-=.8] (2.75,3.75)--(3,3.75)--(3,0.25)--(3.25,0.25);
\draw [->] (3.25,0.25) -- (3.25,3.75);
\draw[ thin,  dotted,->-=.8] (3.25,3.75)--(3.5,3.75)--(3.5,0.25)--(3.75,0.25);
\draw [->] (3.75,0.25) -- (3.75,3.75);
\draw[ thin,  dotted,->-=.8] (3.75,3.75)--(4,3.75)--(4,0.25)--(4.25,0.25);
\draw [->] (4.25,0.25) -- (4.25,3.25);
\draw[ thin,  dotted,->-=.8] (4.25,3.25)--(4.5,3.25)--(4.5,0.25)--(4.75,0.25);
\draw [->] (4.75,0.25) -- (4.75,3.25);
\draw[ thin,  dotted,->-=.8] (4.75,3.25)--(5,3.25)--(5,0.25)--(5.25,0.25);
\draw [->] (5.25,0.25) -- (5.25,1.75);
\draw[ thin,  dotted,->-=.8] (5.25,1.75)--(5.5,1.75)--(5.5,0.25)--(5.75,0.25);
\draw [->] (5.75,0.25) -- (5.75,1.75);
\draw[ thin,  dotted,->-=.8] (5.75,1.75)--(6,1.75)--(6,0.25)--(6.25,0.25);
\draw [->] (6.25,0.25) -- (6.25,1.75);
\draw[ thin,  dotted,->-=.8] (6.25,1.75)--(6.5,1.75)--(6.5,0.25)--(6.75,0.25);
\draw [->] (6.75,0.25) -- (6.75,0.55);

\draw[->] (0.4,4.25)--(1.75,4.25);
\draw[thin, dotted,->-=.8] (1.75,4.25)--(1.75,4.5)--(0.25,4.5)--(0.25,4.75);
\draw[->] (0.25,4.75)--(1.75,4.75);
\draw[thin, dotted,->-=.8] (1.75,4.75)--(1.75,5)--(0.25,5)--(0.25,5.25);
\draw[->] (0.25,5.25)--(1.25,5.25);
\draw[thin, dotted] (1.25,5.25)--(1.25,5.5)--(0.25,5.5)--(0.25,5.75);
\draw[->](0.25,5.75)--(0.6,5.75);

\draw [decorate,decoration={brace, mirror}] (7.5,0.25) - - node[right] {\ 1st Stage} (7.5,3.75);
\draw [decorate,decoration={brace, mirror}] (7.5,4.25) - - node[right] {\ 2nd Stage} (7.5,5.75);
\end{tikzpicture}
}
\end{center}
\caption{Induction Order}\label{Fig:InductionOrder}
\end{figure}

\begin{Remark}
    \label{induction-order}
    \begin{enumerate}[1]
        \item The induction order is a quasi-lexicographic order.
        \item  Suppose that $\bdu=(1,j_0,k_0)$ belongs to the first stage.
            Since now $k_0\le c_{\calD^{\ge 2}}$, we have $(2,1,k_0)\in
            \calD$. For $\beta=b_{\calD}(\bdu)$, we have
            $(1,\beta+1,k_0)\notin \calD$ by definition.  Hence if $\calD$
            satisfies the projection property, then $(2,\beta+1,1)\notin \calD$.
            This implies that the zone $\calZ_6=\calZ_6^1$. 
        \item Let $\calG$ be a subdiagram of $\calD$. Under the
            operation $\calS$, the Ferrers property and the projection property of
            $\calG$ are preserved.  We extend this operation to
            $\KK[\bfT_{\calG}]$.  Obviously,
            $\calS(\bfI_2(\calG))=\bfI_2(\calS(\calG))$.  If $\bfI_2(\calG)$ is
            lexicographically quadratic, then $\calS$ also preserves the
            initial ideal with respect to the lexicographic order:
            \[
                \calS(\ini(\bfI_2({\calG})))=\ini(\bfI_2({\calS(\calG)})).
            \]
            To see this, we notice that $T_{\bdu}T_{\bdv}$ is the
            leading term of $f\coloneqq T_{\bdu}T_{\bdv}-T_{\bdu'}T_{\bdv'}\in
            \bfI_2(\calG)$ if and only if $T_{\calS(\bdu)}T_{\calS(\bdv)}$ is
            the leading term of $\calS(f)$. We also need the fact that $\calS$
            preserves the graded Hilbert function, since $\ini(\bfI_2(\calG))$
            and $\bfI_2(\calG)$ share the same graded Hilbert function as well
            as so for $\ini(\bfI_2(\calS(\calG)))$ and $\bfI_2(\calS(\calG))$.
    \end{enumerate}
\end{Remark}

\section{Cohen--Macaulayness}

The purpose of this section is to establish the Cohen--Macaulayness of the
ideal $\bfI_2(\calD)$. As advertised in the previous section, we will show that
$\Delta(\calD)$ is pure vertex-decomposable. We give readers the road map of
the proof of \Cref{VD-InductionOrder} here, because the proof is very involved.
We apply induction with respect to the order introduced in the previous section
and this is natural because the vertex-decomposable property is by definition
an induction property. During the proof, we especially focus on the deletion and
the link of the complex with respect to a particular vertex.  We pay close
attention to the generators of the link complex and its dimension because this
is where we apply induction hypothesis. Moreover, we use a lot of restriction
complexes, in the form of $\Delta(\calD,\calG)$, during the proof so that we
can reduce to a smaller case.

\begin{Theorem}
    \label{VD-InductionOrder}
    Assume that $\calD$ is a three-dimensional Ferrers diagram which
    satisfies the projection property. Then $\Delta(\calD)$ is pure
    vertex-decomposable of dimension $a_{\calD}+b_{\calD}+c_{\calD}-3$.
\end{Theorem}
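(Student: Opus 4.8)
The plan is to prove the statement by induction on the essential length $a_{\calD}$. When $a_{\calD}=1$ the diagram is essentially two-dimensional, the projection property is vacuous, $\bfI_2(\calD)$ is still lexicographically quadratic by \Cref{I2lq}, and the assertion is classical (one may also obtain it by running the peeling procedure below inside the plane, with innermost base case $b_{\calD}=1$ where $\bfI_2(\calD)=0$ and $\Delta(\calD)$ is a simplex). So assume $a_{\calD}\ge 2$. Then $\calD^{\ge 2}$, with its first coordinate shifted down by one, is again a three-dimensional Ferrers diagram satisfying the projection property and of essential length $a_{\calD}-1$, so by the induction hypothesis $\Delta(\calD^{\ge 2})$ is pure vertex-decomposable of dimension $(a_{\calD}-1)+b_{\calD^{\ge 2}}+c_{\calD^{\ge 2}}-3$; by \Cref{truncation-formula} this complex is the restriction $\Delta(\calD,\calD^{\ge 2})$. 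Enumerating the $x=1$ layer in the induction order as $\bdu_1\prec\dotsb\prec\bdu_N$ gives the chain $\calD=\calA_{\bdu_1}\supset\calA_{\bdu_1}^{+}=\calA_{\bdu_2}\supset\dotsb\supset\calA_{\bdu_N}^{+}=\calD^{\ge 2}$, where by \Cref{restriction-QLO} each $\Delta(\calA_{\bdu_t})$ equals the restriction $\Delta(\calD,\calA_{\bdu_t})$.

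Next I would run a downward induction on $t$, showing that every $\Delta(\calA_{\bdu_t})$ is pure vertex-decomposable; passing from $\calA_{\bdu_t}^{+}$ to $\calA_{\bdu_t}$ reinserts the single vertex $T_{\bdu_t}$. If $\bdu_t$ is a phantom point, then by \Cref{equivalent-definition} $\Delta(\calA_{\bdu_t})$ is the cone with apex $T_{\bdu_t}$ over $\Delta(\calA_{\bdu_t}^{+})$, hence pure, vertex-decomposable by \Cref{VD-cone}, and of dimension one larger. If $\bdu_t$ is a normal point, the goal is to exhibit $T_{\bdu_t}$ as a shedding vertex: the deletion $\Delta(\calA_{\bdu_t})\setminus T_{\bdu_t}$ equals $\Delta(\calA_{\bdu_t}^{+})$, which is pure vertex-decomposable by the inner induction hypothesis and, by \Cref{HeightFormula}(ii), has the same dimension as $\Delta(\calA_{\bdu_t})$, so everything reduces to the link $\lk_{\Delta(\calA_{\bdu_t})}(T_{\bdu_t})$.

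Since $\ini(\bfI_2(\calA_{\bdu_t}))$ is generated in degree two, this link is the restriction of $\Delta(\calA_{\bdu_t}^{+})$ to the set of $\bdv\in\calA_{\bdu_t}^{+}$ that cannot switch a coordinate with $\bdu_t$ inside $\calA_{\bdu_t}$. Writing $\bdu_t=(1,j_0,k_0)$ and splitting $\calD$ into the six $\calZ$-zones of \Cref{Zone} attached to $\bdu_t$, I would read off exactly which zones contribute to that set, and then — after possibly applying the flip $\calS$ and distinguishing whether $\bdu_t$ lies in the first or second stage of the induction order (here \Cref{induction-order} enters, in particular the fact that in the first stage $\calZ_6$ lies entirely in the $x=1$ layer) — identify the link with the Stanley--Reisner complex of a smaller three-dimensional Ferrers diagram (or a join of such) still satisfying the projection property, to which the induction hypothesis or the inner induction applies, making the link pure vertex-decomposable; \Cref{DimI2} then pins its dimension at $\dim\Delta(\calA_{\bdu_t})-1$, which forces $\Delta(\calA_{\bdu_t})$ to be pure and completes the shedding verification. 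Carrying the inner induction up to $t=1$ shows $\Delta(\calD)=\Delta(\calA_{\bdu_1})$ is pure vertex-decomposable; since each phantom point raised the dimension by one while each normal point left it fixed, \eqref{x1-phantonpoints-number} gives
\[
\dim\Delta(\calD)=\bigl((a_{\calD}-1)+b_{\calD^{\ge 2}}+c_{\calD^{\ge 2}}-3\bigr)+\bigl((b_{\calD}-b_{\calD^{\ge 2}})+(c_{\calD}-c_{\calD^{\ge 2}})+1\bigr)=a_{\calD}+b_{\calD}+c_{\calD}-3,
\]
in agreement with \Cref{DimI2}.

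The hard part will be the link computation for a normal point: I will have to determine, case by case in terms of the $\calZ$-zones and of the position of $\bdu_t$ within the two-stage order, precisely which variables persist in $\lk_{\Delta(\calA_{\bdu_t})}(T_{\bdu_t})$, and then see the resulting restriction complex as $\Delta$ of a smaller Ferrers diagram that still enjoys the projection property, so that the induction closes; the projection property is used exactly to guarantee that these sub-configurations again have the right shape and that the dimension drops by precisely one. By contrast, the other ingredients — the restriction formulas of \Cref{truncation-formula} and \Cref{restriction-QLO}, the cone reduction \Cref{VD-cone}, the normal/phantom dichotomy and dimension bookkeeping of \Cref{HeightFormula}, and the count \eqref{x1-phantonpoints-number} — amount to routine manipulations.
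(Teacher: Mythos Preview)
Your proposal follows essentially the same architecture as the paper's proof: outer induction on $a_{\calD}$, inner induction peeling the $x=1$ layer along the induction order, the phantom/normal dichotomy with the cone reduction at phantom points, and the shedding-vertex verification at normal points via a link analysis organised by the $\calZ$-zones and the two-stage split with the flip $\calS$. Your final dimension count via \eqref{x1-phantonpoints-number} is also the paper's.

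The one place where your outline is thinner than what is actually required is the link step. The link (after discarding the invisible vertices in $\calZ_4\cup\calZ_2^{\ge 2}\cup\calZ_5^{\ge 2}$) is not directly the complex of a Ferrers diagram: the paper first splits off a simplex to pass from $\calE$ to $\calH$, and then identifies $\calH$ with $\widetilde{\calA}_{\bdu}^{+}$ for an auxiliary Ferrers diagram $\widetilde{\calD}$ that still has the projection property. The induction that closes is therefore not the outer one on $a_{\calD}$ (since $a_{\widetilde{\calD}}$ need not drop) nor simply your inner one on $t$ for the fixed $\calD$, but a simultaneous induction over all pairs $(\calD,\bdu)$; the paper invokes it as ``a strictly smaller case''. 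Likewise, \Cref{DimI2} alone does not pin the link dimension at $\dim\Delta(\calA_{\bdu_t})-1$: because $\calH$ is an $\calA$-type subdiagram rather than a full Ferrers diagram, the dimension check amounts to the phantom-point identities \eqref{last-check} and \eqref{last-check-Dprime}, which the paper isolates as separate lemmas and verifies by an explicit case analysis on border points. These are exactly the ``hard parts'' you flagged, so your plan is sound, but be aware that the bookkeeping there is more delicate than a single appeal to \Cref{DimI2}.
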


\begin{proof}
   We prove by the induction on $a_{\calD}$. When $a_{\calD}=1$, this can be reduced to the two-dimensional case in \cite[Theorem 3.3]{CNPY}. Now, we assume that $a_{\calD}\ge 2$. To finish the proof, we will proceed by removing the points in the $x=1$ layer according to the induction order given in the previous section.  Write $\calC\coloneqq \Set{(1,j,k)\in \calD:k\le c_{\calD^{\ge 2}}}$.

   \bigskip
   \textbf{First stage} 

   We study the subdiagram $\calA_{\bdu}$ for each $\bdu\in \calC$, which is the subdiagram of $\calD$ by removing points preceeding $\bdu$ with respect to the induction order.  By
   induction, \Cref{restriction-QLO} and \Cref{equivalent-definition},
   $\Delta(\calD,\calA_{\bdu})=\Delta(\calA_{\bdu})$ is expected to have
   dimension
   \begin{equation}
       a_{\calD^{\ge 2}}+b_{\calD^{\ge 2}}+c_{\calD^{\ge 2}}-3+ \left|
           \Phan(\calD)\cap \calA_{\bdu}  \right|. \label{dim-Au}
   \end{equation}
   We will show that $\Delta(\calA_{\bdu})$ is pure vertex-decomposable.

   The minimal case is when we remove all the points in $\calC$. We will deal
   it in the second stage.  Other than this minimal case, when $\bdu$ is a
   phantom point, this case is clear by \Cref{VD-cone} and
   \Cref{equivalent-definition}.  Thus, we may assume that $\bdu$ is a normal
   point.  Say that $\bdu=(1,j_0,k_0)$ with 
   \[
       \alpha=a_{\calD}(\bdu),\quad 
       \beta=b_{\calD}(\bdu)\quad \text{and} \quad
       \gamma=c_{\calD}(\bdu).  
   \]
   We only need to check that $T_{\bdu}$ is the expected shedding vertex for
   the simplicial complex. Note that from our induction hypothesis, the
   deletion complex $\Delta(\calA_{\bdu})\setminus
   T_{\bdu}=\Delta(\calA_{\bdu}^+)$ is pure vertex-decomposable of the expected
   dimension given by \eqref{dim-Au}.  As for the link complex
   $\link_{\Delta(\calA_{\bdu})}(T_{\bdu})$, its Stanley--Reisner ideal in
   $\KK[\bfT_{\calA_{\bdu}^+}]$ is the colon ideal
   $I_{\Delta(\calA_{\bdu})}:T_{\bdu}$. We need to investigate this colon ideal
   in detail.  Since the monomial generators of $I_{\Delta(\calA_{\bdu})}$ are
   all quadratic, the monomial generators of the colon ideal
   $I_{\Delta(\calA_{\bdu})}:T_{\bdu}$ are either quadratic or linear.

   As observed in \Cref{restriction-QLO}, $\bdu$ satisfies the ``leading
   monomial'' condition in \Cref{I2-inheritance}.  Therefore, $T_{\bdu'}\in
   I_{\Delta(\calA_{\bdu})}:T_{\bdu}$ if and only if $T_{\bdu}$ and $T_{\bdu'}$
   can switch coordinates.  Now, the lattice points in $\calA_{\bdu}$ that
   contribute the linear minimal generators of
   $I_{\Delta(\calA_{\bdu})}:T_{\bdu}$ come from the following ``linear''
   regions:
   \begin{align}
       \notag &\Set{(1,j,k)\in \calD: j> j_0 \text{ and } k>k_0}             & \text{ by switching the $y$-coordinates,}\\ 
       \notag &\Set{(i,j,k)\in \calD: i\ge 2  \text{ and } j_0<j\le \beta}   & \text{ by switching the $y$-coordinates,}\\ 
       \notag &\Set{(i,j,k)\in \calD: i\ge 2  \text{ and } k_0<k\le \gamma}  & \text{ by switching the $z$-coordinates,}\\ 
       \tag{$\dag$} &\Set{(i,j,k)\in \calD: j>\beta \text{ and } 2\le i\le \alpha}&\text{ by switching the $x$-coordinates.}
   \end{align}
   However, since $k_0\le c_{\calD^{\ge 2}}$, $\calZ_6=\calZ_6^1$ by the
   projection property. In particular, the last region $(\dag)$ is indeed
   empty.

   Consequently, we see that
   \begin{align*}
       I_{\Delta(\calA_{\bdu})}:T_{\bdu}= (T_{\bdu'}:
       \bdu'\in \calZ_4\cup\calZ_2^{\ge 2}\cup \calZ_5^{\ge
           2})+I_{\Delta(\calA_{\bdu})}\cap \KK[T_{\bdu'}:\bdu'\in \calE].
   \end{align*}
   Here, 
   \[
       \calE\coloneqq \calA_{\bdu}^{+}\setminus (\calZ_4\cup\calZ_2^{\ge 2}\cup
       \calZ_5^{\ge 2}).
   \] 
   So far, $\Delta(\calD,\calE)$ is the restriction complex of the link complex
   $\link_{\Delta(\calD,\calA_{\bdu})}(T_{\bdu})$ by removing the ``invisible
   vertices'' in $(T_{\bdu'}: \bdu'\in \calZ_4\cup\calZ_2^{\ge 2}\cup
   \calZ_5^{\ge 2})$.  Notice that
   $\Delta(\calD,\calA_{\bdu})=\Delta(\calA_{\bdu})$ and $\bdu$ is a normal
   point. As we hope that $T_{\bdu}$ is a shedding vertex at this step, we are
   reduced to show that $\Delta(\calD,\calE)$ is a pure vertex-decomposable
   complex of dimension
   \begin{equation}
       \dim \Delta(\calD,\calE)=\dim \Delta(\calA_{\bdu})-1.
       \label{E-Au}
   \end{equation}

   Notice that for each $(1,j,k)\in \calE$ with $1\le j\le j_0$, we have
   \begin{itemize}
       \item $j=j_0$ and $k_0< k\le \gamma$, or
       \item $j<j_0$ and $k>c_{\calD^{\ge 2}}$.  
   \end{itemize}
   Let 
   \begin{align*}
       \calH\coloneqq &\calE\setminus \Set{(1,j_0,k): k_0<k\le
           \min(\gamma,c_{\calD^{\ge 2}})}\\
       =&\calZ_1^{\ge 2} \cup \calZ_3^{\ge 2} \cup \calZ_5^1\cup \calZ_6^1
       \cup \Set{(1,j,k)\in \calD: j\le j_0 \text{ and } k>c_{\calD^{\ge 2}}}.
   \end{align*}
   For $\bar{\bdu}\in \Set{(1,j_0,k):k_0<k\le \min(\gamma,c_{\calD^{\ge 2}})}$,
   we can similarly define $\bar{\calH}$. Notice that $\calH\subseteq
   \bar{\calH}$.  This implies that for any $\bdv\in \calH$, one has
   $T_{\bar{\bdu}}T_{\bdv}\notin I_{\Delta(\calA_{\bdu})}$. Therefore,
   $\Delta(\calD,\calE)$ is the join of $\Delta(\calD,\calH)$ with a simplex of
   dimension $\min(\gamma,c_{\calD^{\ge 2}})-k_0-1$. Consequently
   \begin{equation}
       \dim(\Delta(\calD,\calH))=
       \dim(\Delta(\calD,\calE))-(\min(\gamma,c_{\calD^{\ge 2}})-k_0), 
       \label{EH}
   \end{equation}
   and by \Cref{I2-inheritance-application},
   \begin{align*}
       \gens(I_{\Delta(\calD)}\cap \KK[T_{\bdu'}:\bdu'\in \calE])=
       \gens(I_{\Delta(\calA_{\bdu})}\cap \KK[T_{\bdu'}:\bdu'\in \calE])\\
       =\gens(I_{\Delta(\calD,\calE)})=\gens(I_{\Delta(\calD,\calH)}).
   \end{align*}
   To summarize, by combining \eqref{dim-Au}, \eqref{E-Au} and \eqref{EH}, we
   are expecting
   \begin{equation}
       \dim(\Delta(\calD,\calH)) = a_{\calD^{\ge 2}}+b_{\calD^{\ge
               2}}+c_{\calD^{\ge 2}}-3+ \left| \Phan(\calD)\cap \calA_{\bdu}
       \right| -(\min(\gamma,c_{\calD^{\ge 2}})-k_0+1).
       \label{dim-H-InductionOrder}
   \end{equation}
   Now, it suffices to show that $\Delta(\calD,\calH)$ is pure
   vertex-decomposable of this expected dimension. We prove this in
   \Cref{calculation-lemma}.

   \bigskip
   \textbf{Second stage}

   After the first stage, we are dealing with the case where we removed all the points in $\calC$.  Now we flip $\calD$ to get $\calS(\calD)$, which will be written as $\calD'$ for simplicity.  Notice that for the remaining points in $(\calD')^1$, the induced induction order is exactly the lexicographic order. Now, as before, we will write $\calD'_{\bdu}$ for the restriction diagram from $\calD'$ by removing those points preceding $\bdu$ lexicographically in $\calD'$. Using this notation, the current case is $\calS(\calD\setminus\calC)=\calD'_{(1,c_{\calD^{\ge 2}}+1,1)}$, since by flipping the diagram, we switch the $y$ and $z$ coordinates.  By \Cref{induction-order},
   \[
       \Delta(\calS(\calD),\calS(\calD\setminus\calC))\cong \Delta(\calD'_{(1,c_{\calD^{\ge 2}}+1,1)}).
   \]
   In the following, we will remove the points in the $x=1$ layer of $\calD'_{(1,c_{\calD^{\ge 2}}+1,1)}$ lexicographically, and prove the corresponding complex is pure vertex-decomposable of expected dimension.

   The minimal case is when we remove all the $x=1$ layer. Now, we have $(\calD')^{\ge 2}=\calS(\calD^{\ge 2})$. By induction, $\Delta( (\calD')^{\ge 2})$ is pure vertex-decomposable of dimension 
   \[
       a_{\calD^{\ge 2}}+b_{\calD^{\ge 2}}+c_{\calD^{\ge 2}}-3.
   \]

   Now, consider a general $\bdu=(1,j_0,k_0)\in \calD'_{(1,c_{\calD^{\ge 2}}+1,1)}$.  By induction,  \Cref{restriction-QLO} and \Cref{equivalent-definition}, $\Delta(\calD'_{\bdu})$ has dimension
   \begin{equation}
       a_{\calD'^{\ge 2}}+b_{\calD'^{\ge 2}}+c_{\calD'^{\ge 2}}-3+ \left|
           \Phan(\calD')\cap \calD'_{\bdu}  \right|. 
       \label{dim-Du}
   \end{equation}
   Notice that $\Phan(\calD')\cap \calD'_{\bdu}$ coincides with $\Phan(\calD'_{\bdu})$ by \Cref{I2-inheritance-application}.  We will show that $\Delta(\calD'_{\bdu})$ is pure vertex-decomposable.

   For general $\calD'_{\bdu}$, when $\bdu$ is a phantom point, this case is clear by \Cref{equivalent-definition}.  Thus, we may assume that $\bdu$ is not a phantom point.  Say that  
   \[
       \alpha\coloneqq a_{\calD'}(\bdu),\quad 
       \beta\coloneqq b_{\calD'}(\bdu)\quad \text{and} \quad
       \gamma\coloneqq c_{\calD'}(\bdu).  
   \]
   We will check that $T_{\bdu}$ is the expected shedding vertex for the simplicial complex. For this, we prove similarly as in the first stage.

   Notice that, in the first stage, the ``ceiling restriction'' of choosing points $(1,j,k)$ with $k\le c_{\calD^{\ge 2}}$ is mainly used to ensure that $\calZ_6(\calD)=\calZ_6^1(\calD)$.  In the current case we will automatically get $\calZ_6(\calD')=\calZ_6^1(\calD')$. In the following, the $\calZ$-zones are with respect to $\calD'$.

   Now, as in the first stage, it suffices to show that the link complex $\link_{\Delta(\calD'_{\bdu})}(T_{\bdu})$ is pure vertex-decomposable of dimension one less. By a similar screening, we see the colon ideal
   \begin{align*}
       I_{\Delta(\calD'_{\bdu})}:T_{\bdu}= (T_{\bdu'}: \bdu'\in
       \calZ_4\cup\calZ_2^{\ge 2}\cup \calZ_5^{\ge
           2})+I_{\Delta(\calD'_{\bdu})}\cap \KK[T_{\bdu'}:\bdu'\in \calE'].
   \end{align*}
   Here, for $(\calD_{\bdu}')^+:=\calD'_{\bdu}\setminus \bdu$, we have 
   \[
       \calE'\coloneqq (\calD_{\bdu}')^+\setminus (\calZ_4\cup\calZ_2^{\ge 2}\cup
   \calZ_5^{\ge 2}).
   \]
   Notice that for each $(1,j,k)\in \calE'$ with $1\le j\le j_0$, we have $j=j_0$ and $k_0< k\le \gamma$.  Let $\calH'\coloneqq \calE'\setminus \calZ_2^1$.  Similarly, we see that $\Delta(\calD',\calE')$ is the join of $\Delta(\calD',\calH')$ with a simplex of dimension 
   \begin{equation*}
       \gamma-k_0-1,
   \end{equation*}
   and by \Cref{I2-inheritance-application},
   \begin{align*}
       &\gens(I_{\Delta(\calD')}\cap \KK[T_{\bdu'}:\bdu'\in \calE'])=
       \gens(I_{\Delta(\calD'_{\bdu})}\cap \KK[T_{\bdu'}:\bdu'\in \calE'])\\
       =&\gens(I_{\Delta(\calD',\calE')})=\gens(I_{\Delta(\calD',\calH')}).
   \end{align*}
   By the previous discussion, we are similarly anticipating
   \begin{align}
       \notag \dim(\Delta(\calD',\calH'))=& \dim(\Delta(\calD',\calE'))-(\gamma-k_0) =\dim(\Delta (\calD'_{\bdu}))-(\gamma-k_0+1)\\
       \label{dim-H} =& a_{\calD'^{\ge 2}}+b_{\calD'^{\ge 2}}+c_{\calD'^{\ge 2}}-3+ \left| \Phan(\calD')\cap \calD'_{\bdu}  \right| -(\gamma-k_0+1).
   \end{align}
   Now, it suffices to show that $\Delta(\calD',\calH')$ is pure
   vertex-decomposable of this expected dimension. We prove this in
   \Cref{calculation-lemma-Dprime}.
\end{proof}

\begin{Lemma}
    \label{restriction-DH}
    Using the notation in the proof of \Cref{VD-InductionOrder}, we have  
    \[
        \bfI_2(\calD)\cap \KK[\bfT_{\calH}]=\bfI_2(\calH) \quad \text{ and }
        \quad  \gens(\ini(\bfI_2(\calD)))\cap
        \KK[\bfT_{\calH}]=\gens(\ini(\bfI_2(\calH))).  
    \]
    In particular, $\Delta(\calD,\calH)=\Delta(\calH)$ and $\bfI_2(\calH)$ is
    lexicographically quadratic.
\end{Lemma}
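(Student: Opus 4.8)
The plan is to deduce the restriction formulas for $\calH\subseteq\calD$ by threading $\calH$ through a short chain of intermediate subdiagrams, applying at each link either \Cref{restriction-QLO} or the Detaching/Leading Monomial Conditions of \Cref{I2-inheritance}, and then composing.

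First I would reduce everything to a statement about $\calA_{\bdu}^{+}$. Since $\calD$ satisfies the projection property, $\bfI_2(\calD)$ is lexicographically quadratic by \Cref{I2lq}. As $\bdu$ is a normal first-stage point and not the terminal (``minimal'') case, it has an immediate successor $\bdw\in\calD^1$ in the induction order with $\calA_{\bdw}=\calA_{\bdu}^{+}$; hence \Cref{restriction-QLO} gives
\[
\bfI_2(\calD)\cap\KK[\bfT_{\calA_{\bdu}^{+}}]=\bfI_2(\calA_{\bdu}^{+}),\qquad
\gens(\ini(\bfI_2(\calD)))\cap\KK[\bfT_{\calA_{\bdu}^{+}}]=\gens(\ini(\bfI_2(\calA_{\bdu}^{+}))),
\]
and $\bfI_2(\calA_{\bdu}^{+})$ is lexicographically quadratic. (Alternatively one applies the Leading Monomial Condition of \Cref{I2-inheritance} directly to $\calA_{\bdu}^{+}=\calA_{\bdu}\setminus\bdu$, exactly as in the proof of \Cref{restriction-QLO}.) Since $\KK[\bfT_{\calH}]\subseteq\KK[\bfT_{\calA_{\bdu}^{+}}]$, it now suffices to establish the two restriction formulas for $\calH\subseteq\calA_{\bdu}^{+}$ and compose. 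It is essential to have removed $\bdu$ first: while $\bdu$ is still present, the $y$-switch $T_{2,1,1}T_{1,2,1}-T_{2,2,1}T_{1,1,1}$ (taking $\bdu=(1,1,1)$) already violates the Detaching Condition for $\calH$ inside $\calA_{\bdu}$, because $(2,2,1)\in\calZ_5^{\ge 2}\setminus\calH$ while $(1,1,1)=\bdu$ is still in the ambient diagram.

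Next I would verify the Detaching Condition of \Cref{I2-inheritance} for $\calH\subseteq\calA_{\bdu}^{+}$, most conveniently in the two stages $\calH\subseteq\calE\subseteq\calA_{\bdu}^{+}$, where $\calA_{\bdu}^{+}\setminus\calE=\calZ_4\cup\calZ_2^{\ge 2}\cup\calZ_5^{\ge 2}$ and $\calE\setminus\calH=\{(1,j_0,k):k_0<k\le\min(\gamma,c_{\calD^{\ge 2}})\}\subseteq\calZ_2^1$. For the first stage one must show: if $\bdv_1,\bdv_2\in\calE$ and an $x$-, $y$- or $z$-switch carries them to $\bdv_1',\bdv_2'$ with both in $\calA_{\bdu}^{+}$, then neither output lies in $\calZ_4\cup\calZ_2^{\ge 2}\cup\calZ_5^{\ge 2}$. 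This is the technical heart: it is a finite case analysis over the type of switch and the $\calZ$-zones of $\bdv_1,\bdv_2$, in which the Ferrers property controls which triples actually belong to $\calD$ and the projection property — through the first-stage hypothesis $k_0\le c_{\calD^{\ge 2}}$, which yields $(2,1,k_0)\in\calD$ and $\calZ_6=\calZ_6^1$ — pins down the geometry of the zones. The mechanism, already visible in the colon ideal computation in the proof of \Cref{VD-InductionOrder}, is that a switch pushing one output point into the forbidden region necessarily lowers the complementary output point in the switched coordinate, forcing it either out of $\calD$ (for instance a $z$-coordinate exceeding $\gamma$ at a point $(1,j_0,\cdot)$) or strictly before $\bdu$ in the induction order, hence out of $\calA_{\bdu}^{+}$. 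For the second stage one checks that no switch of a pair from $\calH$ can produce a point of $\calE\setminus\calH$: such a point has the form $(1,j_0,k)$ with $k_0<k\le c_{\calD^{\ge 2}}$, whereas $\calH$ contains no point of $\calD$ with $i=1$ and $z$-coordinate in $(k_0,c_{\calD^{\ge 2}}]$, and no point whose $(j,k)$ equals $(j_0,k)$ with $k_0<k\le c_{\calD^{\ge 2}}$ (these would lie in $\calZ_2^1$, disjoint from $\calH$); a short pass through the three switch types finishes this.

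Finally, applying \Cref{I2-inheritance} to $\calE\subseteq\calA_{\bdu}^{+}$ and then to $\calH\subseteq\calE$ shows that $\bfI_2(\calE)$ and $\bfI_2(\calH)$ are lexicographically quadratic and gives $\bfI_2(\calA_{\bdu}^{+})\cap\KK[\bfT_{\calH}]=\bfI_2(\calH)$ together with $\gens(\ini(\bfI_2(\calA_{\bdu}^{+})))\cap\KK[\bfT_{\calH}]=\gens(\ini(\bfI_2(\calH)))$. Composing with the formulas of the first paragraph (via $\KK[\bfT_{\calH}]\subseteq\KK[\bfT_{\calE}]\subseteq\KK[\bfT_{\calA_{\bdu}^{+}}]\subseteq\KK[\bfT_{\calD}]$) yields $\bfI_2(\calD)\cap\KK[\bfT_{\calH}]=\bfI_2(\calH)$ and $\gens(\ini(\bfI_2(\calD)))\cap\KK[\bfT_{\calH}]=\gens(\ini(\bfI_2(\calH)))$. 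Since the Stanley--Reisner ideal of the restriction complex $\Delta(\calD,\calH)$ on $\{T_{\bdu}:\bdu\in\calH\}$ is exactly the ideal generated by those minimal monomial generators of $\ini(\bfI_2(\calD))$ lying in $\KK[\bfT_{\calH}]$, the second formula identifies it with $\gens(\ini(\bfI_2(\calH)))=I_{\Delta(\calH)}$, so $\Delta(\calD,\calH)=\Delta(\calH)$. The main obstacle is the zone-by-zone, switch-by-switch verification in the middle paragraph; the rest is bookkeeping with the machinery already in place.
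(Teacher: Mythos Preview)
Your proposal is correct and follows essentially the same strategy as the paper: first reduce from $\calD$ to $\calA_{\bdu}^{+}$ via \Cref{restriction-QLO}, then verify the Detaching Condition of \Cref{I2-inheritance} for $\calH\subset\calA_{\bdu}^{+}$. The paper's proof is terse and simply asserts that one can ``verify directly'' the Detaching Condition in a single step, whereas you factor the verification through the intermediate diagram $\calE$ and sketch the zone-by-zone case analysis; this is a minor organizational difference, not a different approach.
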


\begin{proof}
    Notice that $\calH\subseteq \calA_{\bdu}^{+}$.  Since we already have
    similar formulas for $\calA_{\bdu}^{+}$ instead of $\calH$ in
    \Cref{restriction-QLO}, we may first replace $\calD$ by
    $\calA_{\bdu}^{+}$.  Now, it suffices to verify directly that when
    $\calH\ne \calA_{\bdu}^{+}$, the pair $\calH\subset \calA_{\bdu}^{+}$
    satisfies the detaching condition in \Cref{I2-inheritance}.
\end{proof}

\begin{Lemma}
    \label{calculation-lemma}
        Using the notation in the proof of \Cref{VD-InductionOrder}, the complex $\Delta(\calD,\calH)=\Delta(\calH)$ is pure
    vertex-decomposable of dimension given by \Cref{dim-H-InductionOrder}.
\end{Lemma}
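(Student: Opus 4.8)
The plan is to prove that $\Delta(\calH)$ is pure vertex-decomposable of the dimension in \eqref{dim-H-InductionOrder} by a secondary induction that mirrors the argument of \Cref{VD-InductionOrder}, but is now run on the (non-Ferrers) diagram $\calH$. By \Cref{restriction-DH} we already know $\Delta(\calD,\calH)=\Delta(\calH)$ and that $\bfI_2(\calH)$ is lexicographically quadratic, so $\Delta(\calH)$ is a well-defined squarefree complex and it suffices to argue intrinsically about $\calH$. As a first step I would exhibit an explicit facet of $\Delta(\calH)$ of the claimed cardinality, built from the surviving coordinate rays in the spirit of \Cref{DimI2}: the $x$-ray inside $\calZ_3^{\ge 2}$, the $y$-ray inside $\calZ_5^1\cup\calZ_6^1$, and the $z$-ray inside the ``high'' layer $\Set{(1,j,k): j\le j_0,\ k>c_{\calD^{\ge 2}}}$. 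This pins the dimension from below; combined with purity (which comes with vertex-decomposability) it will give the exact value, and matching it against \eqref{dim-H-InductionOrder} amounts to bookkeeping with \eqref{x1-phantonpoints-number} to rewrite $\bigl|\Phan(\calD)\cap\calA_{\bdu}\bigr|$ in terms of the essential widths and heights.

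For the induction I would process the points of $\calH^1=\calZ_5^1\cup\calZ_6^1\cup\Set{(1,j,k): j\le j_0,\ k>c_{\calD^{\ge 2}}}$ one at a time along a quasi-lexicographic order, showing that each such $\bdw$ is either a phantom point of $\calH$, in which case $\Delta$ is a cone and we invoke \Cref{VD-cone} together with \Cref{equivalent-definition}, or a shedding vertex. For a shedding vertex the deletion is handled by the induction hypothesis applied to the strictly smaller diagram $\calH\setminus\bdw$, using the restriction formulas of \Cref{restriction-QLO} and \Cref{I2-inheritance-application} exactly as in the main proof, and the link $\link_{\Delta(\calH)}(T_{\bdw})$ must be analysed. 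The structural fact that makes this tractable is that $c_{\calD^{\ge 2}}$ is the maximal $z$-coordinate of $\calD^{\ge 2}$: this rules out any $x$- or $z$-coordinate switch between an $i\ge 2$ point of $\calH$ and a point of the high $i=1$ layer, so the neighbours of $\bdw$ separate cleanly into invisible vertices and a residual set, and the residual link complex restricts to $\Delta$ of a diagram of the same shape on fewer points (with $\bfI_2$ still lexicographically quadratic, by the restriction machinery). Iterating, the base case is $\calH^1=\varnothing$, i.e.\ $\calH=\calZ_1^{\ge 2}\cup\calZ_3^{\ge 2}$; after relabelling $i\mapsto i-1$ (and, on $\calZ_1^{\ge 2}$, shifting $z$ down by $\gamma$), each of $\calZ_3^{\ge 2}$ and $\calZ_1^{\ge 2}$ is a Ferrers diagram with the projection property obtained from $\calD^{\ge 2}$ by iterated layer truncations, so \Cref{truncation-2} applies and each has essential length $<a_{\calD}$; I would then combine the induction hypothesis of \Cref{VD-InductionOrder} for these two pieces with one further vertex-decomposition step to treat their union, which is not simply their join.

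The dimension accounting is the third ingredient: each shedding step lowers the link's dimension by one (this is precisely the numerical condition one must verify using the explicit facet above), the base-case dimension contributes $a_{\calD^{\ge 2}}+b_{\calD^{\ge 2}}+c_{\calD^{\ge 2}}-3$, and the remaining terms of \eqref{dim-H-InductionOrder} — namely $\bigl|\Phan(\calD)\cap\calA_{\bdu}\bigr|$ carried over from \eqref{dim-Au} and the subtracted $\min(\gamma,c_{\calD^{\ge 2}})-k_0+1$ — are produced respectively by the phantom points encountered while peeling $\calH^1$ and by the shedding steps; I would tabulate these contributions against \eqref{dim-H-InductionOrder} at the end.

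I expect the main obstacle to be exactly the residual link analysis across the ``$z$-gap'' of $\calH$. Even though $c_{\calD^{\ge 2}}$ being maximal kills the dangerous $x$- and $z$-switches, a handful of $y$-coordinate switches between $\calZ_3^{\ge 2}$ and the high $i=1$ layer still survive, so $\calH$ is genuinely not a join of its low and high parts; for each processed vertex one must identify precisely which diagram the residual link complex is, check that its $\bfI_2$ is again lexicographically quadratic, and confirm its dimension is one less — all while keeping the running count in lockstep with \eqref{dim-H-InductionOrder} rather than off by a phantom-point term. A secondary difficulty is the base case $\Delta(\calZ_1^{\ge 2}\cup\calZ_3^{\ge 2})$, whose two blocks interact through cross $x$-coordinate switches and therefore need their own shedding-vertex argument rather than a direct appeal to the join of the two induction-hypothesis outputs.
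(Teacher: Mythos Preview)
Your plan misses the one observation that makes this lemma short: the diagram $\calH$ is not an exotic non-Ferrers object requiring a bespoke secondary induction, but is (after collapsing the $z$-gap $k_0<k\le \min(\gamma,c_{\calD^{\ge 2}})$) exactly $\widetilde{\calA}_{\bdu}^{+}$ for a \emph{new} three-dimensional Ferrers diagram
\[
\widetilde{\calD}\coloneqq \calZ_3 \cup \calZ_5^1 \cup \calZ_6 \cup \Set{(i,j,k)\in\calD: j\le j_0 \text{ and } k>\min(\gamma,c_{\calD^{\ge 2}})}
\]
which again satisfies the projection property. Since $\widetilde{\calA}_{\bdu}^{+}$ is a strictly smaller instance than $\calA_{\bdu}$ (even when $\widetilde{\calD}=\calD$), the induction already running in \Cref{VD-InductionOrder} applies \emph{directly} and gives pure vertex-decomposability together with a dimension formula of the shape \eqref{dim-Au}, now with $\widetilde{\calD}$ in place of $\calD$. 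The entire remaining work is the phantom-point bookkeeping showing that this number equals \eqref{dim-H-InductionOrder}; no new shedding analysis is needed.

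Your route, by contrast, tries to redo the whole shedding machinery on $\calH$ itself, and the two obstacles you flag are real and unresolved in your outline. First, for the link analysis at a generic $\bdw\in\calH^1$ you would need to identify the residual diagram, check it is again of the same shape, and match dimensions---but you have no class of diagrams closed under this operation except, in effect, the $\widetilde{\calA}_{\bdv}^{+}$'s for Ferrers $\widetilde{\calD}$, which is precisely the paper's reduction. Second, your base case $\calZ_1^{\ge 2}\cup\calZ_3^{\ge 2}$ is not handled by ``one further vertex-decomposition step'': the cross $x$-switches you note mean the two blocks interact nontrivially, and the clean way to treat their union is again to observe that after closing the $z$-gap it is a Ferrers diagram with the projection property (namely $\widetilde{\calD}^{\ge 2}$), so the outer induction on $a_{\calD}$ applies. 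In other words, any successful completion of your plan would rediscover the construction of $\widetilde{\calD}$ at both the inductive step and the base case; the paper simply introduces it at the outset.
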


\begin{proof}
    Let 
    \[
        \widetilde{\calD}\coloneqq \calZ_3 \cup \calZ_5^1 \cup \calZ_6
        \cup \Set{(i,j,k)\in \calD: j\le j_0 \text{ and }
            k>\min(\gamma,c_{\calD^{\ge 2}})}.
    \]
    This diagram is essentially a three-dimensional Ferrers diagram
    which satisfies the projection property. Furthermore, notice that
    $\calZ_6=\calZ_6^1$ and $\calZ_1\subset \widetilde{\calD}$.  Like
    $\calA_{\bdu}$ and $\calA_{\bdu}^{+}$ for $\calD$, we similarly define
    $\widetilde{\calA}_{\bdu}$ and $\widetilde{\calA}_{\bdu}^{+}$ for
    $\widetilde{\calD}$.  As the first step, we notice that
    $\calH=\widetilde{\calA}_{\bdu}^{+}$. Even when
    $\calD=\widetilde{\calD}$, this is a strictly smaller case compared to
    $\calA_{\bdu}$. Hence by induction, $\Delta(\calH)$ is pure
    vertex-decomposable of dimension
    \begin{equation}
        a_{\widetilde{\calD}^{\ge 2}}+b_{\widetilde{\calD}^{\ge
                2}}+c_{\widetilde{\calD}^{\ge 2}}-3+ \left|
            \Phan(\widetilde{\calD})\cap \calH \right|. \label{dim-H1}
    \end{equation}
    It remains to verify that this number agrees with
    \eqref{dim-H-InductionOrder}.

    Notice that by \eqref{x1-phantonpoints-number}, we have
    \begin{align*}
        \left| \Phan(\calD)\cap \calA_{\bdu} \right|=&
        \left| \Phan(\calD) \right| - \left| \Phan(\calD)\setminus
            \calA_{\bdu} \right| \\
        =& (b_{\calD}+c_{\calD})- (b_{\calD^{\ge
                2}}+c_{\calD^{\ge 2}})+1 - \left| \Phan(\calD)\setminus
            \calA_{\bdu} \right|,
    \end{align*}
    and similarly
    \[
        \left| \Phan(\widetilde{\calD})\cap \calH \right|=
        (b_{\widetilde{\calD}}+c_{\widetilde{\calD}})- (b_{\widetilde{\calD}^{\ge
                2}}+c_{\widetilde{\calD}^{\ge 2}})+1 - \left|
            \Phan(\widetilde{\calD})\setminus \calH \right|.
    \]
    Since
    \[
        a_{\calD^{\ge 2}}=a_{\widetilde{\calD}^{\ge 2}},\quad 
        b_{\calD}=b_{\widetilde{\calD}}\quad \text{and} \quad
        c_{\calD}-(\min(\gamma,c_{\calD^{\ge
                2}})-k_0)=c_{\widetilde{\calD}},
    \]
    we are reduced to show that
    \begin{equation}
        \left| \Phan(\widetilde{\calD})\setminus \calH \right|=
        \left| \Phan(\calD)\setminus \calA_{\bdu} \right|+1.
        \label{last-check}
    \end{equation}
    Notice that if $\bdv=(1,j,k)\in \Phan(\calD)\setminus\calA_{\bdu}$ or
    $\Phan(\widetilde{\calD})\setminus\calH$, then $j\le j_0$.  
    \begin{enumerate}[a]
        \item Consider the case when $j<j_0$.
            \begin{enumerate}[i]
                \item Suppose that $c_{\calD^{\ge 2}}<\gamma$. It is clear that
                    \begin{itemize}
                        \item $\bdv$ is in the border of $\calD$ if and only
                            if it is in the border of $\widetilde{\calD}$;
                        \item $\bdv$ is a phantom point with respect to
                            $\calD$ if and only if it is so with respect to
                            $\widetilde{\calD}$ by \Cref{phantom-x1}, and
                        \item when $\bdv$ is a common phantom point, then
                            $\bdv$ belongs to $\calA_{\bdu}$ if and only if it
                            belongs to $\calH$.
                    \end{itemize}
                \item Suppose that $c_{\calD^{\ge 2}}\ge \gamma$. We may
                    simply assume that $k_0=\gamma$ and argue as above.
            \end{enumerate}

        \item Consider the case for $j=j_0$. If $\bdv$ is a border point of
            $\calD^1$ not belonging to $\calA_{\bdu}$, then $k<k_0\le
            c_{\calD^{\ge 2}}$.  Hence $\bdv$ is a normal point, since a
            $z$-coordinates switch is always feasible. This means that we have
            no such phantom point in $\Phan(\calD)\setminus \calA_{\bdu}$. 

            On the other hand, with respect to $\widetilde{\calD}$, we first
            notice that
            \[
                c_{\widetilde{\calD}}( (1,j_0+1,1))=
                \min(c_{\calD}( (1,j_0+1,1)),k_0). 
            \]
            Each border point $\bdv=(1,j_0,k)\in \widetilde{\calD}$ satisfies
            $c_{\widetilde{\calD}}( (1,j_0+1,1))\le k \le k_0$ or
            $\min(\gamma,c_{\calD^{\ge 2}})< k\le \gamma$. 
            \begin{itemize}
                \item When $c_{\widetilde{\calD}}( (1,j_0+1,1))\le k < k_0$,
                    a $z$-coordinates switch is available. Hence the border point
                    is a normal point.  
                \item When $\min(\gamma,c_{\calD^{\ge 2}})< k\le \gamma$, the
                    border point belongs to $\calH$.  
            \end{itemize}
            Hence, it suffices to show that the border point $\bdu=(1,j_0,k_0)$ is a
            phantom point for $\widetilde{\calD}$.  Notice that for any
            $(1,j_1,k_1)\in \widetilde{\calA}_{\bdu}^{+}$, one has
            $(2,j_1,k_1)\notin \widetilde{\calD}$.  Thus, $x$-coordinates
            switch is forbidden for $\bdu$ in $\widetilde{\calA}_{\bdu}$.
            Since $(2,j_0+1,1)\notin \widetilde{\calD}$, $y$-coordinates switch
            is also not possible for $\bdu$ in $\widetilde{\calA}_{\bdu}$.
            \begin{itemize} 
                \item When $c_{\calD^{\ge 2}}\ge \gamma$, we cannot increase
                    the $z$-coordinate of $\bdu$ within $\widetilde{\calD}$.  
                \item  When $c_{\calD^{\ge 2}}<\gamma$, we have
                    $c_{\widetilde{\calD}^{\ge 2}}=k_0$.
            \end{itemize}
            In either case, $z$-coordinates switch is not possible for
            $\bdu$ in $\widetilde{\calA}_{\bdu}$.  Thus, $\bdu$ is a phantom
            point for $\widetilde{\calD}$.  This only phantom point
            contributes to the number $1$ in \eqref{last-check}. 
    \end{enumerate}

    Thus, we have established \eqref{last-check}.
\end{proof}

\begin{Lemma}
	    Using the notation in the proof of \Cref{VD-InductionOrder}, we have  
    \[
        \bfI_2(\calD')\cap \KK[\bfT_{\calH'}]=\bfI_2(\calH') \quad \text{ and
        } \quad  \gens(\ini(\bfI_2(\calD')))\cap
        \KK[\bfT_{\calH'}]=\gens(\ini(\bfI_2(\calH'))).  
    \]
    In particular, $\Delta(\calD',\calH')=\Delta(\calH')$ and $\bfI_2(\calH')$
    is lexicographically quadratic.
\end{Lemma}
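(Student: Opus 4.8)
The plan is to follow the proof of \Cref{restriction-DH} essentially verbatim, with the flip $\calS$ interchanging the roles of the $y$- and $z$-coordinates. First I would observe that $\calH'\subseteq (\calD'_{\bdu})^{+}$, and that $(\calD'_{\bdu})^{+}$ is obtained from $\calD'$ by deleting an initial segment of its $x=1$ layer in the lexicographic order. Since in the second stage the induction order is the lexicographic order, and since $\bfI_2$ of the diagram in play is lexicographically quadratic (as already used in the second stage of the proof of \Cref{VD-InductionOrder}, via the projection property of $\calD'=\calS(\calD)$ together with \Cref{I2lq} and \Cref{I2-inheritance}), \Cref{restriction-QLO} together with \Cref{I2-inheritance-application} yields both restriction formulas with $(\calD'_{\bdu})^{+}$ in place of $\calH'$. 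Thus I may replace $\calD'$ by $(\calD'_{\bdu})^{+}$, and it remains only to check, when $\calH'\ne (\calD'_{\bdu})^{+}$, that the pair $\calH'\subset (\calD'_{\bdu})^{+}$ satisfies the detaching condition of \Cref{I2-inheritance}.

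Unwinding the notation from the second stage of \Cref{VD-InductionOrder}, $\calH'$ is obtained from $(\calD'_{\bdu})^{+}$ by deleting $\calZ_2\cup\calZ_4\cup\calZ_5^{\ge 2}$, the $\calZ$-zones being taken relative to $\calD'$ and $\bdu=(1,j_0,k_0)$. Hence the detaching condition reduces to the statement: whenever $\bdv_1,\bdv_2\in\calH'$ and a single coordinate switch carries them to $\bdv_1',\bdv_2'\in(\calD'_{\bdu})^{+}$, neither output point lies in $\calZ_2\cup\calZ_4\cup\calZ_5^{\ge 2}$. This I would establish by a direct case analysis on which of the $x$-, $y$-, $z$-coordinates is switched, using the Ferrers property, the projection property of $\calD'$, and the facts already recorded for the second stage --- notably that $\calZ_6(\calD')=\calZ_6^1(\calD')$ holds automatically and the explicit shape of $\calE'$. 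With the detaching condition in hand, the two restriction formulas follow, and ``$\Delta(\calD',\calH')=\Delta(\calH')$ and $\bfI_2(\calH')$ is lexicographically quadratic'' is then immediate from the final clause of \Cref{I2-inheritance}.

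The only genuine content, and the expected obstacle, is this case analysis. The subtle case is a $y$- or $z$-coordinate switch between two input points of small $j$-coordinate whose output could a priori land in the slab $\{j>j_0\}$, i.e. inside $\calZ_4\cup\calZ_5$, or inside the truncated column $\calZ_2$; excluding this uses that the $x=1$ part of the relevant column has already been cut down and that its $\calZ_2^1$ slab has been removed, together with the projection property relating two adjacent $x$-layers of $\calD'$. I expect this to go through exactly as the corresponding verification in \Cref{restriction-DH} --- arguably more cleanly, since the second-stage construction was arranged precisely so that the ``ceiling restriction'' needed in the first stage is here automatic --- so that no new ideas should be required.
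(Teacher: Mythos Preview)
Your proposal is correct and follows essentially the same approach as the paper, whose entire proof reads ``The proof is similar to that of \Cref{restriction-DH}.'' You have simply spelled out that outline: reduce from $\calD'$ to $(\calD'_{\bdu})^{+}$ via \Cref{restriction-QLO}, then verify the detaching condition of \Cref{I2-inheritance} for the pair $\calH'\subset (\calD'_{\bdu})^{+}$, which is exactly the structure of the proof of \Cref{restriction-DH}.
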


\begin{proof}
    The proof is similar to that of \Cref{restriction-DH}.  
\end{proof}

\begin{Lemma}
    \label{calculation-lemma-Dprime}
        Using the notation in the proof of \Cref{VD-InductionOrder},  
    the complex $\Delta(\calD',\calH')=\Delta(\calH')$ is pure
    vertex-decomposable of dimension given by \Cref{dim-H}.
\end{Lemma}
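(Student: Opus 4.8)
The plan is to rerun the argument of \Cref{calculation-lemma} inside the flipped diagram $\calD'$, where the bookkeeping simplifies: since $\calZ_6(\calD')=\calZ_6^1(\calD')$ automatically in the second stage, the quantity $\min(\gamma,c_{\calD^{\ge 2}})$ that occurred in \Cref{calculation-lemma} is replaced everywhere by $\gamma$, and there is no case distinction on the size of $c_{\calD^{\ge 2}}$.

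First I would introduce, with respect to $\calD'$ and $\bdu=(1,j_0,k_0)$ (the $\calZ$-zones being taken with respect to these), the auxiliary diagram
\[
    \widetilde{\calD'}\coloneqq \calZ_3 \cup \calZ_5^1 \cup \calZ_6 \cup \Set{(i,j,k)\in \calD': j\le j_0 \text{ and } k>\gamma},
\]
which equals $\calZ_1\cup\calZ_3\cup\calZ_5^1\cup\calZ_6$ and is the exact analogue of $\widetilde{\calD}$ in \Cref{calculation-lemma}. As there, one checks that $\widetilde{\calD'}$ is essentially a three-dimensional Ferrers diagram satisfying the projection property (the hypothesis $\calZ_6(\calD')=\calZ_6^1(\calD')$ is what prevents a new high-dimensional entanglement when the layers lying above $\calZ_1$ are discarded), so $\bfI_2(\widetilde{\calD'})$ is lexicographically quadratic. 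Defining $\widetilde{\calA}_{\bdu}$ and $\widetilde{\calA}_{\bdu}^{+}$ for $\widetilde{\calD'}$ as $\calD'_{\bdu}$ and $(\calD_{\bdu}')^{+}$ are for $\calD'$, the key identity to verify is $\calH'=\widetilde{\calA}_{\bdu}^{+}$; this drops out of $\calH'=\calE'\setminus\calZ_2^1$ together with the observation $\calE'\cap\Set{(1,j,k):j\le j_0}=\calZ_2^1$ recorded in the proof of \Cref{VD-InductionOrder}. Since $\widetilde{\calD'}$ is a strictly smaller case for the running induction (just as $\widetilde{\calD}$ is in \Cref{calculation-lemma}), the induction hypothesis of \Cref{VD-InductionOrder} gives that $\Delta(\calH')=\Delta(\widetilde{\calA}_{\bdu}^{+})$ is pure vertex-decomposable of dimension
\[
    a_{\widetilde{\calD'}^{\ge 2}}+b_{\widetilde{\calD'}^{\ge 2}}+c_{\widetilde{\calD'}^{\ge 2}}-3+ \left| \Phan(\widetilde{\calD'})\cap \calH' \right|.
\]

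It remains to match this with the dimension predicted by \eqref{dim-H}. I would feed the count \eqref{x1-phantonpoints-number} into both $\calD'$ and $\widetilde{\calD'}$, use the elementary identities $a_{\calD'^{\ge 2}}=a_{\widetilde{\calD'}^{\ge 2}}$, $b_{\calD'}=b_{\widetilde{\calD'}}$ and $c_{\calD'}-(\gamma-k_0)=c_{\widetilde{\calD'}}$ (discarding the layers above $\calZ_1$ removes exactly $\gamma-k_0$ from the height, along the column through $\bdu$), together with $\Phan(\calD')\cap\calD'_{\bdu}=\Phan(\calD'_{\bdu})$ from \Cref{I2-inheritance-application}; after cancellation the claim reduces to the single relation
\[
    \left| \Phan(\widetilde{\calD'})\setminus \calH' \right|=\left| \Phan(\calD')\setminus \calD'_{\bdu} \right|+1,
\]
the literal analogue of \eqref{last-check}. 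To establish it I would split on whether a phantom point $\bdv=(1,j,k)$ (necessarily with $j\le j_0$) has $j<j_0$ or $j=j_0$. In the case $j<j_0$, being a border point and being a phantom point are detected identically by $\calD'$ and by $\widetilde{\calD'}$ (via \Cref{phantom-x1}), and membership of a common phantom point in $\calD'_{\bdu}$ matches membership in $\calH'$, so these points contribute equally to both sides. In the case $j=j_0$, one checks that $\calD'$ has no phantom point among the border points of $(\calD')^1$ lying outside $\calD'_{\bdu}$ (for such a point a coordinate switch with a later point remains available, using $\calZ_6(\calD')=\calZ_6^1(\calD')$ and that $j_0$ exceeds $b_{\calD'^{\ge 2}}$ throughout the range of the second stage), whereas with respect to $\widetilde{\calD'}$ the unique contribution is $\bdu=(1,j_0,k_0)$ itself: in $\widetilde{\calA}_{\bdu}$ no point $(2,j_1,k_1)\in\widetilde{\calD'}$ lies above the column of $\bdu$ (blocking an $x$-switch), $(2,j_0+1,1)\notin\widetilde{\calD'}$ (blocking a $y$-switch), and — exactly as in \Cref{calculation-lemma}, according as $c_{\widetilde{\calD'}^{\ge 2}}=k_0$ or the $z$-coordinate of $\bdu$ cannot be raised inside $\widetilde{\calD'}$ — a $z$-switch is blocked as well. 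This supplies the ``$+1$''.

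The main obstacle, precisely as in \Cref{calculation-lemma}, is this final phantom-point count: one has to be sure that the truncation passing from $\calD'$ to $\widetilde{\calD'}$ alters the border/normal/phantom classification, and the position in the induction order, of exactly the predicted points and of no others — in particular that it turns $\bdu$ into a phantom point while leaving every other point of the $x=1$ layer unaffected. The remaining ingredients (the Ferrers and projection properties of $\widetilde{\calD'}$, the restriction formulas passing $\bfI_2$ and its initial ideal down to $\calH'$, and the identity $\calH'=\widetilde{\calA}_{\bdu}^{+}$) are routine with the machinery already developed, and the whole argument is marginally shorter than that of \Cref{calculation-lemma} for lack of any comparison between $\gamma$ and $c_{\calD^{\ge 2}}$.
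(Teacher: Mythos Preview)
Your approach is the same as the paper's: introduce $\widetilde{\calD'}=\calZ_1\cup\calZ_3\cup\calZ_5\cup\calZ_6$, identify $\calH'$ with a lexicographic truncation of it, invoke the induction hypothesis, and reduce the dimension match to the phantom-count identity
\[
\left|\Phan(\widetilde{\calD'})\setminus\calH'\right|=\left|\Phan(\calD')\setminus\calD'_{\bdu}\right|+1.
\]
Your $j=j_0$ analysis (contributions $0$ and $1$) is correct under the standing hypothesis that $\bdu$ is a \emph{normal} point: normality forces either $\gamma'=c_{\calD'}((1,j_0+1,1))>k_0$ (if $\bdu$ is not a border point) or $c_{(\calD')^{\ge 2}}>k_0$ (if $\bdu\in\calB_z$), and either condition kills any phantom $(1,j_0,k)$ with $k<k_0$. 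The paper instead records explicit formulas for the $j=j_0$ counts and checks their difference is $1$; under normality of $\bdu$ those formulas collapse to your $0$ and $1$.

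The one place your argument needs tightening is the $j<j_0$ case. You assert that border and phantom status are ``detected identically'' in $\calD'$ and in $\widetilde{\calD'}$, but this is not literally true: a border point $(1,j,\gamma)$ of $\calD'$ with $j<j_0$ lies in $\calZ_2$ whenever $k_0<\gamma$ and is therefore absent from $\widetilde{\calD'}$. What is true is that, since for every $j\le j_0$ one has $c_{\calD'}((1,j,1))\ge\gamma$, the column heights over $j\le j_0$ all drop by the \emph{same} amount $\gamma-k_0$ in passing from $(\calD')^1$ to $(\widetilde{\calD'})^1$; hence the shift $k\mapsto k-(\gamma-k_0)$ gives a bijection between border points with $j<j_0$ in the two diagrams. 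One checks (using $b_{\widetilde{\calD'}^{\ge 2}}=b_{(\calD')^{\ge 2}}$ and a short case split on whether $c_{(\calD')^{\ge 2}}\le\gamma$) that this bijection respects membership in $\calB_y$ and $\calB_z$, hence phantom status; and every such phantom point lies outside both $\calD'_{\bdu}$ and $\calH'$. That yields the equal $j<j_0$ contributions you claim. (The paper's own treatment here --- the bare assertion ``then $j=j_0$'' --- is at least as terse and would benefit from the same bijection.)
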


\begin{proof}
    The $\calZ$-zones here are with respect to $\calD'=\calS(\calD)$.  Let 
    \[
        \widetilde{\calD'}\coloneqq \calZ_1\cup\calZ_3 \cup \calZ_5 \cup \calZ_6.
    \]
    This diagram is still essentially a three-dimensional Ferrers diagram which satisfies the projection property. Furthermore, notice that $\calZ_5=\calZ_5^1$ and $\calZ_6=\calZ_6^1$ since $b_{(\calD')^{\ge 2}}=c_{\calD^{\ge 2}}$.  As in the first step, we notice that $\calH'=\widetilde{\calD'}_{(1,j_0+1,1)}$, the subdiagram of $\widetilde{\calD'}$ by removing the points preceding $(1,j_0+1,1)$ lexicographically.  Even when $\calD'=\widetilde{\calD'}$, this is a strictly smaller case compared with $\calS(\calD\setminus\calC)$.  Hence by induction, $\Delta(\calH')$ is pure vertex-decomposable of dimension
    \begin{equation}
        a_{\widetilde{\calD'}^{\ge 2}}+b_{\widetilde{\calD'}^{\ge
                2}}+c_{\widetilde{\calD'}^{\ge 2}}-3+ \left|
            \Phan(\widetilde{\calD'})\cap \calH' \right|.
        \label{dim-H1prime}
    \end{equation}
    It remains to verify that this number agrees with
    \eqref{dim-H}.

    Notice that by \eqref{x1-phantonpoints-number}, we have
    \begin{align*}
        \left| \Phan(\calD')\cap \calD'_{\bdu} \right|=&
        \left| \Phan(\calD') \right| - \left| \Phan(\calD')\setminus
           \calD'_{\bdu} \right| \\
        =& (b_{\calD'}+c_{\calD'})- (b_{\calD'^{\ge
                2}}+c_{\calD'^{\ge 2}})+1 - \left| \Phan(\calD')\setminus
           \calD'_{\bdu} \right|,
    \end{align*}
    and similarly
    \[
        \left| \Phan(\widetilde{\calD'})\cap \calH' \right|=
        (b_{\widetilde{\calD'}}+c_{\widetilde{\calD'}})- (b_{\widetilde{\calD'}^{\ge
                2}}+c_{\widetilde{\calD'}^{\ge 2}})+1 - \left|
            \Phan(\widetilde{\calD'})\setminus \calH' \right|.
    \]
    Since
    \[
        a_{\calD'^{\ge 2}}=a_{\widetilde{\calD'}^{\ge 2}},\quad 
        b_{\calD'}=b_{\widetilde{\calD'}}\quad \text{and} \quad
        c_{\calD'}-(\gamma-k_0)=c_{\widetilde{\calD'}},
    \]
    we are reduced to show that
    \begin{equation}
        \left| \Phan(\widetilde{\calD'})\setminus \calH' \right|=
        \left| \Phan(\calD')\setminus \calD'_{\bdu} \right|+1.
        \label{last-check-Dprime}
    \end{equation}
    The proof is similar to but simpler than that for
    \Cref{calculation-lemma}.  If $\bdv=(1,j,k)\in
    \Phan(\calD')\setminus\calD'_{\bdu}$ or
    $\Phan(\widetilde{\calD'})\setminus\calH'$, then $j=j_0$. 
   
    Take arbitrary $\bdv\in \Phan(\calD')\setminus \calD'_{\bdu}$. Then,
    \begin{itemize}
        \item $k< k_0$ since $\bdv\notin \calD'_{\bdu}$;
        \item $k\ge \gamma'\coloneqq c_{\calD'}( (1,j_0+1,1))$ since $\bdv$ is a border point;
        \item $k\ge c_{(\calD')^{\ge 2}}$ by \Cref{phantom-x1}.
    \end{itemize}
    It is also clear that any $(1,j_0,k)$ satisfying the above requirements
    belongs to $\Phan(\calD')\setminus \calD'_{\bdu}$.  The cardinality of
    this set is
    \begin{equation}
        \max\left( k_0-\max(\gamma',c_{(\calD')^{\ge 2}}),0 \right).  
        \label{phan-Du}
    \end{equation}

    Likewise, $\bdv=(1,j_0,k) \in \Phan(\widetilde{\calD'})\setminus\calH'$ if and only if the following three requirements are satisfied.
    \begin{enumerate}[a]
        \item \label{a} $k_0\ge k$ since $c_{\widetilde{\calD'}}((1,j_0,1))=k_0$.
        \item $k\ge \min(\gamma',k_0)$ since $c_{\widetilde{\calD'}}((1,j_0+1,1))=\min(\gamma',k_0)$.
        \item When $c_{{\calD'}^{\ge 2}}<k_0$, $c_{\widetilde{\calD'}^{\ge 2}}= c_{{\calD'}^{\ge 2}}$ and the only additional requirement for $\bdv$ is
            \[
                k\ge c_{(\calD')^{\ge 2}}
            \]
            by \Cref{phantom-x1}.  On the other hand, when $c_{{\calD'}^{\ge
                    2}}<k_0$, the corresponding requirement is
            \[
                k=k_0.
            \] 
            Taking account of \ref{a} above, we can combine these two into
            the condition:
            \[
                k\ge \min(c_{(\calD')^{\ge 2}},k_0).
            \]
    \end{enumerate}
    Thus the cardinality of $\Phan(\widetilde{\calD'})\setminus\calH'$ is
    \begin{equation}
        k_0+1-\max\left( \min(\gamma',k_0),\min(c_{(\calD')^{\ge 2}},k_0) \right).
        \label{phan-H}
    \end{equation}
    One can verify directly that \eqref{phan-H} minus \eqref{phan-Du} equals
    one, which corresponds to the $1$ in \eqref{last-check-Dprime}. Thus, we
    have established \eqref{last-check-Dprime}.
\end{proof}

It follows from the proof of \Cref{VD-InductionOrder} and the discussion in
\Cref{section:Cohen--Macaulay} that

\begin{Corollary}
    Let $\calD$ be a three-dimensional Ferrers diagram which satisfies the
    projection property.  Take arbitrary $\bdu\in \calD^1$. Then both
    $\bfI_2(\calA_{\bdu})$ and $\ini(\bfI_2(\calA_{\bdu}))$ are
    Cohen--Macaulay of the same codimension given by \eqref{dim-Au}. In
    particular,  both $\bfI_2(\calD)$ and $\ini(\bfI_2(\calD))$ are
    Cohen--Macaulay.
\end{Corollary}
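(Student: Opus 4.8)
The plan is to harvest the statement directly from the inductive proof of \Cref{VD-InductionOrder}, since that proof already carries out all of the substantive work. The first point is that the proof of \Cref{VD-InductionOrder}, being organized by peeling off the points of the $x=1$ layer one at a time along the induction order, does not merely prove the assertion for $\calD$: at each stage it establishes that the restriction complex $\Delta(\calD,\calA_{\bdu})$, which coincides with $\Delta(\calA_{\bdu})$ by \Cref{restriction-QLO}, is pure vertex-decomposable of the dimension recorded in \eqref{dim-Au}. (For the base case $a_{\calD}=1$, the layers with $x\ge 2$ are empty and this is just the two-dimensional statement of \cite[Theorem 3.3]{CNPY} applied to the diagrams $\calA_{\bdu}$.) Hence, for every $\bdu\in\calD^1$, the complex $\Delta(\calA_{\bdu})$ is pure vertex-decomposable of dimension $a_{\calD^{\ge 2}}+b_{\calD^{\ge 2}}+c_{\calD^{\ge 2}}-3+\left|\Phan(\calD)\cap\calA_{\bdu}\right|$.

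Next I would invoke the standard chain ``pure vertex-decomposable $\Rightarrow$ pure shellable $\Rightarrow$ Cohen--Macaulay'' for simplicial complexes, which gives that the Stanley--Reisner ring $\KK[\bfT_{\calA_{\bdu}}]/\ini(\bfI_2(\calA_{\bdu}))$ is Cohen--Macaulay, that is, $\ini(\bfI_2(\calA_{\bdu}))$ is a Cohen--Macaulay ideal; its codimension is $\left|\calA_{\bdu}\right|-1-\dim\Delta(\calA_{\bdu})$, hence the codimension determined by \eqref{dim-Au}. To pass from $\ini(\bfI_2(\calA_{\bdu}))$ back to $\bfI_2(\calA_{\bdu})$ I would cite \cite[Corollary 3.3.5]{MR2724673}: since $\bfI_2(\calA_{\bdu})$ and its initial ideal share the same Hilbert function, they have the same codimension, and Cohen--Macaulayness of the initial ideal forces Cohen--Macaulayness of $\bfI_2(\calA_{\bdu})$. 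This settles the first assertion.

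The ``in particular'' clause is the special case $\bdu=(1,1,1)$, the first point of $\calD$ in the induction order (equivalently, in the lexicographic order), for which $\calA_{\bdu}=\calD$; using \eqref{x1-phantonpoints-number} together with $a_{\calD}=a_{\calD^{\ge 2}}+1$, the dimension from the first paragraph collapses to $a_{\calD}+b_{\calD}+c_{\calD}-3$, in agreement with \Cref{VD-InductionOrder}, so both $\bfI_2(\calD)$ and $\ini(\bfI_2(\calD))$ are Cohen--Macaulay. There is no genuine obstacle to overcome: the statement is a bookkeeping corollary of \Cref{VD-InductionOrder}, \Cref{calculation-lemma} and \Cref{calculation-lemma-Dprime}. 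The only things needing a moment's attention are the observation that the inductive scheme already yields the vertex-decomposability of every intermediate diagram $\calA_{\bdu}$ (by the way the induction is set up), and the routine identification of $\codim\bfI_2(\calA_{\bdu})$ with $\codim\ini(\bfI_2(\calA_{\bdu}))$ via the common Hilbert function.
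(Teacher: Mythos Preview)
Your proposal is correct and matches the paper's approach exactly: the paper presents this corollary without a separate proof, simply noting that it follows from the proof of \Cref{VD-InductionOrder} together with the discussion at the start of \Cref{section:Cohen--Macaulay}, which invokes precisely the chain ``pure vertex-decomposable $\Rightarrow$ shellable $\Rightarrow$ Cohen--Macaulay'' and \cite[Corollary 3.3.5]{MR2724673} that you cite. Your additional remarks on the codimension bookkeeping and the identification $\calA_{(1,1,1)}=\calD$ are accurate and make explicit what the paper leaves implicit.
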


\section{Primeness} 

Let $\calD$ be a three-dimensional Ferrers diagram which satisfies the
projection property. The theme of this section is to show $\bfI_2(\calD)$ is
a prime ideal. The primary strategy is to use the Cohen--Macaulayness shown
in the previous section. In particular, the ideal $\bfI_2(\calD)$ is unmixed.
We also need to define suitable maps in view of the localization of
variables, so that we can proceed by induction. 

\begin{Lemma}
    \label{prime-induction}
    Let $\calD$ be a finite three-dimensional diagram.  Take arbitrary
    $\bdu\in \calD$ and let $\calD'=\calD\setminus \bdu$.  Suppose that
    $\bfI_2(\calD)$ is unmixed and
    $\codim(\bfI_2(\calD))=\codim(\bfI_2(\calD'))+1$.  If the localization
    $(\KK[\bfT_{\calD}]/\bfI_2(\calD))[T_{\bdu}^{-1}]$ is a domain and
    $\bfI_2(\calD')$ is a prime ideal, then $\bfI_2(\calD)$ is also a prime
    ideal.
\end{Lemma}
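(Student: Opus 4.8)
The plan is a localization-at-$T_{\bdu}$ argument: write $\bfI_2(\calD)$ as $\frakp\cap\frakb$, where $\frakp$ is the $T_{\bdu}$-saturation (prime by the localization hypothesis) and $\frakb$ collects the remaining primary components, and then show $\frakb$ is the whole ring using unmixedness together with a codimension jump at $T_{\bdu}$. Write $S=\KK[\bfT_{\calD}]$ and $h=\codim\bfI_2(\calD)$. Two preliminary facts I would record first: $\bfI_2(\calD)\subseteq J_{\calD}$ (each generator is a binomial of $J_{\calD}$), and $T_{\bdu}\notin J_{\calD}$ because $\varphi(T_{\bdu})=x_{i_0}y_{j_0}z_{k_0}\ne 0$; hence $T_{\bdu}\notin\sqrt{\bfI_2(\calD)}$, so $(S/\bfI_2(\calD))[T_{\bdu}^{-1}]$ is a nonzero ring, and by hypothesis it is a domain. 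Therefore its contraction $\frakp\coloneqq\bfI_2(\calD):T_{\bdu}^{\infty}$ is a proper prime ideal of $S$. Choosing an irredundant primary decomposition $\bfI_2(\calD)=\bigcap_i\frakq_i$, one has $\frakp=\bigcap_{T_{\bdu}\notin\sqrt{\frakq_i}}\frakq_i$, so $\bfI_2(\calD)=\frakp\cap\frakb$ with $\frakb\coloneqq\bigcap_{T_{\bdu}\in\sqrt{\frakq_i}}\frakq_i$ (and $\frakb=S$ if there is no such $i$). It suffices to prove $\frakb=S$, for then $\bfI_2(\calD)=\frakp$ is prime.

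The crux is the strict inequality $\codim(\bfI_2(\calD)+(T_{\bdu}))>h$. Since $\codim\bfI_2(\calD')=h-1$ by hypothesis and $\bfI_2(\calD')$ is prime, $D\coloneqq\KK[\bfT_{\calD'}]/\bfI_2(\calD')$ is a finitely generated (hence equidimensional) $\KK$-domain of dimension $\dim\KK[\bfT_{\calD'}]-(h-1)=\dim S-h$. Identifying $\KK[\bfT_{\calD'}]$ with $S/(T_{\bdu})$ and using that the generators of $\bfI_2(\calD')$ are among those of $\bfI_2(\calD)$, I obtain a natural surjection $\pi\colon D\onto S/(\bfI_2(\calD)+(T_{\bdu}))$. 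This surjection has nonzero kernel: the hypothesis $\codim\bfI_2(\calD)=\codim\bfI_2(\calD')+1$ forces $\bfI_2(\calD)\ne\bfI_2(\calD')S$ (the latter has codimension $h-1$ since $S/\bfI_2(\calD')S\cong D[T_{\bdu}]$), so some $2$-minor of $\calD$ lies outside $\bfI_2(\calD')S$; as any $2$-minor not involving $T_{\bdu}$ is a $2$-minor of $\calD'$ and hence lies in $\bfI_2(\calD')S$, this minor has the form $g=T_{\bdu}T_{\bdv}-T_{\bdv_1}T_{\bdv_2}$ with $\bdv_1,\bdv_2\ne\bdu$. Then $T_{\bdv_1}T_{\bdv_2}=T_{\bdu}T_{\bdv}-g\in\bfI_2(\calD)+(T_{\bdu})$, whereas $T_{\bdv_1}T_{\bdv_2}\notin\bfI_2(\calD')$ because $\bfI_2(\calD')\subseteq J_{\calD'}$ contains no monomial; so $T_{\bdv_1}T_{\bdv_2}$ represents a nonzero element of $\ker\pi$. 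A nonzero ideal in an equidimensional finitely generated $\KK$-domain strictly drops the Krull dimension, so $\dim S/(\bfI_2(\calD)+(T_{\bdu}))<\dim D=\dim S-h$, i.e.\ $\codim(\bfI_2(\calD)+(T_{\bdu}))>h$.

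Finally I would run the contradiction. Suppose $\frakb\ne S$. Then $\sqrt{\frakb}=\bigcap_{T_{\bdu}\in\sqrt{\frakq_i}}\sqrt{\frakq_i}$ is a nonempty intersection of associated primes of $\bfI_2(\calD)$, each of codimension $h$ by unmixedness, so $\codim\frakb=h$; and since $T_{\bdu}\in\sqrt{\frakb}$ we get $\sqrt{\frakb+(T_{\bdu})}=\sqrt{\frakb}$, hence $\codim(\frakb+(T_{\bdu}))=h$. But $\bfI_2(\calD)\subseteq\frakb$ yields $\bfI_2(\calD)+(T_{\bdu})\subseteq\frakb+(T_{\bdu})$, so $\codim(\bfI_2(\calD)+(T_{\bdu}))\le h$, contradicting the previous paragraph. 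Hence $\frakb=S$ and $\bfI_2(\calD)=\frakp$ is prime.

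I expect the main difficulty to be the strict codimension jump $\codim(\bfI_2(\calD)+(T_{\bdu}))>\codim\bfI_2(\calD)$: this is precisely where the special nature of $\bfI_2(\calD)$ enters — it is a binomial ideal inside the prime toric ideal $J_{\calD}$, so it contains no monomials, yet reducing a $T_{\bdu}$-involving $2$-minor modulo $T_{\bdu}$ produces one, and the codimension hypothesis is what guarantees such a minor is present. The rest is routine localization and unmixedness bookkeeping, and it uses neither the Ferrers shape nor the projection property.
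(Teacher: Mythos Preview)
Your proof is correct and follows essentially the same approach as the paper's: both arguments show $T_{\bdu}\notin\sqrt{\bfI_2(\calD)}$, split the associated primes of $\bfI_2(\calD)$ according to whether they contain $T_{\bdu}$, and reach a contradiction (when a $T_{\bdu}$-containing component exists) by producing a $2$-minor $T_{\bdu}T_{\bdv}-T_{\bdu'}T_{\bdv'}$ whose reduction mod $T_{\bdu}$ is a monomial not in the prime $\bfI_2(\calD')$, forcing $\codim(\bfI_2(\calD),T_{\bdu})>\codim\bfI_2(\calD)$ against unmixedness. The only cosmetic differences are that the paper verifies $T_{\bdu}\notin\sqrt{\bfI_2(\calD)}$ by evaluating at the all-ones point rather than via $J_{\calD}$, and organizes the dichotomy as ``$m=1$ versus $m\ge 2$'' rather than as ``$\frakb=S$ versus $\frakb\ne S$''.
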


\begin{proof}
    Consider the ideal $\bfI_2(\calD)$ in $S=\KK[\bfT_{\calD}]$, whose
    associated primes are $\frakp_1,\dots,\frakp_m$. Since $T_{\bdu}$
    does not vanish at the point $(1,1,\dots,1)\in \KK^{|\calD|}$
    which is a zero of $\bfI_2(\calD)$, $T_u\notin
    \sqrt{\bfI_2(\calD)}=\frakp_1\cap\cdots\cap\frakp_m$. Meanwhile,
    $\bfI_2(\calD)S[T_u^{-1}]$ is a prime ideal by hypothesis. Hence,
    without loss of generality, we may assume that $T_u\notin\frakp_1$
    while $T_{u}\in \frakp_i$ for $i\ne 1$.

    When $m=1$, $\bfI_2(\calD)$ is $\frakp_1$-primary. Hence 
    \[
        \bfI_2(\calD)=\bfI_2(\calD) S_{\frakp_1}\cap S\supseteq
        \bfI_2(\calD)S[T_{u}^{-1}]\cap S\supseteq \bfI_2(\calD),
    \]
    which means $\bfI_2(\calD)$ is a prime ideal.

    When $m\ge 2$, by the unmixedness assumption,
    \begin{equation} 
        \codim(\bfI_{2}(\calD))=\codim(\bfI_2(\calD),T_{\bdu}).
        \label{SameHeight}
    \end{equation} 
    Since $\bfI_2(\calD)\ne \bfI_2(\calD')$ by our hypothesis, we
    can find some quadratic binomial $f=T_{\bdu}T_{\bdv}-T_{\bdu'}T_{\bdv'}
    \in \bfI_2(\calD)\setminus \bfI_2(\calD')$. Obviously, $\bdu',\bdv'\in
    \calD'$ and the ideal $(f,T_{\bdu})=(T_{\bdu'}T_{\bdv'},T_{\bdu})$.  
    Since $\bfI_2(\calD')$ is a prime ideal and $T_{\bdu'}T_{\bdv'}\notin J_{\calD'}\supseteq \bfI_2(\calD')$,
    \[
        \codim(\bfI_2(\calD'),T_{\bdu'}T_{\bdv'})=\codim(\bfI_2(\calD'))+1=\codim(\bfI_2(\calD)).
    \]
    However,
    \begin{align*}
        &\codim(\bfI_2(\calD),T_{\bdu})\ge
        \codim(\bfI_2(\calD'),f,T_{\bdu}) =
        \codim(\bfI_2(\calD'),T_{\bdu'}T_{\bdv'},T_{\bdu}) \\
        =&\codim(\bfI_2(\calD'),T_{\bdu'}T_{\bdv'})+1
        =\codim(\bfI_2(\calD))+1.
    \end{align*}
    This is a contradiction to \eqref{SameHeight}. 
    
    Therefore, $m=1$ and the ideal $\bfI_2(\calD)$ is a prime ideal.
\end{proof}

\begin{Theorem}
    \label{I2prime}
    Let $\calD$ be a three-dimensional Ferrers diagram which satisfies the
    projection property. Then $\bfI_2(\calD)$ is a prime ideal in
    $\KK[\bfT_{\calD}]$.
\end{Theorem}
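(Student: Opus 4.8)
The proof is by a double induction that runs parallel to that of \Cref{VD-InductionOrder}. The outer induction is on $a_{\calD}$. For the base case $a_{\calD}=1$ the diagram is essentially two-dimensional, so $\bfI_2(\calD)$ is the classical ideal of $2$-minors of a Ferrers array, which coincides with the toric ideal $J_{\calD}$ and is prime by the two-dimensional results of \cite{CNPY} (see also \cite{MR2457403}). Assume now $a_{\calD}\ge 2$. The inner induction removes the points of $\calD^{1}$ one at a time along the induction order of Section 3 (using \Cref{induction-order} and the fact that the flip $\calS$ preserves $\bfI_2$, unmixedness, and primeness), proving that $\bfI_2(\calA_{\bdu})$ is prime for every $\bdu\in\calD^{1}$. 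When $\bdu$ is the last point, $\calA_{\bdu}^{+}=\calD^{\ge 2}$ is handled by the outer induction since $a_{\calD^{\ge 2}}=a_{\calD}-1$; when $\bdu=(1,1,1)$ is the first point, $\calA_{\bdu}=\calD$ and we are done.

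For the inductive step fix $\bdu\in\calD^{1}$, put $\calG\coloneqq \calA_{\bdu}^{+}=\calA_{\bdu}\setminus\bdu$, and assume $\bfI_2(\calG)$ is prime. If $\bdu$ is a phantom point, then by \Cref{equivalent-definition} (equivalently, by the definition of $N(\calD)$) we have $\ini(\bfI_2(\calA_{\bdu}))=\ini(\bfI_2(\calG))\KK[\bfT_{\calA_{\bdu}}]$; since also $\bfI_2(\calA_{\bdu})\supseteq\bfI_2(\calG)\KK[\bfT_{\calA_{\bdu}}]$ and the two ideals have the same initial ideal, they are equal, so $\KK[\bfT_{\calA_{\bdu}}]/\bfI_2(\calA_{\bdu})\cong (\KK[\bfT_{\calG}]/\bfI_2(\calG))[T_{\bdu}]$ is a polynomial extension of a domain, hence a domain. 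If $\bdu$ is a normal point, I invoke \Cref{prime-induction} for the pair $\calG\subset\calA_{\bdu}$: here $\bfI_2(\calA_{\bdu})$ is Cohen--Macaulay, hence unmixed, by the Cohen--Macaulayness established in Section 4; the codimension jump $\codim\bfI_2(\calA_{\bdu})=\codim\bfI_2(\calG)+1$ holds by item (ii) of \Cref{HeightFormula} (its hypotheses being satisfied since the relevant Stanley--Reisner complexes are pure); and $\bfI_2(\calG)$ is prime by the inner induction hypothesis. Thus it all comes down to showing that the localization $\bigl(\KK[\bfT_{\calA_{\bdu}}]/\bfI_2(\calA_{\bdu})\bigr)[T_{\bdu}^{-1}]$ is a domain.

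To treat this localization I use two maps. First, the defining map of the toric ring $\calF(I_{\calA_{\bdu}})$ factors through $\bfI_2(\calA_{\bdu})\subseteq J_{\calA_{\bdu}}$ and, after inverting $T_{\bdu}$, yields a surjection $\bigl(\KK[\bfT_{\calA_{\bdu}}]/\bfI_2(\calA_{\bdu})\bigr)[T_{\bdu}^{-1}]\onto \calF(I_{\calA_{\bdu}})[T_{\bdu}^{-1}]$, whose target is a localization of a domain, hence a domain. Second --- the crux --- I must produce a finite set $S_{0}\subseteq\calA_{\bdu}\setminus\{\bdu\}$, an ``axis/spine system'' adapted to $\bdu$, with the following two properties: (a) modulo $\bfI_2(\calA_{\bdu})[T_{\bdu}^{-1}]$ each variable $T_{\bdv}$ rewrites as a Laurent monomial in $\{T_{\bdw}:\bdw\in S_{0}\}\cup\{T_{\bdu}^{\pm 1}\}$, so that the natural map from the Laurent polynomial ring $L_{0}\coloneqq \KK[T_{\bdu}^{\pm 1}][T_{\bdw}:\bdw\in S_{0}]$ onto the localization is surjective; and (b) the exponent vectors of the images of $T_{\bdu}$ and of the $T_{\bdw}$, $\bdw\in S_{0}$, in $\calF(I_{\calA_{\bdu}})[T_{\bdu}^{-1}]$ are $\ZZ$-linearly independent, so that the composite $L_{0}\to\bigl(\KK[\bfT_{\calA_{\bdu}}]/\bfI_2(\calA_{\bdu})\bigr)[T_{\bdu}^{-1}]\to\calF(I_{\calA_{\bdu}})[T_{\bdu}^{-1}]$ is injective. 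Granting (a) and (b), the first arrow of this composite is surjective by (a) and injective by (b), hence an isomorphism, and therefore $\bigl(\KK[\bfT_{\calA_{\bdu}}]/\bfI_2(\calA_{\bdu})\bigr)[T_{\bdu}^{-1}]\cong L_{0}$ is a domain, as required.

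The main obstacle is establishing (a): choosing $S_{0}$ and showing that the $2$-minor relations incident to $\bdu$ that already lie in $\bfI_2(\calA_{\bdu})$ suffice to reduce \emph{every} variable to such a Laurent monomial. This is exactly where normality of $\bdu$ and the projection property are used --- normality furnishes an $x$-switch partner for $\bdu$, and the projection property (through the collapse $\calZ_{6}=\calZ_{6}^{1}$ and the covering condition of \Cref{shadow}) furnishes enough $y$- and $z$-switches --- and one organizes the reduction zone by zone along the $\calZ$-zones with respect to $\bdu$, much as the colon ideal $I_{\Delta(\calA_{\bdu})}:T_{\bdu}$ was dissected in the proof of \Cref{VD-InductionOrder}. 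The linear-independence bookkeeping in (b), and the accompanying count matching $|S_{0}|+1$ with the dimension given by \eqref{dim-Au} via \Cref{DimI2} and \Cref{DimFiber}, I expect to be routine once $S_{0}$ is pinned down; pinning down $S_{0}$ and verifying (a) is the delicate point.
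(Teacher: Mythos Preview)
Your inductive scaffolding is exactly the paper's: the double induction on $a_{\calD}$ and along the induction order, the split into phantom and normal points, and the appeal to \Cref{prime-induction} with unmixedness and the codimension jump from \Cref{HeightFormula}. The divergence---and the gap---is in how you propose to show that the localization $(\KK[\bfT_{\calA_{\bdu}}]/\bfI_2(\calA_{\bdu}))[T_{\bdu}^{-1}]$ is a domain.

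You aim to exhibit this localization as a Laurent polynomial ring $L_{0}$. But it is not one in general. The paper instead constructs an explicit $\KK$-algebra automorphism $\varphi$ of $\KK[\bfT_{\calA_{\bdu}}][T_{\bdu}^{-1}]$ (a shear sending $T_{\bdv}$ to $T_{\bdv}$ plus a Laurent monomial supported on the ``axes through $\bdu$'') and proves that
\[
\varphi\bigl(\bfI_2(\calA_{\bdu})\bigr)=\frakb'+\bfI_2(\calH),
\]
where $\frakb'=(T_{\bdv}:\bdv\in\calZ_2^{\ge 2}\cup\calZ_4\cup\calZ_5^{\ge 2})$ and $\calH\subset\calA_{\bdu}^{+}$ is the subdiagram from the proof of \Cref{VD-InductionOrder}. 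Hence the localization is isomorphic to
\[
\bigl(\KK[\bfT_{\calH}]/\bfI_2(\calH)\bigr)[T_{\bdv}:\bdv\in\calE\setminus\calH][T_{\bdu}^{\pm 1}],
\]
a polynomial extension (with $T_{\bdu}$ inverted) of $\KK[\bfT_{\calH}]/\bfI_2(\calH)$. This ring is regular only when $\bfI_2(\calH)=0$, which is almost never the case; so your step (a) cannot be completed. Concretely, the $2$-minors incident to $\bdu$ rewrite precisely the variables indexed by $\calZ_2^{\ge 2}\cup\calZ_4\cup\calZ_5^{\ge 2}$; the remaining variables (those indexed by $\calH$, e.g.\ in $\calZ_3^{\ge 2}$, $\calZ_5^{1}$, $\calZ_6^{1}$, $\calZ_1$) are tied together only by the relations $\bfI_2(\calH)$, and there is no spine $S_0$ in which they become Laurent monomials modulo $\bfI_2(\calA_{\bdu})$.

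The paper closes the argument by a second use of induction: one checks (as in \Cref{calculation-lemma}) that $\calH=\widetilde{\calA}_{(1,j_0+1,1)}$ for an auxiliary three-dimensional Ferrers diagram $\widetilde{\calD}\subseteq\calD$ that still satisfies the projection property, so $\bfI_2(\calH)$ is prime by the inner induction applied to a strictly smaller configuration. That is the missing idea: the localization is not itself a Laurent polynomial ring, but rather a benign extension of a quotient by another $\bfI_2$ to which the whole induction applies again.
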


\begin{proof}
    We prove by the induction on $a_{\calD}$. When $a_{\calD}=1$, this can be
    reduced to the two-dimensional case in \cite[Proposition 3.5]{CNPY}. Thus,
    in the following, we assume that $a_{\calD}\ge 2$. We proceed by removing
    the points in the $x=1$ layer, using the induction order given in Section
    3. Recall that $\calC\coloneqq \Set{(1,j,k)\in \calD:k\le c_{\calD^{\ge
                2}}}$.

    \bigskip
    \textbf{First stage} 

    Suppose that $\bdu=(i_0=1,j_0,k_0)\in \calC$. Recall that $\calA_{\bdu}$ is
    obtained from $\calD$ by removing those points in $\calC$ that are
    lexicographically before $\bdu$. And
    $\calA_{\bdu}^{+}=\calA_{\bdu}\setminus \bdu$.  We want to prove that
    $\bfI_2(\calA_{\bdu})$ is a prime ideal.  The minimal case is when we
    remove all the points in $\calC$. We will deal it in the second stage.
    Other than this minimal case, by induction, we may assume that
    $\bfI_2(\calA_{\bdu}^{+})$ is a prime ideal.  We may assume that
    $\bfI_2(\calA_{\bdu}^{+})\ne \bfI_2(\calA_{\bdu})$. Whence, $\bdu$ is a
    normal point and $\codim \bfI_2(\calA_u)=\codim \bfI_2(\calA_u^{+})+1$.
    Using \Cref{HeightFormula}, \Cref{VD-InductionOrder} and its proof, we are
    reduced to showing that
    $(\KK[\bfT_{\calA_{\bdu}}]/\bfI_2(\calA_{\bdu}))[T_{\bdu}^{-1}]$ is a
    domain, by \Cref{prime-induction}.

    Consider the $\KK$-algebra homomorphism 
    $
    \varphi:(\KK[\bfT_{\calA_{\bdu}}])[T_{\bdu}^{-1}]\to
    (\KK[\bfT_{\calA_{\bdu}}])[T_{\bdu}^{-1}] 
    $
    defined by 
    \[
        T_{ijk}\mapsto 
        \begin{cases}
            T_{ijk}+T_{ijk_0}T_{i_0j_0k}T_{\bdu}^{-1}, & \text{ if $(i,j,k)\in \calZ_2^{\ge 2}(\calA_{\bdu})$,}\\
            T_{ijk}+T_{ij_0k_0}T_{i_0jk_0}T_{i_0j_0k}T_{\bdu}^{-2}, & \text{ if $(i,j,k)\in \calZ_4(\calA_{\bdu})$,}\\
            T_{ijk}+T_{ij_0k}T_{i_0jk_0}T_{\bdu}^{-1}, & \text{ if $(i,j,k)\in \calZ_5^{\ge 2}(\calA_{\bdu})$,}\\
            T_{ijk}, & \text{ otherwise.}
        \end{cases}
    \]
    Here, by $\calZ_*(\calA_{\bdu})$, we mean $\calZ_*(\calD)\cap \calA_{\bdu}$.
    And the six zones are partitioned with respect to $\bdu$.  The above map
    gives an isomorphism whose inverse map is
    \[
        T_{ijk}\mapsto 
        \begin{cases}
            T_{ijk}-T_{ijk_0}T_{i_0j_0k}T_{\bdu}^{-1}, & \text{ if $(i,j,k)\in \calZ_2^{\ge 2}(\calA_{\bdu})$,}\\
            T_{ijk}-T_{ij_0k_0}T_{i_0jk_0}T_{i_0j_0k}T_{\bdu}^{-2}, & \text{ if $(i,j,k)\in \calZ_4(\calA_{\bdu})$,}\\
            T_{ijk}-T_{ij_0k}T_{i_0jk_0}T_{\bdu}^{-1}, & \text{ if $(i,j,k)\in \calZ_5^{\ge 2}(\calA_{\bdu})$,}\\
            T_{ijk}, & \text{ otherwise.}
        \end{cases}
    \]

    Take arbitrary $(i,j,k)\in \calZ_2^{\ge 2}(\calA_{\bdu})$, we have
    $g_1\coloneqq T_{\bdu}T_{ijk}-T_{ijk_0}T_{i_0j_0k}\in
    \bfI_2(\calA_{\bdu})$. Notice that
    $\varphi(g_1)=T_{\bdu}(T_{ijk}+T_{ijk_0}T_{i_0j_0k}T_{\bdu}^{-1})-T_{ijk_0}T_{i_0j_0k}=T_{\bdu}T_{ijk}$.
    So $T_{ijk}\in \varphi(\bfI_2(\calA_{\bdu}))$.

    If we take the ideal
    \begin{align*}
        \frakb'\coloneqq  \Big(T_{ijk}: (i,j,k)\in \calZ_2^{\ge
            2}(\calA_{\bdu})\cup \calZ_4(\calA_{\bdu})\cup \calZ_5^{\ge
            2}(\calA_{\bdu})\Big)
    \end{align*}
    in $\KK[\bfT_{\calA_{\bdu}}][T_{\bdu}^{-1}]$, then by similar arguments, we
    have $\frakb'\subseteq \varphi(\bfI_2(\calA_{\bdu}))$.

    Recall that in the proof of \Cref{VD-InductionOrder}, we defined
    \begin{align*}
        \calE\coloneqq\calA_{\bdu}^{+}\setminus \left(\calZ_2^{\ge 2}(\calA_{\bdu})\cup
            \calZ_4(\calA_{\bdu})\cup \calZ_5^{\ge 2}(\calA_{\bdu})\right),
        \intertext{and} 
        \calH\coloneqq\calE\setminus\Set{(1,j_0,k): k_0<k\le \min
            (\gamma,c_{\calD^{\ge 2}})},
    \end{align*} 
    where $\gamma\coloneqq c_{\calD'}(\bdu)$.  It is clear that
    $\frakb'+\bfI_2(\calH)\subseteq \varphi(\bfI_2(\calA_{\bdu}))$. We claim
    that they are equal:
    \[
    \frakb'+\bfI_2(\calH)= \varphi(\bfI_2(\calA_{\bdu})).
    \]

    Notice that
    \begin{itemize}
        \item if $(i,j,k)\in \calZ_2^{\ge 2}(\calA_{\bdu})$, then $\varphi(T_{ijk})\equiv T_{ijk_0}T_{i_0j_0k}T_{\bdu}^{-1} \mod \frakb'$;
        \item if $(i,j,k)\in \calZ_5^{\ge 2}(\calA_{\bdu})$, then $ \varphi(T_{ijk})\equiv T_{ij_0k}T_{i_0jk_0}T_{\bdu}^{-1} \mod \frakb'$;
        \item if $(i,j,k)\in \calZ_4(\calA_{\bdu})$, then  
            \begin{align*}
                \varphi(T_{ijk})
                &\equiv  \varphi(T_{ijk_0}T_{i_0j_0k})T_{\bdu}^{-1}
                \equiv \varphi(T_{ij_0k}T_{i_0jk_0})T_{\bdu}^{-1}\\
                &\equiv  \varphi(T_{i_0jk}T_{ij_0k_0})T_{\bdu}^{-1}
                \equiv T_{ij_0k_0}T_{i_0jk_0}T_{i_0j_0k}T_{\bdu}^{-2}
                \mod \frakb';
            \end{align*}
        \item $\bfI_2(\calE)=\bfI_2(\calH)$.
    \end{itemize}

    Roughly speaking, the reduction by $\frakb'$ has the effect of projecting
    the points in the above three designated zones to the coordinate planes and
    axes centered at $\bdu$, where the landing points lay in $\calH$. Thus,
    when dealing with the $2$-minors, in the following situation
    \begin{itemize}
        \item[(\dag-1)] the points of both columns of the underlying $2\times 2$ matrix
            belong to $\frakb'$, or 
        \item[(\dag-2)] the points of one column belong to $\frakb'$, while the points in
            the other belong to $\calH$,
    \end{itemize}
    then, we should be fine after factoring common factors; we will explain
    this by an example later in (a)-(iii).  Thus, to show the above claim of equality, it
    suffices to consider the irregular generators $\bfI_{2,*}(\bdv_1,\bdv_2)$.
    Say, $\bdv_1=(i_1,j_1,k_1)$ and $\bdv_2=(i_2,j_2,k_2)$. Since at this stage
    $c_{\calD^{\ge 2}}\ge k_0$, by the projection property, we have
    $\calZ_6^1(\calD)=\calZ_6(\calD)$.

    \begin{enumerate}[a]
        \item We consider the case when $*=x$.

            \begin{enumerate}[i]
                \item We first investigate $2$-minors that involve
                    $\bdu=\bdv_1$. Notice that if $\bdu$ can exchange
                    $x$-coordinates with $\bdv_2$, then 
                    \[
                        \bdv_2\in \calZ_4^{\ge
                            2}\cup \calZ_5^{\ge 2}\cup \Set{(1,j_0,k)|k_0<k\le
                            \min(\gamma,c_{\calD^{\ge 2}})}.
                    \]  
                    After reductions by
                    $\frakb'$, the only irregular $2$-minors involving
                    $T_{\bdu}$ take the form $g_2\coloneqq
                    T_{\bdu}T_{\bdv}-T_{i_2j_0k_0}T_{i_0j_2k_2}$ with
                    $\bdv_2\in \calZ_5^{\ge 2}(\calA_{\bdu})$.  We have 
                    \begin{align*} 
                        \varphi(g_2)& \equiv
                        T_{\bdu}(T_{i_2j_0k_2}T_{i_0j_2k_0}T_{\bdu}^{-1})-T_{i_2j_0k_0}T_{i_0j_2k_2}\mod \frakb'\\
                        &= T_{i_2j_0k_2}T_{i_0j_2k_0}-T_{i_2j_0k_0}T_{i_0j_2k_2} ,
                    \end{align*}
                    and $T_{i_2j_0k_2}T_{i_0j_2k_0}-T_{i_2j_0k_0}T_{i_0j_2k_2}\in \bfI_2(\calH)$.

                \item The other irregular case is, by symmetry, when $\bdv_1\in
                    \calZ_5^{\ge 2}(\calA)$, $\bdv_2\in \calZ_5^1(\calA)$.
                    Thus, $i_1\ge 2$ and $i_2=1=i_0$. By symmetry, we may also
                    assume that $j_1\le j_2$.  Now,
                    $\bfI_{2,x}(\bdv_1,\bdv_2)=T_{\bdv_1}T_{\bdv_2}-T_{i_2j_1k_1}T_{i_1j_2k_2}$.
                    Therefore,  
                    \[
                        \varphi(\bfI_{2,x}(\bdv_1,\bdv_2)) \equiv
                        T_{\bdu}^{-1}(T_{i_1j_0k_1}T_{i_0j_1k_0}T_{i_2j_2k_2}
                        -T_{i_2j_1k_1}T_{i_1j_0k_2}T_{i_0j_2k_0}) \mod \frakb'.
                    \]
                    However,
                    \begin{align*}
                         & T_{i_1j_0k_1}T_{i_0j_1k_0}T_{i_2j_2k_2} -T_{i_2j_1k_1}T_{i_1j_0k_2}T_{i_0j_2k_0} \\
                         &\qquad = T_{i_0j_1k_0}(T_{i_1j_0k_1}T_{i_2j_2k_2}-T_{i_1j_0k_2}T_{i_2j_2k_1})\\
                         &\qquad + T_{i_1j_0k_2}(T_{i_0j_1k_0}T_{i_2j_2k_1}-T_{i_2j_1k_1}T_{i_0j_2k_0})
                         \in \bfI_2(\calH).
                    \end{align*}
                    Notice that $T_{i_2j_2k_1}$ exists with $(i_2,j_2,k_1)\in \calZ_5^1$.

                \item All other $2$-minors are regular in the sense of (\dag) and can be reduced by $\frakb'$ to $2$-minors in $\bfI_2(\calH)$. 

                    We first look at an example in (\dag-1). Say $\bdv_1\in \calZ_2^{\ge 2}$ while $\bdv_2\in \calZ_5^{\ge 2}$. Suppose that $\bdv_1$ can exchange $x$-coordinates with $\bdv_2$. Now,
                    \begin{align*}
                        \varphi(\bfI_{2,x}(\bdv_1,\bdv_2)) &=\varphi(T_{i_1j_1k_1}T_{i_2j_2k_2}-T_{i_2j_1k_1}T_{i_1j_2k_2}) \\
                        &\equiv (T_{i_1j_1k_0}T_{i_0j_0k_1}T_{\bdu}^{-1})(T_{i_2j_0k_2}T_{i_0j_2k_0}T_{\bdu}^{-1})\\
                        &\qquad -(T_{i_2j_1k_0}T_{i_0j_0k_1}T_{\bdu}^{-1})(T_{i_1j_0k_2}T_{i_0j_2k_0}T_{\bdu}^{-1}) \mod \frakb'\\
                        &= T_{\bdu}^{-2}T_{i_0j_0k_1}T_{i_0j_2k_0}(T_{i_1j_1k_0}T_{i_2j_0k_2}- T_{i_2j_1k_0}T_{i_1j_0k_2}) \in \bfI_2(\calH).
                    \end{align*}
                    Therefore, still,  $\varphi(\bfI_{2,x}(\bdv_1,\bdv_2))\in  \frakb'+\bfI_2(\calH)$.

                    We may also look at one example in (\dag-2). Say $\bdv_1\in \calZ_5^1$ while $\bdv_2\in \calZ_4^{\ge 2}$. If $\bdv_1$ can exchange $x$-coordinates with $\bdv_2$, then $(i_2,j_1,k_1),(i_1,j_2,k_2)\in \calA_{\bdu}$. Now, since $i_1=1=i_0$, 
                    \begin{align*}
                        \varphi(\bfI_{2,x}(\bdv_1,\bdv_2)) &=\varphi(T_{i_1j_1k_1}T_{i_2j_2k_2}-T_{i_2j_1k_1}T_{i_1j_2k_2}) \\
                        &\equiv T_{i_1j_1k_1} (T_{i_0j_0k_2}T_{i_0j_2k_0}T_{i_2j_0k_0}T_{\bdu}^{-2})\\
                        &\qquad-(T_{i_2j_0k_1}T_{i_0j_1k_0}T_{\bdu}^{-1})(T_{i_1j_2k_0}T_{i_0j_0k_2}T_{\bdu}^{-1}) \mod \frakb'\\
                        &=(T_{\bdu}^{-1}T_{i_0j_0k_2})T_{\bdu}^{-1}(T_{i_1j_1k_1} T_{i_0j_2k_0}T_{i_2j_0k_0}-T_{i_2j_0k_1}T_{i_0j_1k_0}T_{i_1j_2k_0})\\
                        &\equiv (T_{\bdu}^{-1}T_{i_0j_0k_2}) \varphi(\bfI_{2,x}( \bdv_1,(i_2,j_2,k_0))) \mod \frakb'.
                    \end{align*}
                    Therefore, by (ii), $\varphi(\bfI_{2,x}(\bdv_1,\bdv_2))\in  \frakb'+\bfI_2(\calH)$.  
             
                    All other cases are similar, hence omitted.
            \end{enumerate}

        \item For the case when $*=y$ or $z$, all $2$-minors are like in (a)-(iii), and can similarly be reduced by $\frakb'$ to $2$-minors in $\bfI_2(\calH)$.
    \end{enumerate}

    Thus, we have shown that
    $\frakb'+\bfI_2(\calH)=\varphi(\bfI_2(\calA_{\bdu}))$. Consequently,
    it suffices to show that $\bfI_2(\calH)\subseteq \KK[\bfT_{\calH}]$ is a
    prime ideal.  Notice that we have seen in \Cref{calculation-lemma} that
    $\calH=\widetilde{\calA}_{(1,j_0+1,1)}$, which is a smaller case. Thus, by
    induction, the primeness of $\bfI_2(\calH)$ is guaranteed.

    \bigskip
    \textbf{Second stage}

    After the first stage, we are dealing with the case where we removed all the points in $\calC$.  Now we flip $\calD$ to get $\calS(\calD)$, again, written as $\calD'$. Using the notation in the proof of Theorem \ref{VD-InductionOrder}, the current case is $\calS(\calD\setminus\calC)=\calD'_{(1,c_{\calD^{\ge 2}}+1,1)}$.  Similar to the first stage, we will prove by induction on removing lexicographically initial points in the $x=1$ layer of $\calD'_{(1,c_{\calD^{\ge 2}}+1,1)}$. The minimal case will be when we remove all the $x=1$ points and this is settled by induction. Notice that in the proof of the first stage, the ``ceiling restriction'' of choosing points $(1,j,k)$ with $k\le c_{\calD^{\ge 2}}$ is only used to ensure that $\calZ_6=\calZ_6^1$.  In the current case of $\calD'_{(1,c_{\calD^{\ge 2}}+1,1)}$, for any point $\bdu$ in the $x=1$ layer, we will automatically get $\calZ_6=\calZ_6^1$. So the proof is similar and easier. 
\end{proof}

\section{Blowup algebras}

It is time for the main theorems of this work. Indeed, we show that the ideal $\bfI_2(\calD)$ is the presentation ideal of the special fiber ring $\calF(I_{\calD})$. Since $\bfI_2(\calD)$ has nice properties, so does the special fiber ring $\calF(I_{\calD})$. Moreover, we can extend the result to the Rees algebra $\calR(I_{\calD})$ easily because the ideal $I_{\calD}$ satisfies $\ell$-exchange property (\Cref{L-exchange-def}).  

\begin{Theorem}
    \label{ToricIsKoszul} 
    Let $\calD$ be a three-dimensional Ferrers diagram which satisfies the projection property.  Then the special fiber ring $\calF(I_{\calD})$ is a Koszul Cohen--Macaulay normal domain.
\end{Theorem}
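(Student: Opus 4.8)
The plan is to identify the $2$-minors ideal $\bfI_2(\calD)$ with the toric ideal $J_{\calD}$; once $\bfI_2(\calD)=J_{\calD}$, every assertion about $\calF(I_{\calD})=\KK[\bfT_{\calD}]/J_{\calD}$ can be read off from properties of $\bfI_2(\calD)$ established in the previous sections.

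First I would upgrade the inclusion $\bfI_2(\calD)\subseteq J_{\calD}$ to an equality. The containment is clear. For equality, observe that $\KK[\bfT_{\calD}]/\bfI_2(\calD)$ has the same Hilbert function as the Stanley--Reisner ring of $\Delta(\calD)$, and that by \Cref{VD-InductionOrder} the complex $\Delta(\calD)$ is pure of dimension $a_{\calD}+b_{\calD}+c_{\calD}-3$; hence $\dim\KK[\bfT_{\calD}]/\bfI_2(\calD)=a_{\calD}+b_{\calD}+c_{\calD}-2$. On the other hand $\dim\calF(I_{\calD})=\dim\KK[\bfT_{\calD}]/J_{\calD}=a_{\calD}+b_{\calD}+c_{\calD}-2$ by \Cref{DimFiber}. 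By \Cref{I2prime} the ideal $\bfI_2(\calD)$ is prime, so $A\coloneqq\KK[\bfT_{\calD}]/\bfI_2(\calD)$ is an affine domain over $\KK$, and $\frakq\coloneqq J_{\calD}/\bfI_2(\calD)$ is a prime ideal of $A$ with $A/\frakq\cong\KK[\bfT_{\calD}]/J_{\calD}$. If $\frakq$ were nonzero it would have positive height, since in a domain $(0)$ is the unique minimal prime; then $\dim A/\frakq<\dim A$, contradicting the two equal dimensions computed above. Therefore $\frakq=(0)$, that is, $\bfI_2(\calD)=J_{\calD}$.

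With this identification the theorem follows immediately. Since $J_{\calD}=\bfI_2(\calD)$ has a quadratic Gr\"obner basis with respect to the lexicographic order by \Cref{I2lq}, the quotient $\calF(I_{\calD})=\KK[\bfT_{\calD}]/J_{\calD}$ is Koszul by \cite[Theorem 6.7]{MR2850142}. It is Cohen--Macaulay because $\KK[\bfT_{\calD}]/\bfI_2(\calD)$ was shown to be Cohen--Macaulay in Section~4, and it is a domain since $J_{\calD}$ is prime (equivalently, $\calF(I_{\calD})$ is a subring of the polynomial ring $R$, as noted in Section~2). Finally, the lexicographic initial ideal $\ini(J_{\calD})=\ini(\bfI_2(\calD))$ is a squarefree monomial ideal by \Cref{I2lq}, so $\calF(I_{\calD})$ is a normal semigroup ring by the classical criterion of Sturmfels that a toric ideal admitting a squarefree initial ideal defines a normal semigroup ring (see \cite{MR1363949}).

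The step I expect to be the main obstacle is the equality $\bfI_2(\calD)=J_{\calD}$, and inside it the precise dimension bookkeeping: one must be sure that $\Delta(\calD)$ is pure of exactly the expected dimension $a_{\calD}+b_{\calD}+c_{\calD}-3$, which is the full content of \Cref{VD-InductionOrder} and absorbs all the combinatorial work of the preceding sections, and that \Cref{DimFiber} returns the matching value $a_{\calD}+b_{\calD}+c_{\calD}-2$ for $\dim\calF(I_{\calD})$. Granting these, and with primeness from \Cref{I2prime} in hand, the remaining properties are one-line consequences of results already proved or cited, applied to $\bfI_2(\calD)$.
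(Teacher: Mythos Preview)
Your proof is correct and follows essentially the same route as the paper: establish $\bfI_2(\calD)=J_{\calD}$ by comparing codimensions/dimensions of two prime ideals in a chain (via \Cref{I2prime}, \Cref{DimI2}/\Cref{VD-InductionOrder}, and \Cref{DimFiber}), then read off Cohen--Macaulayness from \Cref{VD-InductionOrder}, Koszulness from the quadratic Gr\"obner basis of \Cref{I2lq}, and normality from the squarefreeness of the initial ideal. The only cosmetic difference is that the paper cites \cite[Theorem 5.16]{MR2850142} for the normality step where you invoke the equivalent Sturmfels criterion from \cite{MR1363949}.
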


\begin{proof}
    Notice that $\bfI_2(\calD)\subseteq J_\calD$. Since these two homogeneous ideals are prime and have the same codimension by \Cref{I2prime}, \Cref{DimI2} and \Cref{DimFiber}, they actually coincide. The Cohen--Macaulay property follows from \Cref{VD-InductionOrder}. Since $\bfI_2(\calD)$ has a squarefree initial ideal, the normal property follows from \cite[Theorem 5.16]{MR2850142}.  The Koszul property follows from \cite[Theorem 6.7]{MR2850142}.
\end{proof}

Next, we consider the Rees algebra $\calR(I_{\calD})$ of $I_{\calD}$. The
strategy is similar to that in \cite[Section 6]{arXiv:1706.07462}.

\begin{Definition}
    [{\cite[Definition 4.1]{MR2195995}}]
    \label{L-exchange-def}
    Let $I=(f_1,\dots,f_m)\subset R=\KK[x_1,\dots,x_n]$ be a monomial ideal
    generated in one degree.  Let $\KK[\bfT]\coloneqq \KK[T_1,\dots,T_m]$  and
    $J$ be the toric ideal of $I$, i.e., the kernel of the surjective
    homomorphism 
    \[
        \psi:\KK[\bfT]\to \KK[f_1,\dots,f_m],
    \]
    defined by $\psi(T_i)=f_i$ for all $i$. Let $<$ be a monomial order on
    $\KK[\bfT]$. A monomial $\bfT^{\bda}$ in $\KK[\bfT]$ is called a
    \emph{standard monomial} of $J$ with respect to $<$, if it does not belong
    to the initial ideal of $J$.

    The monomial ideal $I$ satisfies the \emph{$\ell$-exchange property} with
    respect to the monomial order $<$ on $\KK[\bfT]$, if the following
    condition is satisfied: let $\bfT^{\bda}$ and $\bfT^{\bdb}$ be any two
    standard monomials of $J$ with respect to $<$ of the same degree, with
    $u=\psi(\bfT^{\bda})$ and $v=\psi(\bfT^{\bdb})$ satisfying
    \begin{enumerate}[i]
        \item $\deg_{x_t}(u)=\deg_{x_t}(v)$ for $t=1,\dots,q-1$ with $q\le n-1$,
        \item $\deg_{x_q}(u)<\deg_{x_q}(v)$.
    \end{enumerate}
    Then there exists an integer $k$, and an integer $q<j\le n$ such that
    $x_qf_k/x_j\in I$.
\end{Definition}

Similar to \cite[Example 4.2]{MR2195995}, we have

\begin{Lemma}
    \label{LEx}
    Let $\calD$ be a three-dimensional Ferrers diagram. Then the Ferrers ideal
    $I_{\calD} \subset R=\KK[x_1,\dots,x_m,y_1,\dots,y_n,z_1,\dots,z_p]$
    satisfies the $\ell$-exchange property with respect to any monomial order
    $<$ on $\KK[\bfT]$.
\end{Lemma}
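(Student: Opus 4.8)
The goal is to verify that the Ferrers ideal $I_{\calD}$ satisfies the $\ell$-exchange property of Definition~\ref{L-exchange-def} with respect to an arbitrary monomial order $<$ on $\KK[\bfT]$. I would follow the template of \cite[Example~4.2]{MR2195995} very closely, and the key observation is that the $\ell$-exchange condition in our situation does not actually depend on the monomial order: it is a purely combinatorial statement about the generators of $I_{\calD}$. Concretely, one takes two standard monomials $\bfT^{\bda}$ and $\bfT^{\bdb}$ with $u=\psi(\bfT^{\bda})$ and $v=\psi(\bfT^{\bdb})$ of the same degree (so both are products of, say, $\ell$ of the generators $x_iy_jz_k$), satisfying $\deg_{x_t}(u)=\deg_{x_t}(v)$ for $t=1,\dots,q-1$ and $\deg_{x_q}(u)<\deg_{x_q}(v)$ for some variable $x_q$ in the list $x_1,\dots,x_m,y_1,\dots,y_n,z_1,\dots,z_p$. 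I would then need to produce a generator $f_k$ of $I_{\calD}$ and a variable $x_j$ with $q<j$ such that $x_q f_k/x_j \in I_{\calD}$.

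\textbf{Key steps.} First, I would set up notation: a monomial of degree $\ell$ in the image of $\psi$ is a product $\prod_{s=1}^{\ell} x_{i_s} y_{j_s} z_{k_s}$ with each $(i_s,j_s,k_s)\in\calD$. The condition $\deg_{x_q}(u)<\deg_{x_q}(v)$ together with equality of all earlier $x$-degrees forces $v$ to use the variable $x_q$ strictly more often than $u$ does. I would split into the three cases according to whether $x_q$ belongs to the $x$-block, the $y$-block, or the $z$-block of variables. Say $x_q$ lies in the $x$-block, i.e. $x_q = x_{i_0}$ for some index $i_0$; since $v$ uses $x_{i_0}$ and $u$ uses it strictly fewer times, and $u,v$ have the same total degree, there must be a generator $x_{i_0}y_j z_k$ appearing in the factorization of $v$ but with $u$ "short" on the $x_{i_0}$-slot, so that $u$ contains a generator $x_{i'}y_j z_k$ with $i' > i_0$ that can be replaced. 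This is where the Ferrers property enters: because $(i',j,k)\in\calD$ with $i'\ge i_0$ forces $(i_0,j,k)\in\calD$ (Definition~\ref{3F}), the monomial $x_{i_0}y_jz_k = x_{i_0}\cdot(x_{i'}y_jz_k)/x_{i'}$ is itself a generator of $I_{\calD}$, and one takes $f_k = x_{i'}y_jz_k$ and $x_j = x_{i'}$, noting $i' > i_0 = q$ so $j > q$. The $y$-block and $z$-block cases are symmetric.

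\textbf{The main obstacle.} The technical care goes into the bookkeeping that guarantees the needed generator actually appears in $u$ with an index $i'$ \emph{strictly greater} than $q$, rather than somewhere among the already-balanced coordinates; this is a standard pigeonhole/averaging argument over the $\ell$ factors, using that the degrees in $x_1,\dots,x_{q-1}$ already match. One must be slightly attentive to the fact that the variables come in three named blocks and the "index $j$ with $q<j\le n$" in Definition~\ref{L-exchange-def} refers to position in the full concatenated variable list, so when $x_q$ is (say) a $y$-variable the witness $x_j$ should be sought among $y$-variables with larger subscript (which automatically have larger position). I expect no genuine difficulty beyond this indexing: the content is entirely the Ferrers downward-closure property of Definition~\ref{3F}, exactly as in \cite[Example~4.2]{MR2195995}, and the independence from the monomial order is immediate since the conclusion never mentions $<$. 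I would close by remarking that, combined with \Cref{I2lq} (the presentation ideal $\bfI_2(\calD)$ is lexicographically quadratic) and \cite[Theorem~5.4]{MR2195995}, the $\ell$-exchange property yields the defining ideal of $\calR(I_{\calD})$, its fiber-type property, and its Koszulness — but that is the job of \Cref{ReesIdeal}, not of this lemma.
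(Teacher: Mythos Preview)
Your approach is correct and is essentially the paper's proof: the paper simply writes ``without loss of generality $q\le m$'' in place of your three-block case split, then uses $\sum_{i=1}^m\deg_{x_i}(u)=\tfrac{1}{3}\deg(u)=\tfrac{1}{3}\deg(v)=\sum_{i=1}^m\deg_{x_i}(v)$ together with the hypotheses to locate a factor $f_\delta=x_{i'}y_{*}z_{*}$ of $u$ with $i'>q$, and invokes the Ferrers property to get $x_qf_\delta/x_{i'}\in I_\calD$. One small wobble to fix in your write-up: in your Key steps you claim $u$ contains a factor $x_{i'}y_jz_k$ with the \emph{same} $y_j,z_k$ as some factor of $v$, which is neither needed nor justified; the pigeonhole only yields a factor of $u$ with first index $i'>q$, and the Ferrers property applies to whatever $y,z$ indices that factor happens to carry---your ``main obstacle'' paragraph already has this right.
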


\begin{proof}
    We will use the notation in the previous definition.  Without loss of
    generality, we may assume that $\deg_{x_t}(u)=\deg_{x_t}(v)$ for
    $t=1,\dots,q-1$ with $q\le m$ and $\deg_{x_q}(u)<\deg_{x_q}(v)$.  Since
    \[
        3\sum_{i=1}^m \deg_{x_i}(u) =\deg(u)=\deg(v)=3\sum_{i=1}^m
        \deg_{x_i}(v),
    \]
    we can see indeed that $q\le m-1$. Thus, we can find some $f_{\delta}$ and
    $q<j\le m$ with $\deg_{x_j}(f_{\delta})\ge 1$. Notice that
    $f_{\delta}=x_jy_*z_*$. Thus, $x_qf_\delta/x_j\in I_{\calD}$, since
    ${\calD}$ is a Ferrers diagram.
\end{proof}

The crucial weapon for our final result is the following.

\begin{Lemma}
    [{\cite[Theorem 5.1]{MR2195995}}]
    \label{ReesIdeal}
    Let $I=(f_1,\dots,f_m)\subset R=\KK[x_1,\dots,x_n]$ be a monomial ideal generated in one degree, satisfying the $\ell$-exchange property.  Let $<_{lex}$ be the lexicographic order on $R$ with respect to $x_1>\cdots > x_n$ and $<'$ an arbitrary monomial order on $\bfT$. Let $<_{lex}'$ be the product order of $<'$ and $<_{lex}$.  Then the reduced Gr\"obner basis of the Rees ideal of $I$ with respect to $<_{lex}'$ consists of all binomials belonging to the reduced Gr\"obner basis of $J$ with respect to $<'$ together with the binomials $x_iT_k-x_jT_l$, where $x_i>x_j$ with $x_if_k=x_jf_l$ and $x_j$ is the smallest variable for which $x_if_k/x_j$ belongs to $I$. In particular, $I$ is of fiber type.
\end{Lemma}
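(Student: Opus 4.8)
The plan is to write down an explicit Gr\"obner basis of the Rees ideal $L\coloneqq\ker(\psi)\subseteq R[\bfT]$ with respect to the product order $<_{lex}'$ and then read off all three assertions from it. Let $\calG_0$ be the reduced Gr\"obner basis of $J$ with respect to $<'$, regarded via $\KK[\bfT]\subseteq R[\bfT]$ as a set of binomials homogeneous in $\bfT$, and put
\[
    \calG_1\coloneqq\Set{x_iT_k-x_jT_l : x_i>x_j,\ x_if_k=x_jf_l,\ x_j\text{ the smallest variable with }x_if_k/x_j\in I}.
\]
The easy inclusion $\calG\coloneqq\calG_0\cup\calG_1\subseteq L$ comes first: since $J$ is $\bfT$-graded, $\varphi(g)=0$ gives $\psi(g)=t^{\deg g}\varphi(g)=0$ for $g\in\calG_0$, and $\psi(x_iT_k-x_jT_l)=(x_if_k-x_jf_l)t=0$. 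Under $<_{lex}'$ (the $x$-part compared by $<_{lex}$ first, ties broken by $<'$ on $\bfT$), the leading monomial of $g\in\calG_0$ is the pure $\bfT$-monomial $\ini_{<'}(g)$, and the leading monomial of each member of $\calG_1$ is $x_iT_k$ since $x_i>x_j$ dominates. The objective is then $\ini_{<_{lex}'}(L)\subseteq(\ini(\calG))$, which will establish that $\calG$ is a Gr\"obner basis of $L$.

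\textbf{The core step.} Because $\psi$ sends monomials to monomials, $L$ is spanned by binomials, so it suffices to treat $f=\bfx^{\bdc}\bfT^{\bda}-\bfx^{\bdd}\bfT^{\bdb}\in L$ with leading monomial $\bfx^{\bdc}\bfT^{\bda}$ and $\lvert\bda\rvert=\lvert\bdb\rvert$. If $\bfT^{\bda}\in\ini_{<'}(J)$ we are done; otherwise repeatedly subtracting suitable multiples of elements of $\calG_0$ (which keeps $f$ a binomial in $L$, leaves the leading monomial unchanged, and strictly lowers the trailing $\bfT$-monomial under $<'$) reduces to the case where $\bfT^{\bdb}$ is also standard for $<'$. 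Writing $u\coloneqq\prod_kf_k^{a_k}$ and $v\coloneqq\prod_lf_l^{b_l}$, membership in $L$ yields $\bfx^{\bdc}u=\bfx^{\bdd}v$ in $R$. If $\bfx^{\bdc}=\bfx^{\bdd}$ then $u=v$, hence $\bfT^{\bda}=\bfT^{\bdb}$ since two standard monomials of $J$ with the same $\varphi$-image coincide, forcing $f=0$; so $\bfx^{\bdc}\neq\bfx^{\bdd}$, and as $\bfx^{\bdc}\bfT^{\bda}$ is the leading monomial, $\bfx^{\bdc}>_{lex}\bfx^{\bdd}$. Let $q$ be the least index with $\deg_{x_q}(\bfx^{\bdc})\neq\deg_{x_q}(\bfx^{\bdd})$; then $x_q\mid\bfx^{\bdc}$, and comparing $x$-degrees in $\bfx^{\bdc}u=\bfx^{\bdd}v$ gives $\deg_{x_t}(u)=\deg_{x_t}(v)$ for $t<q$ and $\deg_{x_q}(u)<\deg_{x_q}(v)$. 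Applying the $\ell$-exchange property of $I$ to the standard monomials $\bfT^{\bda},\bfT^{\bdb}$ then produces $k$ with $T_k\mid\bfT^{\bda}$ and $j$ with $q<j\le n$ such that $x_qf_k/x_j\in I$, i.e.\ $x_qf_k=x_jf_l$ for some $l$. Hence the member of $\calG_1$ with first term $x_qT_k$ has leading monomial $x_qT_k$ (its minimal admissible denominator is $\le x_j<x_q$), and $x_qT_k\mid\bfx^{\bdc}\bfT^{\bda}=\ini(f)$, so $\ini(f)\in(\ini(\calG))$.

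\textbf{Reducedness and fiber type.} A routine verification, using that $\calG_0$ is already reduced and that the denominator $x_j$ in each member of $\calG_1$ is the smallest possible, shows that no leading monomial of $\calG$ divides another leading or a trailing monomial of $\calG$; thus $\calG$ is the reduced Gr\"obner basis of $L$, which is exactly the list in the statement. Finally, the members of $\calG_1$ are binomials linear in $\bfT$ lying in $L$, hence in the kernel $\ker(\alpha)$ of $R[\bfT]\onto\Sym(I)$, while $\calG_0\subseteq\ker(\varphi)R[\bfT]$; since $L=(\calG)$, we get $\ker(\psi)=\ker(\alpha)+\ker(\varphi)R[\bfT]$, i.e.\ $I$ is of fiber type.

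\textbf{Main obstacle.} The delicate part is the core step: performing the normal-form reduction so that \emph{both} $\bfT$-monomials become standard while retaining precise control of which term of $f$ is leading, and then extracting from the $\ell$-exchange property exactly the binomial $x_qT_k-x_jT_l$ whose leading term $x_qT_k$ divides $\ini(f)$ — in particular that the index $q$ coming from the degree comparison is the one for which $\ell$-exchange applies, and that $T_k$ really divides $\bfT^{\bda}$ — all the while keeping the two halves of the product order straight.
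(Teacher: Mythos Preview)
The paper does not supply its own proof of this lemma; it is quoted as \cite[Theorem 5.1]{MR2195995} and invoked as a black box. Your argument is the standard one from that reference and is essentially correct.

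One small remark: the version of the $\ell$-exchange property stated in \Cref{L-exchange-def} omits the clause that the index $k$ must satisfy $T_k\mid\bfT^{\bda}$ (equivalently $a_k\ne 0$), which is present in the original definition of Herzog--Hibi--Vladoiu. You tacitly use the complete form, and rightly so: without it the crucial divisibility $x_qT_k\mid \bfx^{\bdc}\bfT^{\bda}$ in your core step would not follow. That the paper itself relies on the full version is visible in the proof of \Cref{LEx}, where the index $\delta$ is implicitly chosen among the factors of $u$.
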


As an application, we have

\begin{Theorem}
    Let $\calD$ be a three-dimensional Ferrers diagram which satisfies the
    projection property.  Then the Rees algebra $\calR(I_{\calD})$ is Koszul and
    the ideal $I_{\calD}$ is of fiber type.
\end{Theorem}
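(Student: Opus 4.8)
The plan is to derive both conclusions from \Cref{LEx} and \Cref{ReesIdeal}, using the quadratic Gr\"obner basis of the toric ideal established earlier. First I would fix the lexicographic order $\prec_{lex}$ on $\KK[\bfT_{\calD}]$ used throughout the paper and recall that, by \Cref{I2lq} together with \Cref{ToricIsKoszul}, the special fiber ideal $J_{\calD}=\bfI_2(\calD)$ has a reduced Gr\"obner basis consisting entirely of quadratic binomials with respect to $\prec_{lex}$. By \Cref{LEx}, the Ferrers ideal $I_{\calD}$ satisfies the $\ell$-exchange property with respect to $\prec_{lex}$ (indeed with respect to any monomial order on $\KK[\bfT_{\calD}]$), so \Cref{ReesIdeal} applies with $I=I_{\calD}$, $J=J_{\calD}$ and $<'=\prec_{lex}$. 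Its last sentence immediately gives that $I_{\calD}$ is of fiber type, which is the second assertion.

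For the Koszul statement I would exploit the explicit description of the Gr\"obner basis furnished by \Cref{ReesIdeal}. Let $<_{lex}$ be the lexicographic order on $R$ (say with $x_1>\cdots>x_m>y_1>\cdots>y_n>z_1>\cdots>z_p$) and let $<'_{lex}$ be the product order of $\prec_{lex}$ and $<_{lex}$ on $R[\bfT_{\calD}]$. Then the reduced Gr\"obner basis of the Rees ideal $\ker(\psi)$ with respect to $<'_{lex}$ is the union of the reduced Gr\"obner basis of $J_{\calD}$ with respect to $\prec_{lex}$, all of whose elements are quadratic as just noted, together with binomials of the form $x_iT_k-x_jT_l$, each of which is homogeneous of degree two in the standard grading on $R[\bfT_{\calD}]$ assigning every $x_*$, $y_*$, $z_*$ and every $T_{i,j,k}$ degree one. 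Hence $\ker(\psi)$ admits a quadratic Gr\"obner basis in this grading.

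Since $\calR(I_{\calD})\cong R[\bfT_{\calD}]/\ker(\psi)$ is a standard graded $\KK$-algebra whose presentation ideal has a quadratic Gr\"obner basis, it is Koszul by \cite[Theorem 6.7]{MR2850142}, completing the proof. I do not expect a real obstacle here: the theorem is essentially an assembly of \Cref{LEx}, \Cref{ReesIdeal} and \Cref{ToricIsKoszul}. The only points needing a little attention are routine bookkeeping --- checking that the extra binomials $x_iT_k-x_jT_l$ are genuinely quadratic in the grading in which $\calR(I_{\calD})$ is standard graded, and that $\prec_{lex}$ (the order making $J_{\calD}$ quadratic) may legitimately serve as the $\bfT$-component of the product order in \Cref{ReesIdeal} --- both of which are immediate.
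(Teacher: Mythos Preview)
Your proof is correct and follows essentially the same route as the paper: combine \Cref{ToricIsKoszul} (quadratic Gr\"obner basis of $J_{\calD}$), \Cref{LEx} (the $\ell$-exchange property), and \Cref{ReesIdeal} to obtain a quadratic Gr\"obner basis for the Rees ideal and the fiber type conclusion, then invoke \cite[Theorem 6.7]{MR2850142} for Koszulness. The extra bookkeeping you spell out (the grading on $R[\bfT_{\calD}]$ and the choice of $\prec_{lex}$ as the $\bfT$-component) is left implicit in the paper but is exactly what makes the argument go through.
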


\begin{proof}
    By \Cref{ToricIsKoszul}, we know the toric ideal $J_{\calD}$ of $I_{\calD}$
    has a quadratic Gr\"obner basis. Hence by \Cref{LEx} and \Cref{ReesIdeal},
    $I_{\calD}$ is of fiber type and the Rees ideal of $I_{\calD}$ has a
    quadratic Gr\"obner basis. It follows from \cite[Theorem 6.7]{MR2850142}
    that $\calR(I_{\calD})$ is Koszul.
\end{proof}

We close with some questions for future research.

\begin{Question}
    \label{deg3-conj}
    Let $\calD$ be a three-dimensional Ferrers diagram. Is the degree of
    minimal binomial generators of the special fiber ideal at most three?
    Is the special fiber ideal always Cohen--Macaulay or normal? What
    about the Rees algebra?
\end{Question}

\begin{acknowledgment*}
    The authors thank Uwe Nagel and Paolo Mantero for the discussion of the
    early version of this manuscript.  They also thank the anonymous referee
    for the careful reading and helpful suggestions that greatly improved the
    presentation of this paper.  The second author is partially supported by
    the ``Fundamental Research Funds for the Central Universities''.
\end{acknowledgment*}

\begin{bibdiv}
\begin{biblist}

\bib{MR1195407}{article}{
      author={Avramov, Luchezar~L.},
      author={Eisenbud, David},
       title={Regularity of modules over a {K}oszul algebra},
        date={1992},
        ISSN={0021-8693},
     journal={J. Algebra},
      volume={153},
       pages={85\ndash 90},
         url={https://doi.org/10.1016/0021-8693(92)90149-G},
      review={\MR{1195407}},
}

\bib{MR2433002}{article}{
      author={Blasiak, Jonah},
       title={The toric ideal of a graphic matroid is generated by quadrics},
        date={2008},
        ISSN={0209-9683},
     journal={Combinatorica},
      volume={28},
       pages={283\ndash 297},
         url={http://dx.doi.org/10.1007/s00493-008-2256-6},
      review={\MR{2433002}},
}

\bib{MR3155951}{incollection}{
      author={Bruns, Winfried},
       title={The quest for counterexamples in toric geometry},
        date={2013},
   booktitle={Commutative algebra and algebraic geometry ({CAAG}-2010)},
      series={Ramanujan Math. Soc. Lect. Notes Ser.},
      volume={17},
   publisher={Ramanujan Math. Soc., Mysore},
       pages={45\ndash 61},
      review={\MR{3155951}},
}

\bib{MR1251956}{book}{
      author={Bruns, Winfried},
      author={Herzog, J{\"u}rgen},
       title={Cohen-{M}acaulay rings},
      series={Cambridge Studies in Advanced Mathematics},
   publisher={Cambridge University Press},
     address={Cambridge},
        date={1993},
      volume={39},
        ISBN={0-521-41068-1},
      review={\MR{1251956 (95h:13020)}},
}

\bib{MR2172855}{article}{
      author={Bus\'e, Laurent},
      author={Chardin, Marc},
       title={Implicitizing rational hypersurfaces using approximation
  complexes},
        date={2005},
        ISSN={0747-7171},
     journal={J. Symbolic Comput.},
      volume={40},
       pages={1150\ndash 1168},
         url={http://dx.doi.org/10.1016/j.jsc.2004.04.005},
      review={\MR{2172855}},
}

\bib{MR2563140}{article}{
      author={Caviglia, Giulio},
       title={The pinched {V}eronese is {K}oszul},
        date={2009},
        ISSN={0925-9899},
     journal={J. Algebraic Combin.},
      volume={30},
       pages={539\ndash 548},
         url={http://dx.doi.org/10.1007/s10801-009-0176-1},
      review={\MR{2563140}},
}

\bib{MR3003932}{article}{
      author={Caviglia, Giulio},
      author={Conca, Aldo},
       title={Koszul property of projections of the {V}eronese cubic surface},
        date={2013},
        ISSN={0001-8708},
     journal={Adv. Math.},
      volume={234},
       pages={404\ndash 413},
         url={http://dx.doi.org/10.1016/j.aim.2012.11.002},
      review={\MR{3003932}},
}

\bib{arXiv:1310.2496}{incollection}{
      author={Conca, Aldo},
       title={Koszul algebras and their syzygies},
        date={2014},
   booktitle={Combinatorial algebraic geometry},
      series={Lecture Notes in Math.},
      volume={2108},
   publisher={Springer, Cham},
       pages={1\ndash 31},
         url={http://dx.doi.org/10.1007/978-3-319-04870-3_1},
      review={\MR{3329085}},
}

\bib{MR2457403}{article}{
      author={Corso, Alberto},
      author={Nagel, Uwe},
       title={Monomial and toric ideals associated to {F}errers graphs},
        date={2009},
        ISSN={0002-9947},
     journal={Trans. Amer. Math. Soc.},
      volume={361},
       pages={1371\ndash 1395},
         url={http://dx.doi.org/10.1090/S0002-9947-08-04636-9},
      review={\MR{2457403}},
}

\bib{CNPY}{article}{
      author={Corso, Alberto},
      author={Nagel, Uwe},
      author={Petrovi\'c, Sonja},
      author={Yuen, Cornelia},
       title={Blow-up algebras, determinantal ideals, and
  {D}edekind--{M}ertens-like formulas},
        date={2017},
        ISSN={0933-7741},
     journal={Forum Math.},
      volume={29},
       pages={799\ndash 830},
         url={http://dx.doi.org/10.1515/forum-2016-0007},
      review={\MR{3669004}},
}

\bib{MR2810322}{book}{
      author={Cox, David~A.},
      author={Little, John~B.},
      author={Schenck, Henry~K.},
       title={Toric varieties},
      series={Graduate Studies in Mathematics},
   publisher={American Mathematical Society, Providence, RI},
        date={2011},
      volume={124},
        ISBN={978-0-8218-4819-7},
         url={http://dx.doi.org/10.1090/gsm/124},
      review={\MR{2810322}},
}

\bib{arXiv:1706.07462}{article}{
      author={DiPasquale, Michael},
      author={Francisco, Christopher~A.},
      author={Mermin, Jeffrey},
      author={Schweig, Jay},
      author={Sosa, Gabriel},
       title={{The Rees algebra of a two-Borel ideal is Koszul}},
        date={2017},
      eprint={arXiv:1706.07462},
}

\bib{MR2850142}{book}{
      author={Ene, Viviana},
      author={Herzog, J{\"u}rgen},
       title={Gr\"obner bases in commutative algebra},
      series={Graduate Studies in Mathematics},
   publisher={American Mathematical Society, Providence, RI},
        date={2012},
      volume={130},
        ISBN={978-0-8218-7287-1},
      review={\MR{2850142}},
}

\bib{arXiv:1205.3127}{article}{
      author={Fouli, Louiza},
      author={Lin, Kuei-Nuan},
       title={Rees algebras of square-free monomial ideals},
        date={2015},
        ISSN={1939-0807},
     journal={J. Commut. Algebra},
      volume={7},
       pages={25\ndash 54},
         url={http://dx.doi.org/10.1216/JCA-2015-7-1-25},
      review={\MR{3316984}},
}

\bib{M2}{misc}{
      author={Grayson, Daniel~R.},
      author={Stillman, Michael~E.},
       title={Macaulay2, a software system for research in algebraic geometry},
         how={Available at \href{http://www.math.uiuc.edu/Macaulay2/}%
  {http://www.math.uiuc.edu/Macaulay2/}},
        note={Available at \href{http://www.math.uiuc.edu/Macaulay2/}%
  {http://www.math.uiuc.edu/Macaulay2/}},
}

\bib{MR2363237}{book}{
      author={Greuel, Gert-Martin},
      author={Pfister, Gerhard},
       title={A {\sc {s}ingular} introduction to commutative algebra},
     edition={extended},
   publisher={Springer},
     address={Berlin},
        date={2008},
        ISBN={978-3-540-73541-0},
}

\bib{MR1957102}{article}{
      author={Herzog, J{\"u}rgen},
      author={Hibi, Takayuki},
       title={Discrete polymatroids},
        date={2002},
        ISSN={0925-9899},
     journal={J. Algebraic Combin.},
      volume={16},
       pages={239\ndash 268 (2003)},
         url={http://dx.doi.org/10.1023/A:1021852421716},
      review={\MR{1957102 (2004c:52017)}},
}

\bib{MR2724673}{book}{
      author={Herzog, J{\"u}rgen},
      author={Hibi, Takayuki},
       title={Monomial ideals},
      series={Graduate Texts in Mathematics},
   publisher={Springer-Verlag London Ltd.},
     address={London},
        date={2011},
      volume={260},
        ISBN={978-0-85729-105-9},
      review={\MR{2724673}},
}

\bib{MR2195995}{article}{
      author={Herzog, J{\"u}rgen},
      author={Hibi, Takayuki},
      author={Vladoiu, Marius},
       title={Ideals of fiber type and polymatroids},
        date={2005},
        ISSN={0030-6126},
     journal={Osaka J. Math.},
      volume={42},
       pages={807\ndash 829},
         url={http://projecteuclid.org/euclid.ojm/1153494553},
      review={\MR{2195995 (2007c:05045)}},
}

\bib{MR3697146}{article}{
      author={Kustin, Andrew},
      author={Polini, Claudia},
      author={Ulrich, Bernd},
       title={The equations defining blowup algebras of height three
  {G}orenstein ideals},
        date={2017},
        ISSN={1937-0652},
     journal={Algebra Number Theory},
      volume={11},
       pages={1489\ndash 1525},
         url={https://doi.org/10.2140/ant.2017.11.1489},
      review={\MR{3697146}},
}

\bib{MR1657721}{article}{
      author={Ohsugi, Hidefumi},
      author={Hibi, Takayuki},
       title={Koszul bipartite graphs},
        date={1999},
        ISSN={0196-8858},
     journal={Adv. in Appl. Math.},
      volume={22},
       pages={25\ndash 28},
         url={https://doi.org/10.1006/aama.1998.0615},
      review={\MR{1657721}},
}

\bib{MR1705794}{article}{
      author={Ohsugi, Hidefumi},
      author={Hibi, Takayuki},
       title={Toric ideals generated by quadratic binomials},
        date={1999},
        ISSN={0021-8693},
     journal={J. Algebra},
      volume={218},
       pages={509\ndash 527},
         url={http://dx.doi.org/10.1006/jabr.1999.7918},
      review={\MR{1705794}},
}

\bib{MR2560561}{book}{
      author={Peeva, Irena},
       title={Graded syzygies},
      series={Algebra and Applications},
   publisher={Springer-Verlag London Ltd.},
     address={London},
        date={2011},
      volume={14},
        ISBN={978-0-85729-176-9},
         url={http://dx.doi.org/10.1007/978-0-85729-177-6},
      review={\MR{2560561 (2011j:13015)}},
}

\bib{MR3144398}{article}{
      author={Petrovi\'c, Sonja},
      author={Stasi, Despina},
       title={Toric algebra of hypergraphs},
        date={2014},
        ISSN={0925-9899},
     journal={J. Algebraic Combin.},
      volume={39},
       pages={187\ndash 208},
         url={http://dx.doi.org/10.1007/s10801-013-0444-y},
      review={\MR{3144398}},
}

\bib{arXiv:1711.04354}{article}{
      author={{Petrovi{\'c}}, Sonja},
      author={Thoma, Apostolos},
      author={Vladoiu, Marius},
       title={{Hypergraph encodings of arbitrary toric ideals}},
        date={2017},
      eprint={arXiv:1711.04354},
}

\bib{MR0593648}{article}{
      author={Provan, J.~Scott},
      author={Billera, Louis~J.},
       title={Decompositions of simplicial complexes related to diameters of
  convex polyhedra},
        date={1980},
        ISSN={0364-765X},
     journal={Math. Oper. Res.},
      volume={5},
       pages={576\ndash 594},
      review={\MR{593648 (82c:52010)}},
}

\bib{MR3069289}{article}{
      author={Shibuta, Takafumi},
       title={Toric ideals for high {V}eronese subrings of toric algebras},
        date={2011},
        ISSN={0019-2082},
     journal={Illinois J. Math.},
      volume={55},
      number={3},
       pages={895\ndash 905 (2013)},
         url={http://projecteuclid.org/euclid.ijm/1369841790},
      review={\MR{3069289}},
}

\bib{MR1283294}{article}{
      author={Simis, Aron},
      author={Vasconcelos, Wolmer~V.},
      author={Villarreal, Rafael~H.},
       title={On the ideal theory of graphs},
        date={1994},
        ISSN={0021-8693},
     journal={J. Algebra},
      volume={167},
       pages={389\ndash 416},
}

\bib{MR2039975}{incollection}{
      author={Sottile, Frank},
       title={Toric ideals, real toric varieties, and the moment map},
        date={2003},
   booktitle={Topics in algebraic geometry and geometric modeling},
      series={Contemp. Math.},
      volume={334},
   publisher={Amer. Math. Soc., Providence, RI},
       pages={225\ndash 240},
         url={http://dx.doi.org/10.1090/conm/334/05984},
      review={\MR{2039975}},
}

\bib{MR1363949}{book}{
      author={Sturmfels, Bernd},
       title={Gr\"obner bases and convex polytopes},
      series={University Lecture Series},
   publisher={American Mathematical Society},
     address={Providence, RI},
        date={1996},
      volume={8},
        ISBN={0-8218-0487-1},
      review={\MR{MR1363949 (97b:13034)}},
}

\bib{MR1492542}{incollection}{
      author={Sturmfels, Bernd},
       title={Equations defining toric varieties},
        date={1997},
   booktitle={Algebraic geometry---{S}anta {C}ruz 1995},
      series={Proc. Sympos. Pure Math.},
      volume={62},
   publisher={Amer. Math. Soc., Providence, RI},
       pages={437\ndash 449},
      review={\MR{1492542}},
}

\bib{MR2492476}{article}{
      author={Sturmfels, Bernd},
      author={Sullivant, Seth},
       title={Toric geometry of cuts and splits},
        date={2008},
        ISSN={0026-2285},
     journal={Michigan Math. J.},
      volume={57},
       pages={689\ndash 709},
         url={http://dx.doi.org/10.1307/mmj/1220879432},
        note={Special volume in honor of Melvin Hochster},
      review={\MR{2492476}},
}

\bib{MR2611561}{book}{
      author={Taylor, Diana~Kahn},
       title={Ideals generated by monomials in an {R}-sequence},
   publisher={ProQuest LLC, Ann Arbor, MI},
        date={1966},
  url={http://gateway.proquest.com/openurl?url_ver=Z39.88-2004&rft_val_fmt=info:ofi/fmt:kev:mtx:dissertation&res_dat=xri:pqdiss&rft_dat=xri:pqdiss:T-13006},
        note={Thesis (Ph.D.)--The University of Chicago},
      review={\MR{2611561}},
}

\bib{MR1335312}{article}{
      author={Villarreal, Rafael~H.},
       title={Rees algebras of edge ideals},
        date={1995},
        ISSN={0092-7872},
     journal={Comm. Algebra},
      volume={23},
       pages={3513\ndash 3524},
      review={\MR{1335312 (96e:13005)}},
}

\end{biblist}
\end{bibdiv}
\end{document}